\documentclass{amsart}
\usepackage{amssymb}
\usepackage{amsmath}
\usepackage{amsthm}
\usepackage{paralist}
\usepackage[misc]{ifsym}
\usepackage{epsfig} 
\usepackage{epstopdf} 
\usepackage[hidelinks]{hyperref}
\allowdisplaybreaks
\usepackage[all,cmtip]{xy}
\usepackage{xcolor}
\usepackage{mathrsfs}
\usepackage{comment}

\newtheorem{theorem}{Theorem}[section]
\newtheorem{corollary}[theorem]{Corollary}

\newtheorem{lemma}[theorem]{Lemma}
\newtheorem{proposition}[theorem]{Proposition}

\theoremstyle{definition}
\newtheorem{definition}[theorem]{Definition}
\newtheorem{remark}[theorem]{Remark}

\newcommand{\lra}{\longrightarrow}
\newcommand{\rr}{\mathbb{R}}
\newcommand{\ff}{\mathbb F}
\newcommand{\frakA}{\mathfrak A}
\newcommand{\frakg}{\mathfrak g}

\newcommand{\frakU}{\mathfrak U}

\newcommand{\Lie}{\operatorname{Lie}}

\newcommand{\Hor}{\operatorname{Hor}}
\newcommand{\calA}{\mathcal A}

\newcommand{\calD}{\mathcal D}

\newcommand{\calJ}{\mathcal J}

\newcommand{\calP}{\mathcal P}
\newcommand{\calV}{\mathcal V}

\newcommand{\subs}{\subseteq}
\newcommand{\pr}{\operatorname{pr}}
\newcommand{\Lra}{\Longrightarrow}

\newcommand{\id}{\operatorname{id}}

\newcommand{\ti}[1]{\widetilde{{#1}}}
\newcommand{\baseRing}[1]{\ensuremath{\mathbb{#1}}}
\newcommand{\R}{\baseRing{R}}

\newcommand{\jdef}[1]{\emph{#1}}

\newcommand{\CD}{\ensuremath{{\calA_d}}}

\newcommand{\HLc}[1]{\ensuremath{{h^{{#1}}}}}
\newcommand{\SG}{\ensuremath{G}}

\usepackage[
backend=biber,
style=alphabetic,
sorting=nyt,
uniquename=false,
giveninits=true,
doi=false,url=false
]{biblatex}
\renewbibmacro{in:}{} 
\begin{document}

\title[Lagrangian reduction of symmetric discrete mechanical systems]{Lagrangian reduction of symmetric discrete mechanical systems: a survey}

\author{Mat\'ias I. Caruso}
\address{\textnormal{(M. I. Caruso)} Depto. de Matem\'atica, Instituto Balseiro \\ Universidad Nacional de Cuyo - C.N.E.A. \\ Av. Bustillo 9500 \\ San Carlos de Bariloche \\ R8402AGP \\ Argentina \hfill \break
	\indent CONICET}
\email{matias.caruso@ib.edu.ar}

\author{Javier Fern\'andez}
\address{\textnormal{(J. Fern\'andez)} Depto. de Matem\'atica, Instituto Balseiro \\ Universidad Nacional de Cuyo - C.N.E.A. \\ Av. Bustillo 9500 \\ San Carlos de Bariloche \\ R8402AGP \\ Argentina}
\email{jfernand@ib.edu.ar}

\author{Cora Tori}
\address{\textnormal{(C. Tori)} Depto. de Ciencias B\'asicas \\ Facultad de Ingenier\'ia \\ Universidad Nacional de La Plata.
	Calle 116 entre 47 y 48 \\ La Plata \\ Buenos Aires \\ 1900 \\ Argentina \hfill \break
	\indent Centro de Matem\'atica de La Plata (CMaLP)}
\email{cora.tori@ing.unlp.edu.ar}

\author{Marcela Zuccalli}
\address{\textnormal{(M. Zuccalli)} Depto. de Matem\'atica \\ Facultad de Ciencias Exactas \\ Universidad Nacional de La Plata.
	Calles 50 y 115 \\ La Plata \\ Buenos Aires \\ 1900 \\ Argentina \hfill \break
	\indent Centro de Matem\'atica de La Plata (CMaLP)}
\email{marce@mate.unlp.edu.ar}

\subjclass{Primary: 37J06, 70H33; Secondary: 70G75.}
\keywords{Geometric mechanics, discrete mechanical systems, discrete reduction.}

\begin{abstract}
In this note we survey some of our results on the Lagrangian reduction of discrete-time mechanical systems (DMSs). It is intended as an introduction to the general ideas that we used in the reduction of DMSs with nonholonomic constraints, DMSs with external forcing, as well as a theory of reduction by stages for such systems. This line of work was inspired by the paper and the monograph written by H. Cendra, J. Marsden and T. Ratiu in 2001.
\end{abstract}

\maketitle

\section{Introduction}

In 2001 H. Cendra, J. Marsden and T. Ratiu published a paper and a
monograph~\cite{ar:cendra_marsden_ratiu:2001:geometric_mechanics_lagrangian_reduction_and_nonholonomic_systems,ar:cendra_marsden_ratiu:2001:lagrangian_reduction_by_stages}
discussing several aspects of the Lagrangian reduction of mechanical
systems that, first, survey the current state of affairs and, then,
extend the techniques to other areas, like nonholonomic mechanical
systems and reduction by stages. The purpose of this paper, inspired
by the approach of that work, is to survey some of our work that
explores the same areas but for discrete-time mechanical systems. It
is our humble tribute to our teacher, colleague and friend, Hern\'an
Cendra.

The study of symmetries is, arguably, as old as humankind. In
Classical Mechanics, the work of historic figures like Euler, Jacobi,
Lagrange, Hamilton, Routh and Poincar\'e, among others, has explored
the use of symmetries to simplify the description of a given system by
eliminating redundant degrees of freedom: this usually has the benefit
of leading to a system that is easier to describe ---the reduced
system--- and, also, pointing towards a more essential part of the
dynamics. As we know, Classical Mechanics can be described in
different, essentially equivalent, formalisms: Lagrangian, Hamiltonian
and, more recently, Dirac. Depending on the tools used to determine
the dynamics of the system, we talk, for example, about variational
principles or symplectic
formalism. In~\cite{ar:cendra_marsden_ratiu:2001:geometric_mechanics_lagrangian_reduction_and_nonholonomic_systems}
and~\cite{ar:cendra_marsden_ratiu:2001:lagrangian_reduction_by_stages},
they focus on the Lagrangian--variational description, and so shall we
in what follows, except that our time variable will be discrete.

Discrete mechanical systems have been considered for a number of
decades already and one of the main thrusts for their study is that
they can be used to derive numerical integrators for their
continuous-time counterparts. Also, the study of the dynamical
properties of the discrete systems immediately gives information about
the integrator and, when available, conservation properties of the
discrete systems impose strict limitations on the integrator that, in
many cases, lead to well behaved algorithms, especially when used for
long time scales.

Still, our purpose is not to study the relation between the continuous
and discrete systems from the perspective of the numerical integrators
---i.e., the error analysis--- but, rather, the reduction process of
discrete systems per se. The basic setting, discussed in Section 2, is
as follows: we are given a discrete mechanical system $(Q,L_d)$ whose
trajectories $q_\cdot = (q_0,\ldots,q_N) \in Q^{N+1}$ are determined
by a criticality condition of a functional $S_d$ under certain
infinitesimal variations $\delta q_\cdot$. This criticality condition
can be equivalently described via the (discrete) equations of motion,
that are algebraic rather than differential. When a Lie group $G$ acts
on the left on $Q$ in such a way that $\pi:Q\rightarrow Q/G$ is a
principal $G$-bundle and (the tangent lift of) this action leaves
$L_d$ invariant, we say that $G$ is a symmetry group of $(Q,L_d)$. We
can see that the criticality condition on $q_\cdot$ leads to another
criticality condition on discrete paths $(v_\cdot,\tau_\cdot)$ in the
reduced space $\ti{G} := (Q\times\SG)/G$, subject to an appropriate
set of infinitesimal variations $(\delta v_\cdot, \delta
\tau_\cdot)$. The main result is that the two criticality conditions
are, in fact, equivalent. Also, each such condition is described in
terms of some equations of motion, leading to what are known as ``$4$
point theorems''. This analysis goes full circle by a reconstruction
result that states that all reduced paths $(v_\cdot,\tau_\cdot)$ can
be lifted to paths $q_\cdot$ in such a way that the former are
critical if and only if the latter are critical. An interesting
feature of the equations of motion of the reduced system is that,
using a principal connection on $\pi$, they split into two sets: the
horizontal and the vertical equations. The second set of equations is
intimately related to the momentum conservation of the system on $Q$
due to the $G$ symmetry ---a discrete version of Noether's Theorem.
As an application of these ideas, we discuss the discrete version of
Routh reduction that was first presented
by~\cite{ar:jalnapurkar_leok_marsden_west:2006:discrete_routh_reduction} in
an ad-hoc manner and, recently,
in~\cite{ar:caruso_fernandez_tori_zuccalli:2026:remarks_on_structures_and_preservation_in_forced_discrete_mechanical_systems_of_Routh_type},
as part of this general Lagrangian reduction procedure for discrete
mechanical systems.

The construction described so far can be extended in various
directions, always keeping the same basic structure, but with
additional data, according to each setting. In Section 3 we discuss
the reduction theory in the case of discrete mechanical systems that
are subjected to external forces. This subject was treated
in~\cite{ar:caruso_fernandez_tori_zuccalli:2023:lagrangian_reduction_of_forced_discrete_mechanical_systems}
where we proved a $4$ point theorem and a reconstruction theorem. In
addition, we saw that in some particular cases, the reduced system
preserves a certain non-canonical symplectic structure.

Another extension of the basic discrete reduction process is to
consider discrete mechanical systems that are subject to nonholonomic
constraints. The discrete systems that we consider have variational
and kinematic constraints that are not related; thus, they are the
discrete analogue of the generalized nonholonomic systems considered
in~\cite{ar:cendra_grillo:2006:generalized_nonholonomic_mechanics_servomechanisms_and_related_brackets}. This
reduction case was considered
in~\cite{ar:fernandez_tori_zuccalli:2010:lagrangian_reduction_of_discrete_mechanical_systems}
and we will revisit it in Section $4$. Once again, we obtain a $4$
point theorem as well as a reconstruction result. There are many other
features in this setting that clearly parallel similar properties of
the continuous systems.

A not so desirable characteristic of the discrete reduction processes
described so far is that even though the starting data corresponds to
a discrete mechanical system, the discrete reduced system produced is
not a mechanical system but, rather, a more general dynamical
system. This phenomenon afflicts the continuous systems too and has
been extensively treated
in~\cite{ar:cendra_marsden_ratiu:2001:lagrangian_reduction_by_stages}: the
idea is to define a class of dynamical systems that is larger than
that of (continuous) mechanical systems and that also contains the
systems obtained by reduction of these systems. Also, a general
reduction process is defined for the larger class of systems (that
restricts to the already known reduction for the mechanical
ones). Then they prove that, under certain conditions, the reduction
of a system by a symmetry group $G$ can be realized by stages, that
is, if $H\subset G$ is a normal subgroup, then one can reduce the
system, first by $H$ and, then, by the remaining symmetries; the
alternative, full reduction by $G$ leads to a reduced system that is
equivalent to the one obtained reducing by stages. We explored
in~\cite{ar:fernandez_tori_zuccalli:2016:lagrangian_reduction_of_discrete_mechanical_systems_by_stages}
and~\cite{ar:fernandez_tori_zuccalli:2020:lagrangian_reduction_of_nonholonomic_discrete_mechanical_systems_by_stages}
how the same problem and solution happens in the discrete case. In
Section 5 we briefly touch on some of the main ideas in this very
technical subject.

A technical point here is that the passage from the description of the
system on $Q$ to a system on $\ti{G}$ requires the usage of a
geometric construction called affine discrete connection; these
connections as well as the corresponding isomorphisms will be
described in Section 2.5. In the continuous case, this is achieved
using a principal connection on $\pi$. We should warn the reader that
in this paper we are strengthening the original definition of affine
discrete connection given
in~\cite{ar:fernandez_tori_zuccalli:2010:lagrangian_reduction_of_discrete_mechanical_systems}
to make it compatible with other, later, studies on the subject of
discrete connections on a principal bundle
(see~\cite{ar:fernandez_zuccalli:2013:a_geometric_approach_to_discrete_connections_on_principal_bundles}).

As we said earlier, this paper discusses our variational approach to
the reduction of discrete mechanical systems. Important alternative
treatments that we don't cover here include the elegant and powerful
groupoid approach
(see~\cite{ar:marrero_martin_martinez-discrete_lagrangian_and_hamiltonian_mechanics_on_lie_groupoids,ar:marrero_martin_martinez-discrete_lagrangian_and_hamiltonian_mechanics_on_lie_groupoids-corrigendum})
and the more recent one for discrete Dirac systems
(see~\cite{ar:rodriguezAbella_leok-discrete_dirac_reduction_of_implicit_lagrangian_systems_with_abelian_symmetry_groups}).

\section{Reduction of symmetric discrete mechanical systems}

In this section we consider discrete mechanical systems with symmetries and a process of reduction using the notion of affine discrete connection. An interesting particular case of these ideas is the so-called discrete Routh reduction.

\subsection{Some preliminaries on product manifolds}\label{subsection:preliminaries_on_product_manifolds}

In order to describe with precision the elements that appear when dealing with discrete mechanical systems, we begin with a few general remarks on product manifolds.

Given a smooth manifold $Q$, consider the canonical projections onto the first and second factor $\pr_i : Q \times Q \lra Q$, $i = 1,2$. Using the product structure of the manifold $Q \times Q$, we have the following identification for its tangent bundle:
\[
T(Q \times Q) \simeq  \pr_1^{\ast} (TQ) \oplus \pr_{2}^{\ast} (TQ),
\]
where $\pr_{i}^{\ast} (TQ)$ denotes the pullback of the tangent bundle $\tau_{Q}: TQ \lra Q$ over $Q \times Q$ by $\pr_i$, for $i=1,2$. If we define $j_1 : \pr_{1}^{\ast} (TQ) \lra T(Q \times Q)$ by $j_1(\delta q) := (\delta q,0)$, we obtain an isomorphism of vector bundles
\[
\pr_{1}^{\ast}  (TQ) \simeq TQ^{-} := \ker(T \pr_2) \subs T(Q \times Q),
\]
where $T\pr_2 : T(Q \times Q) \lra TQ$ is the tangent map of $\pr_2$.

Similarly, defining $j_2 : p_{2}^{*}  (TQ) \lra T(Q \times Q)$ by $j_2(\delta q) := (0,\delta q)$, we obtain
\[
\pr_{2}^{*} (TQ) \simeq TQ^{+} := \ker(T \pr_1) \subs T(Q \times Q).
\]

Thus, we have the decomposition $\displaystyle{T(Q \times  Q) = TQ^{-} \oplus TQ^{+}}$ that, in turn, yields the decomposition
$\displaystyle{T^{\ast}(Q \times Q) = (TQ^{-})^{\circ} \oplus (TQ^{+})^{\circ}}$ and the natural identifications
$(TQ^{+})^{\circ} \simeq (TQ^{-})^{*} \simeq \pr_1^{*}(T^{*}Q)$ and 
$(TQ^{-})^{\circ} \simeq (TQ^{+})^{*} \simeq \pr_2^{*}(T^{*}Q)$, where $\pr_i^{*}(T^{*}Q)$ is the pullback bundle of the cotangent bundle $\pi_{Q}:T^*Q \lra Q$ over $Q\times Q$ by $\pr_i$, with $i=1,2$ and the superscript $\circ$ denotes the corresponding annihilator.

For every smooth function $\displaystyle{f : Q \times Q \lra X}$, where $X$ is a smooth manifold, we define $D_{1}f := Tf \circ j_1$ and $D_{2}f := Tf \circ j_2$, where $Tf:T(Q \times Q) \lra TX$ denotes the tangent map of $f$. Hence,
\[
Tf(q_0,q_1)(\delta q_0,\delta q_1) = D_{1}f(q_0,q_1)(\delta q_0) + D_{2}f(q_0,q_1)(\delta q_1).
\]

If $f : Q \times Q \lra \rr$, then $D_{1}f(q_0,q_1) \in T^{*}_{q_0}Q$ and $\displaystyle{D_{2}f(q_0,q_1) \in T^{*}_{q_1}Q}$.

\subsection{Discrete mechanical systems (DMS) and their dynamics}

We briefly recall the notion of discrete mechanical system and its dynamics.

\begin{definition}\label{def:dms}
A \emph{discrete mechanical system} (DMS) consists of a pair $(Q,L_d)$, where $Q$ is a smooth manifold of dimension $n$, the \emph{configuration space}, and $L_d : Q \times Q \lra \rr$ is a smooth function, the \emph{discrete Lagrangian}.
\end{definition}

\begin{definition}
A \emph{discrete curve} on a smooth manifold $Q$ is a smooth function $q. : \{ 0,\ldots,N \} \lra Q$ that is usually identified with its image $(q_0,q_1,...,q_N)$, which is an element of the product manifold $Q^{N+1}$, where $q_k := q_{.}(k)$, for all $k\in \{ 0,\ldots,N \}$.
\end{definition}

Given the space of discrete curves $\mathcal{C}_{d}\left( Q \right) := \left\{ q.:\left\{ 0,\ldots,N \right\} \lra Q\right\} $, its tangent space at $q.$ is described as
\[
T_{q.}\left( \mathcal{C}_{d}\left( Q\right)\right) =\left\{
v_{q.}:\left\{ 0,\ldots,N \right\} \lra TQ  \ \mbox{ such that} \ \tau_{Q}\circ v_{q.}=q.\right\}.
\]

\begin{definition}
An \emph{infinitesimal variation} of a discrete curve $q.$ is a function $\delta q. : \{ 0,\ldots,N \} \lra TQ$ such that $\delta q_k \in T_{q_k}Q \ \forall\, k=0,...,N $. It is said that an infinitesimal variation has \emph{fixed endpoints} if $\delta q_0 = 0$ and $\delta q_N = 0$.
\end{definition}

\begin{definition}
The \emph{discrete action} of the DMS $(Q,L_d)$ is the smooth function $S_d:\mathcal{C}_{d}\left( Q \right) \lra \rr$ given by
\[
S_d(q.) := \sum_{k=0}^{N-1} L_d(q_k,q_{k+1}).
\]
\end{definition}

The dynamics of a DMS is determined by the discrete Hamilton principle, that establishes that the trajectories of a DMS are the extremal curves of $S_d$, considering infinitesimal variations with fixed endpoints.

\begin{definition}
A discrete curve $q.$ is a \emph{trajectory} of the DMS $(Q,L_d)$ if it satisfies
$\displaystyle{dS_d(q.)(\delta q.)= 0}$
for all infinitesimal variations $\delta q.$ of $q.$ with fixed endpoints.
\end{definition}

It is well--known (see \cite[Section 1.3.1]{ar:marsden_west:2001:discrete_mechanics_and_variational_integrators}) that the trajectories of a DMS may be characterized as the solution of a set of algebraic equations.

\begin{theorem}\label{DEE-L}
A discrete curve $q.$ is a trajectory of the DMS $(Q,L_d)$ if and only if $q.$ satisfies
\begin{equation}\label{dELe}
D_2 L_d(q_{k-1},q_k) + D_1 L_d(q_k,q_{k+1})  = 0 \in T_{q_k}^{*}Q \ \text{for all } k=1,\ldots,N-1,
\end{equation}
which are known as \emph{discrete Euler--Lagrange equations}.
\end{theorem}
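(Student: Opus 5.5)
The plan is to compute $dS_d(q.)(\delta q.)$ explicitly using the product-manifold calculus of Section~\ref{subsection:preliminaries_on_product_manifolds}, then regroup the sum by the index of the variation. Since $S_d(q.)=\sum_{k=0}^{N-1} L_d(q_k,q_{k+1})$ and each term depends only on $(q_k,q_{k+1})$, the chain rule together with the identity $Tf(q_0,q_1)(\delta q_0,\delta q_1) = D_1 f(q_0,q_1)(\delta q_0)+D_2 f(q_0,q_1)(\delta q_1)$ gives
\[
dS_d(q.)(\delta q.) = \sum_{k=0}^{N-1}\Bigl( D_1L_d(q_k,q_{k+1})(\delta q_k) + D_2L_d(q_k,q_{k+1})(\delta q_{k+1})\Bigr).
\]
Shifting the index in the second group of terms ($k+1\mapsto k$), this becomes
\[
D_1L_d(q_0,q_1)(\delta q_0) + \sum_{k=1}^{N-1}\bigl(D_2L_d(q_{k-1},q_k)+D_1L_d(q_k,q_{k+1})\bigr)(\delta q_k) + D_2L_d(q_{N-1},q_N)(\delta q_N).
\]
The boundary terms vanish when the endpoints are fixed, since then $\delta q_0=0$ and $\delta q_N=0$.

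For the implication from the discrete Euler--Lagrange equations to trajectories: if \eqref{dELe} holds, every interior term in the sum is zero. Hence $dS_d(q.)(\delta q.)=0$ for every variation with fixed endpoints.

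For the converse, assume $q.$ is a trajectory and fix $k\in\{1,\dots,N-1\}$ and an arbitrary $v\in T_{q_k}Q$. Define the variation $\delta q.$ by $\delta q_k=v$ and $\delta q_j=0\in T_{q_j}Q$ for $j\neq k$. Because $1\le k\le N-1$, this variation has fixed endpoints. The formula above then reduces to $\bigl(D_2L_d(q_{k-1},q_k)+D_1L_d(q_k,q_{k+1})\bigr)(v)=0$. Since $v$ is arbitrary, the covector in $T^*_{q_k}Q$ vanishes, which is \eqref{dELe}.

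The only step needing care is the first: justifying that $dS_d(q.)$ acts on $T_{q.}\mathcal{C}_d(Q)\simeq \bigoplus_k T_{q_k}Q$ componentwise via $D_1$ and $D_2$. This follows from $\mathcal{C}_d(Q)\simeq Q^{N+1}$ and the fact that $(q_0,\dots,q_N)\mapsto (q_k,q_{k+1})$ is a projection whose tangent map picks out $(\delta q_k,\delta q_{k+1})$. Combined with the decomposition $T(Q\times Q)=TQ^-\oplus TQ^+$, this yields the formula. I expect no real obstacle beyond keeping the reindexing correct.
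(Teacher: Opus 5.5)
Your proof is correct and is the standard argument. The paper does not prove this statement itself but cites Marsden--West, whose proof is the same discrete summation by parts: the fixed endpoints kill the boundary terms, and variations supported at a single interior index isolate each covector $D_2 L_d(q_{k-1},q_k)+D_1 L_d(q_k,q_{k+1})$.
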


\subsection{Discrete Legendre transforms and regularity}

Just like their continuous counterparts, the equations of motion of a DMS may admit no solution. We now give certain conditions that guarantee the existence of solutions.
 
\begin{definition}
Given a DMS $(Q,L_d)$, the maps 
$\ff^+L_{d}, \ff^-L_{d} : Q \times Q \lra T^*Q$ defined by
\[
\ff^+L_{d}(q_0,q_1) := D_2 L_d(q_0,q_1) \ \ \ \mbox{and}\ \ \ \ff^-L_{d}(q_0,q_1) := -D_1 L_d(q_0,q_1)
\]
are called \emph{discrete Legendre transforms}.
\end{definition}

\begin{remark}\label{preservation}
The maps $\ff^+L_{d}$ and $\ff^-L_{d}$ preserve the base points of the fiber bundles $\pr_{i}:Q\times Q \lra Q$ and $\pi_{Q}:T^*Q \lra Q$ with $i=1,2$, respectively. That is, 
$\ff^-L_{d}(q_0,q_1) \in T^{*}_{q_0}Q$ and $\ff^+L_{d}(q_0,q_1) \in T^{*}_{q_1}Q$. Hence, they define sections of the fiber bundles $\pr_{i}^{*} (T^{*}Q)$, with $i=1,2$; that is, $\ff^-L_{d} \in \Gamma(Q \times Q, \pr_{1}^{*} (T^{*}Q))$ and $\ff^+L_{d} \in \Gamma(Q \times Q, \pr_{2}^{*} (T^{*}Q)).$
\end{remark} 
 
\begin{remark}
It is worth noting that the discrete Euler--Lagrange equations \eqref{dELe} are equivalent to
$\displaystyle{\ff^+L_{d}(q_{k-1},q_{k}) =\ff^-L_{d}(q_{k},q_{k+1})}$ for all $k=1,...,N-1$.
\end{remark}

\begin{definition}
A DMS $(Q,L_d)$ is said to be \emph{regular} if $\ff^+L_{d}$ and $\ff^-L_{d}$ are local isomorphisms of fiber bundles or, equivalently, if they are local diffeomorphisms. That is, if the maps $\displaystyle{\phi_{q_1}:Q\lra T^{*}_{q_1}Q}$ and $\displaystyle{\phi_{q_0}:Q\lra T^{*}_{q_0}Q}$, given by
\[
\begin{split}
\phi_{q_1}(q_0) &:= \ff^+L_{d}(q_0,q_1)=D_2 L_d(q_0,q_1), \\
\phi_{q_0}(q_1) &:= \ff^-L_{d}(q_0,q_1)=-D_1 L_d(q_0,q_1)
\end{split}
\]
are local diffeomorphisms for every $q_0, q_1 \in Q$.

In some special cases, it may occur that both discrete Legrendre transforms are global diffeomorphisms. In this case, $(Q,L_d)$ is said to be \emph{hyperregular}.
\end{definition}

As mentioned before, the flow of a DMS is not always well defined, but the regularity of the system (together with a given trajectory) can be used to guarantee its existence (locally, at least). To state this more precisely, we prove the following result, noting that it is enough to consider curves of length $N=2$ (see the result in \cite{ar:borda_fernandez_grillo:2013:discrete_second_order_constrained_lagrangian_systems_first_results} mentioned in the following section, considering that the forces are zero).

\begin{theorem}\label{equations}
Let $(Q,L_d)$ be a regular DMS. Given a trajectory $(q_0,q_1,q_2)$ of $(Q,L_d)$, there exist open subsets $U, V \subs Q \times Q$ and a diffeomorphism ${\bf F}_{L_d} : U \lra V$ such that
\begin{enumerate}
\item $(q_0,q_1) \in U, (q_1,q_2) \in V$ and ${\bf F}_{L_d}(q_0, q_1) = (q_1,q_2)$.
\item For every $(\tilde{q_0},\tilde{q_1}) \in U$ if $\tilde{q_2} := \pr_2({\bf F}_{L_d}(q_0, q_1))$, then $(\tilde{q_0}, \tilde{q_1},  \tilde{q_2})$ is a trajectory of $(Q,L_d)$.
\item Every trajectory $(q'_0, q'_1, q'_2 )$ of $(Q,L_d)$ such that $(q'_0, q'_1) \in U$ and $(q'_1, q'_2) \in V$ satisfies
$(q'_1, q'_2) = {\bf F}_{L_d}(q'_0, q'_1)$.
\end{enumerate}
\end{theorem}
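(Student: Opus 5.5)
The plan is to realize ${\bf F}_{L_d}$ as a composition of discrete Legendre transforms, using the equivalent form of the discrete Euler--Lagrange equations: $(\tilde q_0,\tilde q_1,\tilde q_2)$ is a trajectory if and only if $\ff^+L_d(\tilde q_0,\tilde q_1)=\ff^-L_d(\tilde q_1,\tilde q_2)$. This is Theorem~\ref{DEE-L} with $N=2$. Formally, one wants ${\bf F}_{L_d}=(\ff^-L_d)^{-1}\circ\ff^+L_d$, and the work is to make this inverse precise locally.

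\emph{Step 1 (local inverse of $\ff^-L_d$).} By regularity, $\ff^-L_d:Q\times Q\lra T^*Q$ is a local diffeomorphism. Both spaces have dimension $2n$. The fiberwise statement that each $\phi_{q_0}$ is a local diffeomorphism, combined with the fact that $\ff^-L_d$ covers $\pr_1$ (Remark~\ref{preservation}), shows that $T\ff^-L_d$ is invertible. Indeed, in the base-preserving block form its matrix is block triangular, with the identity and $D\phi_{q_0}$ on the diagonal. The inverse function theorem then gives an open set $V\ni(q_1,q_2)$ such that $\ff^-L_d|_V:V\lra W$ is a diffeomorphism onto an open set $W\subs T^*Q$ containing $\ff^-L_d(q_1,q_2)$.

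\emph{Step 2 (definition of ${\bf F}_{L_d}$).} Because $(q_0,q_1,q_2)$ is a trajectory, $\ff^+L_d(q_0,q_1)=\ff^-L_d(q_1,q_2)\in W$. Again by regularity and the inverse function theorem, shrink to an open set $U\ni(q_0,q_1)$ on which $\ff^+L_d$ is a diffeomorphism onto an open subset of $W$. Define ${\bf F}_{L_d}:=(\ff^-L_d|_V)^{-1}\circ\ff^+L_d|_U$, and replace $V$ by ${\bf F}_{L_d}(U)$, which is open. Then ${\bf F}_{L_d}:U\lra V$ is a diffeomorphism.

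\emph{Step 3 (base points).} We must check that ${\bf F}_{L_d}(\tilde q_0,\tilde q_1)$ has first component $\tilde q_1$. Write $(a,b)={\bf F}_{L_d}(\tilde q_0,\tilde q_1)$. By Remark~\ref{preservation}, $\ff^-L_d(a,b)\in T^*_aQ$ and $\ff^+L_d(\tilde q_0,\tilde q_1)\in T^*_{\tilde q_1}Q$. Since these covectors are equal, $a=\tilde q_1$. Conclusion~(1) follows, because ${\bf F}_{L_d}(q_0,q_1)$ is the unique preimage of $\ff^+L_d(q_0,q_1)$ in $V$, namely $(q_1,q_2)$.

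\emph{Step 4 (conclusions (2) and (3)).} For (2), the identity $\ff^-L_d(\tilde q_1,\tilde q_2)=\ff^+L_d(\tilde q_0,\tilde q_1)$ holds by construction, which is exactly the discrete Euler--Lagrange equation. (We read the statement as intending $\tilde q_2:=\pr_2({\bf F}_{L_d}(\tilde q_0,\tilde q_1))$.) For (3), if $(q_0',q_1',q_2')$ is a trajectory with $(q_0',q_1')\in U$ and $(q_1',q_2')\in V$, then $\ff^-L_d(q_1',q_2')=\ff^+L_d(q_0',q_1')$. Injectivity of $\ff^-L_d$ on $V$ then forces $(q_1',q_2')={\bf F}_{L_d}(q_0',q_1')$.

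The main obstacle is Step 1: deducing from the fiberwise hypothesis on $\phi_{q_0},\phi_{q_1}$ that the full maps are local diffeomorphisms of total spaces. The definition in the paper asserts this equivalence, but if needed I would verify it with the block-triangular computation in local trivializations. The rest is bookkeeping with the inverse function theorem and the choice of neighborhoods.
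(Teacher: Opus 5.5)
Your argument is correct and complete. The block-triangular check shows that fiberwise regularity makes $\ff^{\pm}L_d$ local diffeomorphisms of total spaces. The base-point argument correctly gives first component $\tilde q_1$ for ${\bf F}_{L_d}(\tilde q_0,\tilde q_1)$, and injectivity of $\ff^-L_d$ on $V$ yields item (3). Your reading $\tilde q_2:=\pr_2({\bf F}_{L_d}(\tilde q_0,\tilde q_1))$ is the right correction of the typo in item (2): as printed, $\tilde q_2=q_2$, and the claim fails in general.

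The survey does not write out a proof of this statement. It only points to the zero-force case of the flow theorem in the cited works, so there is no competing argument in the text. Your factorization ${\bf F}_{L_d}=(\ff^-L_d)^{-1}\circ\ff^+L_d$ is the standard one. It uses exactly the notion of regularity the paper adopts and the paper's reformulation $\ff^+L_d(q_{k-1},q_k)=\ff^-L_d(q_k,q_{k+1})$ of the discrete Euler--Lagrange equations.

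The other common route, which is the one that survives once constraints and Lagrange multipliers enter (as in the constrained setting of the cited reference), goes through the implicit function theorem:
\begin{itemize}
\item Solve the three-point equation $D_2L_d(q_0,q_1)+D_1L_d(q_1,q_2)=0$ for $q_2$, using nondegeneracy in the $q_2$ slot.
\item Use nondegeneracy of $D_1D_2L_d(q_0,q_1)$ to see that $(q_0,q_1)\mapsto(q_1,q_2(q_0,q_1))$ is a local diffeomorphism.
\item Obtain uniqueness in (3) from the uniqueness clause of the implicit function theorem.
\end{itemize}
Your version makes the separate roles of the two hypotheses transparent. Regularity of $\ff^-L_d$ alone (plus continuity of $\ff^+L_d$) gives existence and uniqueness of the forward step. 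Regularity of $\ff^+L_d$ is needed only for ${\bf F}_{L_d}$ to be a diffeomorphism. The cost is that it does not extend directly to constrained systems, where no such Legendre factorization is available.
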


\subsection{Symmetric Discrete Mechanical Systems}

We now recall the notion of symmetry of a discrete mechanical system and a well--known preservation result.

Given a DMS $(Q,L_d)$, let us consider a Lie group $G$ acting on the left on $Q$ in a free and proper way by an action denoted $l^Q$, defining a principal bundle $\pi:Q\lra Q/G$ with structure group $G$. From now on, we will always deal with actions of Lie groups satisfying these conditions.

Consider the diagonal action of $G$ on $Q\times
Q$ defined by
\[
l^{Q\times Q}_g(q_0,q_1):=(l^Q_g(q_0),l^Q_g(q_1))
\]
and the lifted action of $G$ on $TQ$ given by $l^{TQ}_{g}(v_q):= T_ql^Q_{g}(v_q)$, where $v_q \in T_{q}Q$.

\begin{definition}
A Lie group $G$ is a \emph{symmetry group} of the DMS $(Q,L_d)$ if $L_d$ is $G$-invariant. That is, if
\[
L_d(l^Q_g(q_0),l^Q_g(q_1)) = L_d(q_0,q_1)
\]
for every $g \in G$ and every $(q_0,q_1)\in Q\times Q$.

In this case, it is possible to define a map on the quotient manifold $l_d :( Q \times Q)/G \lra \rr$ given by $l_d([q_0,q_1]_G):=L_d(q_0,q_1)$ and called \emph{reduced discrete Lagrangian}.
\end{definition}

In the discrete setting, there exists a conservation law in presence of symmetries in perfect analogy with the one existing for continuous systems.

\begin{definition}
Given a symmetry group $G$ of the DMS $(Q,L_d)$, the \emph{discrete momentum map} $J_{d}:Q\times Q\lra \mathfrak{g}^{\ast
} $ is defined by $\displaystyle{
J_{d}\left( q_{0},q_{1}\right) \left( \xi \right) :=-D_{1}L_{d}\left( q_{0},q_{1}\right) \xi _{Q}\left( q_{0}\right)}$,
where $\xi \in \mathfrak{g}:= \Lie G$ and
\[
\xi _{Q}\left( q\right)
:= \dfrac{d}{dt}l_{\exp \left( t\xi \right) }^{Q}\left( q\right) \bigg|_{t=0}
\]
is the infinitesimal generator of the action corresponding to $\xi \in \mathfrak{g}$.
\end{definition}

The next result is the discrete version of Noether's Theorem for discrete mechanical systems (see \cite[Theorem 1.3.3]{ar:marsden_west:2001:discrete_mechanics_and_variational_integrators}).

\begin{theorem}[Discrete Noether's Theorem]
Let $G$ be a symmetry group of the regular DMS $(Q,L_{d})$. Then, the corresponding discrete momentum map $J_{d}:Q\times Q\lra \mathfrak{g}^{\ast }$ is preserved by the flow\footnote{As we mentioned preoviously, the flow of a DMS is usually not globally defined, if it even exists. However, we will always assume that its domain is the whole $Q \times Q$ to avoid making the notation more cumbersome.} of the system $\mathbf{F}_{L_{d}}:Q\times Q\lra Q\times Q$; that is $\displaystyle{J_{d}\circ \mathbf{F}_{L_{d}}=J_{d}}$.
\end{theorem}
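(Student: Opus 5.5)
The plan is to differentiate the $G$-invariance of $L_d$ along one-parameter subgroups, which gives an infinitesimal invariance identity, and then combine it with the discrete Euler--Lagrange equations \eqref{dELe}.

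First, fix $\xi\in\frakg$ and $(q_0,q_1)\in Q\times Q$. Invariance gives $L_d(l^Q_{\exp(t\xi)}(q_0),l^Q_{\exp(t\xi)}(q_1))=L_d(q_0,q_1)$ for all $t$. Differentiating at $t=0$ and using the splitting $Tf(q_0,q_1)(\delta q_0,\delta q_1)=D_1f(q_0,q_1)(\delta q_0)+D_2f(q_0,q_1)(\delta q_1)$ from Section~\ref{subsection:preliminaries_on_product_manifolds}, we obtain
\[
D_1L_d(q_0,q_1)\,\xi_Q(q_0)+D_2L_d(q_0,q_1)\,\xi_Q(q_1)=0 .
\]
Equivalently, $J_d(q_0,q_1)(\xi)=D_2L_d(q_0,q_1)\,\xi_Q(q_1)$. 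So the momentum map can be computed either with $\ff^-L_d$ at the first point or with $\ff^+L_d$ at the second point.

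Next, let $(q_1,q_2)=\mathbf{F}_{L_d}(q_0,q_1)$. By Theorem~\ref{equations}, $(q_0,q_1,q_2)$ is a trajectory, so Theorem~\ref{DEE-L} gives
\[
D_2L_d(q_0,q_1)+D_1L_d(q_1,q_2)=0\in T^*_{q_1}Q .
\]
Evaluating this covector on $\xi_Q(q_1)$ and using the identity above yields
\[
J_d(q_1,q_2)(\xi)=-D_1L_d(q_1,q_2)\,\xi_Q(q_1)=D_2L_d(q_0,q_1)\,\xi_Q(q_1)=J_d(q_0,q_1)(\xi).
\]
Since $\xi$ is arbitrary, this gives $J_d\circ\mathbf{F}_{L_d}=J_d$. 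In the local setting of Theorem~\ref{equations}, the argument holds on the domain $U$, and the footnote's convention extends it to all of $Q\times Q$.

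No step here is a real obstacle. The only point needing care is the differentiation in the first step: the curve $t\mapsto l^{Q\times Q}_{\exp(t\xi)}(q_0,q_1)$ has velocity $(\xi_Q(q_0),\xi_Q(q_1))$ at $t=0$, and this vector splits as $j_1(\xi_Q(q_0))+j_2(\xi_Q(q_1))$ in $TQ^-\oplus TQ^+$. That splitting is exactly what produces the two terms of the infinitesimal invariance identity.
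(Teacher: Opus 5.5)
Your proof is correct. The infinitesimal invariance identity $D_1L_d(q_0,q_1)\,\xi_Q(q_0)+D_2L_d(q_0,q_1)\,\xi_Q(q_1)=0$, combined with the discrete Euler--Lagrange equation at $q_1$ paired against $\xi_Q(q_1)$, gives exactly $J_d(q_1,q_2)=J_d(q_0,q_1)$.

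The paper does not prove this result itself; it cites Marsden--West (Theorem 1.3.3). That argument rests on the same two ingredients: equality of the two momentum expressions, which follows from invariance, and the Euler--Lagrange equations, packaged there as the vanishing of the interior terms of $dS_d$ along solutions so that only boundary momenta survive. So yours is essentially the same proof, written in one-step, local form.
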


\subsection{Reduction of symmetric discrete mechanical systems}

As in the case of continuous Lagrangian systems, it is natural to wonder how to eliminate the symmetries present in a discrete mechanical system. The goal is to obtain a system (which we will call ``reduced'') whose equations of motion may be easier to solve and that, in turn, allow us to reconstruct the original solution.


In order to establish a reduced variational principle and write down reduced equations of motion, it is convenient to work with a space diffeomorphic to the quotient manifold $(Q \times Q)/G$ and a smooth function defined there that replaces the reduced discrete Lagrangian $l_d$. We will see that this can be done in a way that mirrors the continuos case, using an affine discrete connection on the principal bundle $\pi : Q \lra Q/G$.


\subsubsection{Discrete connections}

Discrete connections were introduced in \cite{un:leok_marsden_weinstein:2005:a_discrete_theory_of_connections_on_principal_bundles} and then redefined in \cite{ar:fernandez_tori_zuccalli:2010:lagrangian_reduction_of_discrete_mechanical_systems,ar:fernandez_zuccalli:2013:a_geometric_approach_to_discrete_connections_on_principal_bundles}, where a generalization of these objects (the so--called affine discrete connections) was presented. As in the case of principal connections, these objects can be defined in several ways. We now review some of them and recall their fundamental properties.

Let $\pi : Q \lra Q/G$ be a smooth principal $G$-bundle and $\gamma : Q \lra G$ be a smooth function satisfying $\gamma(l_g^Q(q)) = l_g^G(\gamma(q))\footnote{Recall that $G$ acts on itself by conjugation}$ that we call a \emph{level function}. We also consider the smooth map $\widetilde{\gamma} : Q \lra Q$ defined by $\widetilde{\gamma}(q) := l^Q_{\gamma(q)}(q)$ and its graph $\Gamma := \{ (q,\widetilde{\gamma}(q)) \in Q \times Q \mid q \in Q \}$; naturally, $\Gamma \subs Q \times Q$ is an embedded submanifold. Notice that $\widetilde{\gamma} \circ l_g^Q = l_g^Q \circ \widetilde{\gamma}$ and, also, $\pi \circ \widetilde{\gamma} = \pi$.

\begin{definition}\label{def:affine_discrete_connection}
Let $\Hor \subs Q \times Q$ be an embedded submanifold containing $\Gamma$ that is $G$-invariant for the diagonal action $l^{Q \times Q}$. We say that $\Hor$ is an \emph{affine discrete connection} on $\pi$ with level $\gamma$ if $(\id_Q \times \pi)|_{\Hor} : \Hor \lra Q \times Q/G$ is an injective local diffeomorphism.
\end{definition}

\begin{remark}
In the special case where $\widetilde{\gamma} = \id_Q$, affine discrete connections are called simply \emph{discrete connections} and were studied extensively in \cite{ar:fernandez_zuccalli:2013:a_geometric_approach_to_discrete_connections_on_principal_bundles,ar:fernandez_juchani_zuccalli:2022:discrete_connections_on_principal_bundles_the_discrete_atiyah_sequence,ar:fernandez_kordon:2025:the_integration_problem_for_principal_connections}.
\end{remark}

\begin{remark}\label{rmk:differences_in_definitions_discrete_connections}
Affine discrete connections were first considered in Definition 4.7 of \cite{ar:fernandez_tori_zuccalli:2010:lagrangian_reduction_of_discrete_mechanical_systems}. Still, the definition we introduce now is slightly stronger than the one considered in \cite{ar:fernandez_tori_zuccalli:2010:lagrangian_reduction_of_discrete_mechanical_systems}. Indeed, all submanifolds $\Hor$ satisfying Definition \ref{def:affine_discrete_connection} do satisfy Definition 4.7 of \cite{ar:fernandez_tori_zuccalli:2010:lagrangian_reduction_of_discrete_mechanical_systems}, but there are examples where the opposite fails. In fact, Definition \ref{def:affine_discrete_connection} is equivalent to Definition 4.7 of \cite{ar:fernandez_tori_zuccalli:2010:lagrangian_reduction_of_discrete_mechanical_systems} together with the condition that, if $\Hor \cap l_g^{Q \times Q_2}(\Hor) \neq \varnothing$ then $g = e$, where $l_g^{Q \times Q_2}(q_0,q_1) := (q_0,l_g^Q(q_1))$. This condition can be achieved in many cases by shrinking $\Hor$. We chose this strengthened version of the original notion because it is in line with Definition 2.1 of \cite{ar:fernandez_zuccalli:2013:a_geometric_approach_to_discrete_connections_on_principal_bundles}, with the advantage that most of the results of that paper have an analogue version for affine discrete connections.
\end{remark}

The following result is from Proposition 2.4 of \cite{ar:fernandez_zuccalli:2013:a_geometric_approach_to_discrete_connections_on_principal_bundles}.

\begin{proposition}\label{prop:discrete_connections_frakUs}
Let $\Hor$ be an affine discrete connection with level $\gamma$ on $\pi$. Then, the following assertions are true.
\begin{enumerate}
\item The subsets $\frakU := l_G^{Q \times Q_2}(\Hor) \subs Q \times Q$, $\frakU' := (\id_Q \times \pi)(\Hor) \subs Q \times (Q/G)$ and $\frakU'' := (\pi \times \pi)(\Hor) \subs (Q/G) \times (Q/G)$.

\item $\frakU$ is $G \times G$-invariant and $\frakU = (\pi \times \pi)^{-1}(\frakU'')$.
\end{enumerate}
\end{proposition}

Just as principal connections can be characterized by their connection $1$-form, we give next a characterization of affine discrete connections in the same spirit.

\begin{lemma}\label{lemma:unique_g}
Let $\Hor \subs Q \times Q$ define an affine discrete connection on $\pi$ with level function $\gamma$. For any $(q_0,q_1) \in \frakU$ (the open subset defined in Proposition \ref{prop:discrete_connections_frakUs}), there is a unique $g \in G$ such that $l^{Q \times Q_2}_{g^{-1}}(q_0,q_1) \in \Hor$.
\end{lemma}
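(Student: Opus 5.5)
Existence and uniqueness are handled separately; the plan uses only the definition of $\frakU$ from Proposition \ref{prop:discrete_connections_frakUs} and the injectivity in Definition \ref{def:affine_discrete_connection}. Throughout, $l^{Q \times Q_2}_g(q_0,q_1) := (q_0,l^Q_g(q_1))$ is the action of $G$ on the second factor. It is a left action, so $l^{Q\times Q_2}_{g}\circ l^{Q\times Q_2}_{h} = l^{Q\times Q_2}_{gh}$ and $l^{Q\times Q_2}_{g^{-1}}$ inverts $l^{Q\times Q_2}_{g}$.

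\emph{Existence.} By definition $\frakU = l_G^{Q\times Q_2}(\Hor)$. So for $(q_0,q_1)\in\frakU$ there are $g\in G$ and $(h_0,h_1)\in\Hor$ with $(q_0,q_1) = l^{Q\times Q_2}_g(h_0,h_1) = (h_0, l^Q_g(h_1))$. Applying $l^{Q\times Q_2}_{g^{-1}}$ to both sides gives $l^{Q\times Q_2}_{g^{-1}}(q_0,q_1) = (h_0,h_1)\in\Hor$, which is the required $g$.

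\emph{Uniqueness.} Suppose $g,g'\in G$ both satisfy $l^{Q\times Q_2}_{g^{-1}}(q_0,q_1),\ l^{Q\times Q_2}_{g'^{-1}}(q_0,q_1)\in\Hor$. These points are $(q_0,l^Q_{g^{-1}}(q_1))$ and $(q_0,l^Q_{g'^{-1}}(q_1))$, and $\pi(l^Q_{g^{-1}}(q_1)) = \pi(q_1) = \pi(l^Q_{g'^{-1}}(q_1))$. Hence both points have the same image $(q_0,\pi(q_1))$ under $(\id_Q\times\pi)|_{\Hor}$. That map is injective by Definition \ref{def:affine_discrete_connection}, so the two points coincide, giving $l^Q_{g^{-1}}(q_1) = l^Q_{g'^{-1}}(q_1)$. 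Since the action $l^Q$ is free, $g^{-1}=g'^{-1}$, so $g = g'$.

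The only step needing care is the uniqueness, and it is short. It relies on the strengthened Definition \ref{def:affine_discrete_connection}: injectivity of $(\id_Q\times\pi)|_{\Hor}$, not merely its being a local diffeomorphism, is what rules out two different group elements. This is exactly the extra condition $\Hor\cap l_g^{Q\times Q_2}(\Hor)\neq\varnothing \Rightarrow g=e$ discussed in Remark \ref{rmk:differences_in_definitions_discrete_connections}. Freeness of the action then turns equality of points into equality of group elements.
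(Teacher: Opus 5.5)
Your proof is correct and follows the paper's argument: existence comes directly from $\frakU = l_G^{Q\times Q_2}(\Hor)$, and uniqueness comes from the injectivity of $(\id_Q\times\pi)|_{\Hor}$. You also make explicit the freeness of $l^Q$ needed to pass from $l^Q_{g^{-1}}(q_1)=l^Q_{g'^{-1}}(q_1)$ to $g=g'$, which the paper leaves implicit.
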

\begin{proof}
The existence of $g \in G$ is due to the definition of $\frakU$. Its uniqueness follows from the injectivity of $(\id_Q \times \pi)|_{\Hor}$.
\end{proof}

\begin{definition}
The map $\calA_d : \frakU \lra G$ defined by $\calA_d(q_0,q_1) := g$, where $g$ is the element of $G$ associated to $(q_0,q_1)$ by Lemma \ref{lemma:unique_g}, is called the \emph{affine discrete connection form} associated to the affine discrete connection defined by $\Hor$.
\end{definition}

We can also give a direct formula for $\calA_d$,
\[
\calA_d(q_0,q_1) = \kappa(\pr_2(((\id_Q \times \pi)|_{\Hor})^{-1} ((\id_Q \times \pi)(q_0,q_1))), q_1),
\]
where $\kappa : Q \times_G Q \lra G$ is the smooth map with the property that $\kappa(q,q') = g$ if and only if $l_g^Q(q) = q'$; recall that $Q\times_G Q$ is the fiber product, whose elements are the pairs $(q,q')\in Q\times Q$ such that $\pi(q)=\pi(q')$. This direct formula shows that $\calA_d$ is a smooth map. The following is an important characterization of affine discrete connections.

\begin{theorem}\label{thm:calAd_equivariance}
Let $\Hor_{\calA_d}$ be an affine discrete connection on $\pi$ with level function $\gamma$ and domain $\frakU$. Then, for all $(q_0,q_1) \in \frakU$ and $g_0,g_1 \in G$,
\begin{equation}\label{eq:calAd_equivariance}
\calA_d(l_g^Q(q_0),l_g^Q(q_1)) = g_1 \calA_d(q_0,q_1) g_0^{-1}.
\end{equation}

In addition, $\Hor_{\calA_d} = \{ (q_0,q_1) \in \frakU \mid \calA_d(q_0,q_1) = e \}$. Conversely, given a smooth function $\calA : \mathcal{U} \lra G$, where $\mathcal{U} \subs Q \times Q$ is an open subset containing $\Gamma$ and is $G \times G$-invariant, such that \eqref{eq:calAd_equivariance} (with $\calA_d$ replaced by $\calA$) and $\calA(\Gamma) = \{ e \}$, then $\Hor := \{ (q_0,q_1) \in \mathcal{U} \mid \calA(q_0,q_1) = e \}$ defines an affine discrete connection with level function $\gamma$ whose associated discrete connection form is $\calA$.
\end{theorem}
\begin{proof}
This is, almost verbatim, the proof of Theorem 3.4 of \cite{ar:fernandez_zuccalli:2013:a_geometric_approach_to_discrete_connections_on_principal_bundles}.
\end{proof}

Another useful characterization of connections in a principal bundle is given by the horizontal lift. Next, we introduce a similar characterization for affine discrete connections.

\begin{definition}
Let $\Hor_{\calA_d}$ be an affine discrete connection with domain $\frakU$ on $\pi$ with level function $\gamma$; let $\frakU' \subs Q \times (Q/G)$ be the open subset defined in Proposition \ref{prop:discrete_connections_frakUs}. The \emph{associated affine discrete horizontal lift} $h_d : \frakU' \lra Q \times Q$ is the inverse map of the diffeomorphism $(\id_Q \times \pi)|_{\Hor_{\calA_d}}^{\frakU'} : \Hor_{\calA_d} \lra \frakU'$. Explicitly,
\[
h_d(q_0,\tau_1) = (q_0,q_1) \quad \Longleftrightarrow \quad (q_0,q_1) \in \Hor_{\calA_d} \text{ and } \pi(q_1) = \tau_1.
\]

It is convenient to write $h_d^{q_0}(\tau_1) := h_d(q_0,\tau_1)$ and define $\overline{h_d^{q_0}} := \pr_2 \circ h_d^{q_0}$.
\end{definition}

\begin{remark}
When $(q_0,q_1) \in \frakU$, if $g := \calA_d(q_0,q_1)$, we know that $(q_0,l^Q_{g^{-1}}(q_1)) \in \Hor_{\calA_d}$. Then, $h_d^{q_0}(\pi(q_1)) = (q_0, l^Q_{g^{-1}}(q_1))$ and $\overline{h_d^{q_0}}(\pi(q_1)) = l^Q_{g^{-1}}(q_1)$, so that
\[
l^Q_g(\overline{h_d^{q_0}}(\pi(q_1))) = q_1.
\]

Thus,
\begin{equation}
\calA_d(q_0,q_1) = \kappa (\overline{h_d^{q_0}}(\pi(q_1)),q_1) \quad \text{for all } (q_0,q_1) \in \frakU.
\end{equation}
\end{remark}

The following result establishes the basic properties of the discrete horizontal lift associated to an affine discrete connection. It also proves that an affine discrete connection can be reconstructed out of its associated affine discrete horizontal lift.

\begin{theorem}\label{thm:horizontal_lift_properties}
Let $\calA_d$ be an affine discrete connection on $\pi : Q \lra Q/G$ with domain $\frakU$ and level function $\gamma$. Then, the following assertions are true.
\begin{enumerate}
\item $\frakU' \subs Q \times (Q/G)$ is $G$-invariant for the action defined by $l^{Q \times (Q/G)}_g (q,\tau) := (l^Q_g(q),\tau)$.

\item $h_d : \frakU' \lra Q \times Q$ is smooth and $G$-equivariant for the $G$-actions $l^{Q \times (Q/G)}$ and $l^{Q \times Q}$.

\item $h_d$ is a section over $\frakU'$ of $\id_Q \times \pi : Q \times Q \lra Q \times (Q/G)$, that is, $(\id_Q \times \pi) \circ h_d = \id_{\frakU'}$.

\item For every $q \in Q$, $(q,\pi(q)) \in \frakU'$ and $h_d^q(\pi(q)) = (q,\widetilde{\gamma}(q))$.
\end{enumerate}

Conversely, assume that $\mathcal{U}' \subs Q \times (Q/G)$ is an open set that satisfies condition ({\it 1}) (with $\frakU'$ replaced by $\mathcal{U}'$) and $h : \mathcal{U}' \lra Q \times Q$ is a map such that conditions ({\it 2}), ({\it 3}) and ({\it 4}) are satisfied (with $\frakU'$ and $h_d$ replaced by $\mathcal{U}'$ and $h$). Then, there exists a unique affine discrete connection $\calA_d$ with domain $\frakU := (\id_Q \times \pi)^{-1}(\mathcal{U}')$ on $\pi$ and level function $\gamma$ such that $\mathcal{U}' = \frakU'$ and $h = h_d$.
\end{theorem}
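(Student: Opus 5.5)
The forward direction follows directly from the definitions and from Proposition \ref{prop:discrete_connections_frakUs} and Theorem \ref{thm:calAd_equivariance}; the converse requires building $\Hor$ from $h$ and checking Definition \ref{def:affine_discrete_connection}.

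For (1), take $(q,\tau)\in\frakU'$, so $(q,\tau)=(q,\pi(q_1))$ with $(q,q_1)\in\Hor_{\calA_d}$. Since $\Hor_{\calA_d}$ is invariant under the diagonal action, $(l^Q_g(q),l^Q_g(q_1))\in\Hor_{\calA_d}$. Its image under $\id_Q\times\pi$ is $(l^Q_g(q),\tau)$, which proves (1). The same computation, combined with the injectivity of $(\id_Q\times\pi)|_{\Hor_{\calA_d}}$, gives the equivariance $h_d(l^Q_g(q),\tau)=l^{Q\times Q}_g(h_d(q,\tau))$. Smoothness of $h_d$ holds because $(\id_Q\times\pi)|_{\Hor_{\calA_d}}$ is an injective local diffeomorphism, hence a diffeomorphism onto its open image $\frakU'$. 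Item (3) is the definition of $h_d$ as an inverse. For (4), note that $(q,\widetilde{\gamma}(q))\in\Gamma\subs\Hor_{\calA_d}$ and $\pi(\widetilde{\gamma}(q))=\pi(q)$. Hence $(q,\pi(q))\in\frakU'$, and $h_d^q(\pi(q))=(q,\widetilde{\gamma}(q))$.

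For the converse, set $\Hor:=h(\mathcal{U}')$ and $\frakU:=(\id_Q\times\pi)^{-1}(\mathcal{U}')$. The set $\frakU$ is open because $\mathcal{U}'$ is open. By (3), $h$ is an injective immersion and a section, and it is a homeomorphism onto its image with inverse $(\id_Q\times\pi)|_{\Hor}$. Therefore $\Hor$ is an embedded submanifold, and $(\id_Q\times\pi)|_\Hor$ is a diffeomorphism onto $\mathcal{U}'$, in particular an injective local diffeomorphism. Property (4) gives $\Gamma\subs\Hor$, and properties (1) and (2) give $G$-invariance of $\Hor$. So $\Hor$ is an affine discrete connection with level $\gamma$.

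Then $\frakU'=(\id_Q\times\pi)(\Hor)=\mathcal{U}'$, and $h_d=h$ because both are inverses of the same map. We also need $\frakU=l_G^{Q\times Q_2}(\Hor)$. If $(q_0,q_1)\in\frakU$, then $h(q_0,\pi(q_1))=(q_0,q_1')$ with $\pi(q_1')=\pi(q_1)$, so $q_1=l^Q_g(q_1')$ for some $g\in G$. The reverse inclusion is immediate. Uniqueness holds because any affine discrete connection with lift $h$ must have $\Hor=h(\frakU')$.

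I expect the main obstacle to be the embedded-submanifold claim, namely that a smooth section of the submersion $\id_Q\times\pi$ over an open set has embedded image. I will handle it by noting that $h$ is a smooth right inverse of a continuous map, so $h$ is a topological embedding and also an immersion.
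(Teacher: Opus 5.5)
Your proof is correct and complete. The forward items follow, as you say, from diagonal invariance of $\Hor_{\calA_d}$, injectivity of $(\id_Q\times\pi)|_{\Hor_{\calA_d}}$, and $\Gamma\subs\Hor_{\calA_d}$. For the converse, the one nontrivial input is that a smooth section of $\id_Q\times\pi$ is a smooth embedding. It is an immersion because $T(\id_Q\times\pi)\circ Th=\id$, and a topological embedding because $(\id_Q\times\pi)|_{h(\mathcal{U}')}$ is a continuous inverse. That is exactly what makes $\Hor:=h(\mathcal{U}')$ satisfy Definition \ref{def:affine_discrete_connection}.

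The paper gives no argument of its own; it only defers to Theorem 4.4 of \cite{ar:fernandez_zuccalli:2013:a_geometric_approach_to_discrete_connections_on_principal_bundles}. The remark placed just before the statement, $\calA_d(q_0,q_1)=\kappa(\overline{h_d^{q_0}}(\pi(q_1)),q_1)$, suggests the converse there is done at the level of connection forms. On that route you would:
\begin{enumerate}
\item define $\calA(q_0,q_1):=\kappa(\pr_2(h(q_0,\pi(q_1))),q_1)$ on $(\id_Q\times\pi)^{-1}(\mathcal{U}')$;
\item obtain \eqref{eq:calAd_equivariance} from the equivariance of $h$ and the identity $\kappa(l^Q_a(x),l^Q_b(y))=b\,\kappa(x,y)\,a^{-1}$;
\item obtain $\calA(\Gamma)=\{e\}$ from item (4);
\item apply the converse part of Theorem \ref{thm:calAd_equivariance}.
\end{enumerate}

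The form-based route hands the embedded-submanifold and local-diffeomorphism checks to Theorem \ref{thm:calAd_equivariance} and produces the connection form directly. Its cost is verifying smoothness and two-sided equivariance of $\calA$. Your route avoids connection forms entirely, whose characterization the paper also only cites, and checks the definition by hand with one elementary lemma. The two constructions give the same object, since the section property yields $\calA^{-1}(e)=h(\mathcal{U}')$.
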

\begin{proof}
This is, almost verbatim, the proof of Theorem 4.4 in \cite{ar:fernandez_zuccalli:2013:a_geometric_approach_to_discrete_connections_on_principal_bundles}.
\end{proof}

\subsubsection{Reduced spaces and reduced Lagrangians}\label{subsubsec:reduced_spaces_and_lagrangians_general}

We will now use an affine discrete connection on the principal bundle $\pi$ to construct a smooth manifold diffeomorphic to the quotient manifold $(Q \times Q)/G$. To keep the notation simple, in what follows we will always asume that the affine discrete connections are defined on the whole space.

\begin{definition}
Consider a principal bundle $\pi$ and the action of a Lie group $G$ on $Q \times G$ given by $l_g^{Q \times G}(q,w) := (l_g^Q(q),l_g^G(w))$. Since $\pi$ is a principal bundle, the quotient $\tilde{G} := (Q \times G)/G$ is a smooth manifold and the quotient map denoted by $\rho : Q \times G \lra \tilde{G}$ is smooth.

In addition, $\tilde{G}$ is a fiber bundle over $Q/G$ called \emph{associated conjugante bundle}, whose projection induced by $\pr_1$ will be denoted by $\pr_{Q/G}$.
\end{definition}

An affine discrete connection $\mathcal{A}_d$ allows us to identify the space $(Q\times Q)/G$ with the product manifold $\tilde{G} \times (Q/G)$, as can be seen in \cite[Proposition 4.19]{ar:fernandez_tori_zuccalli:2010:lagrangian_reduction_of_discrete_mechanical_systems}.

\begin{proposition}\label{affine-isom}
Given an affine discrete connection $\mathcal{A}_d$ on $\pi:Q\lra Q/G$, let $\tilde{{\Phi}}_{\mathcal{A}_d}:Q\times Q \lra Q\times G \times (Q/G)$ and $\tilde{{\Psi}}_{\mathcal{A}_d} : Q\times G \times (Q/G)\lra Q\times Q$ be the maps defined by
\[
\tilde{{\Phi}}_{\mathcal{A}_d}(q_0,q_1) := (q_0,\mathcal{A}_d(q_0,q_1), \pi(q_1)) \ \ \mbox{and}\  \ \tilde{{\Psi}}_{\mathcal{A}_d}(q_0,w_0,\tau_1) := (q_0,\tilde{F}_1(q_0,w_0,\tau_1)),
\]
where $\tilde{F}_1 : Q \times G \times (Q/G) \lra Q$ is the map given by $(q,w,\tau) \mapsto l_{w}^Q \left( \overline{h_d^{q}} (\tau) \right)$.

Then, $\tilde{{\Phi}}_{\mathcal{A}_d}$ and $\tilde{{\Psi}}_{\mathcal{A}_d}$ are smooth and mutually inverses. In addition, considering the diagonal action of $G$ on $Q\times Q$ and
\[
l^{Q\times G \times (Q/G)}_g(q_0,w_0,\tau_1):=(l^Q_g(q_0),l^G_g(w_0),\tau_1),
\]
both maps are $G$-equivariant and, hence, induce the diffeomorphisms $\Phi_{\mathcal{A}_d}:(Q\times Q)/G \lra \tilde{G} \times (Q/G) $ and $\Psi_{\mathcal{A}_d} : \tilde{G} \times (Q/G) \lra (Q\times Q)/G$.
\end{proposition}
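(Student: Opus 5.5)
The plan is to check smoothness of both maps, verify directly that they compose to the identity in both orders, then verify $G$-equivariance, and finally pass to the quotients. Throughout I use the standing assumption that $\calA_d$ is defined on all of $Q\times Q$, so $\frakU = Q\times Q$ and $\frakU' = Q\times(Q/G)$.

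\emph{Smoothness.} $\tilde{\Phi}_{\calA_d}$ is smooth because its components $q_0$, $\calA_d(q_0,q_1)$ and $\pi(q_1)$ are; $\calA_d$ is smooth by the direct formula through $\kappa$. $\tilde{\Psi}_{\calA_d}$ is smooth because $h_d$ is smooth (Theorem \ref{thm:horizontal_lift_properties}(2)), hence so is $(q,\tau)\mapsto \overline{h_d^{q}}(\tau)$, and the action $l^Q$ is smooth.

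\emph{Inverses.} For $\tilde{\Psi}\circ\tilde{\Phi}$: put $g=\calA_d(q_0,q_1)$. The Remark following the definition of $h_d$ gives $l^Q_g(\overline{h_d^{q_0}}(\pi(q_1)))=q_1$, so
\[
\tilde{\Psi}_{\calA_d}(\tilde{\Phi}_{\calA_d}(q_0,q_1))=(q_0,q_1).
\]
For $\tilde{\Phi}\circ\tilde{\Psi}$: given $(q_0,w_0,\tau_1)$, set $q_1' := \overline{h_d^{q_0}}(\tau_1)$, so $(q_0,q_1')\in\Hor_{\calA_d}$ and $\pi(q_1')=\tau_1$. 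Then $\pi(l^Q_{w_0}(q_1'))=\tau_1$, which gives the third component. For the second component, apply \eqref{eq:calAd_equivariance} with $g_0=e$ and $g_1=w_0$:
\[
\calA_d(q_0,l^Q_{w_0}(q_1'))=w_0\,\calA_d(q_0,q_1')=w_0,
\]
using $\calA_d=e$ on $\Hor_{\calA_d}$ (Theorem \ref{thm:calAd_equivariance}). I read \eqref{eq:calAd_equivariance} as $\calA_d(l^Q_{g_0}(q_0),l^Q_{g_1}(q_1))=g_1\calA_d(q_0,q_1)g_0^{-1}$, which is what the quantifiers over $g_0,g_1$ indicate. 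This step is the crux, and it depends entirely on that reading of the equivariance formula.

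\emph{Equivariance.} Using \eqref{eq:calAd_equivariance} with $g_0=g_1=g$, together with $\pi\circ l^Q_g=\pi$,
\[
\tilde{\Phi}_{\calA_d}(l^Q_g q_0,l^Q_g q_1)=\bigl(l^Q_g q_0,\; g\,\calA_d(q_0,q_1)\,g^{-1},\; \pi(q_1)\bigr)=l^{Q\times G\times(Q/G)}_g\bigl(\tilde{\Phi}_{\calA_d}(q_0,q_1)\bigr),
\]
since the $G$-action on itself is conjugation. Equivariance of $\tilde{\Psi}_{\calA_d}$ then follows because it is the inverse of an equivariant bijection. Alternatively, it can be checked directly from equivariance of $h_d$ (Theorem \ref{thm:horizontal_lift_properties}(2)), which gives $\overline{h_d^{l_g q}}(\tau)=l_g\,\overline{h_d^{q}}(\tau)$, and then $l_{gwg^{-1}}l_g=l_g l_w$.

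\emph{Quotients.} Equivariant smooth maps descend to smooth maps between the quotients, and these descended maps remain mutual inverses. Finally, identify $(Q\times G\times(Q/G))/G$ with $\tilde{G}\times(Q/G)$; this holds because $G$ acts trivially on the $Q/G$ factor, and the quotient is smooth since the action on $Q\times G$ is free and proper. The maps $\Phi_{\calA_d}$ and $\Psi_{\calA_d}$ are therefore mutually inverse diffeomorphisms.
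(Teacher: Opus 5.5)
Your proposal is correct and follows essentially the paper's route. The paper defers to Proposition 4.19 of the 2010 reference and notes that the argument only uses the equivariance of $\calA_d$ together with $\Hor_{\calA_d}=\calA_d^{-1}(e)$, and the properties of $h_d$ in Theorem \ref{thm:horizontal_lift_properties}; these are exactly your ingredients.

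Your reading of \eqref{eq:calAd_equivariance} as $\calA_d(l^Q_{g_0}(q_0),l^Q_{g_1}(q_1))=g_1\calA_d(q_0,q_1)g_0^{-1}$ is the intended one, since the displayed left-hand side has a typo. In any case, $\calA_d(q_0,l^Q_{w_0}(q_1'))=w_0$ also follows directly from the uniqueness in Lemma \ref{lemma:unique_g}, so your crux step does not depend on that reading.
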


The proof of Proposition 4.19 given in \cite{ar:fernandez_tori_zuccalli:2010:lagrangian_reduction_of_discrete_mechanical_systems} relies on the notion of affine discrete connection introduced in that paper. Still, it only uses the properties stated in Theorems \ref{thm:calAd_equivariance} and \ref{thm:horizontal_lift_properties} which, we saw, remain valid for affine discrete connections in the sense of Definition \ref{def:affine_discrete_connection}.

Using the diffeomorphism $\tilde{\Phi}_{\calA_d}$, we can define the function $\check{L}_d : Q \times G \times (Q/G) \lra \rr$ by $\check{L}_d := L_d \circ (\tilde{\Phi}_{\calA_d})^{-1}$. Since $\check{L}_d$ is $G$-invariant, it induces a function that we may call product reduced Lagrangian $\hat{L}_d$ on $\tilde{G} \times (Q/G)$ given by
$$\hat{L}_d(\rho(q_0,w_0),\tau_1) := \check{L}_d(q_0,w_0,\tau_1)=L_d(q_0,\tilde{F}_1(q_0,w_0,\tau_1)).$$

Considering the map $\Upsilon:Q \times Q \lra \tilde{G} \times (Q/G)$ defined by
$$
\Upsilon(q_0,q_1):=(\rho(q_0,\mathcal{A}_d(q_0,q_1)), \pi(q_1)),
$$
the following diagram commutes:
\begin{equation*}
\xymatrixcolsep{5pc}\xymatrixrowsep{4pc}\xymatrix{
& & \rr \\ 
Q \times Q \ar@/^/[urr]^{L_d} \ar[d]_{\tilde{\pi}}
\ar[r]^{\tilde{\Phi}_{\mathcal{A}_d}} \ar[dr]_{\Upsilon} & Q \times G \times (Q/G) \ar[ur]^(.4){\check{L}_d} \ar[d]^{\rho \times \id_{Q/G}} & \\
(Q\times Q)/G \ar[r]_{\Phi_{\mathcal{A}_d}} & \tilde{G} \times (Q/G) \ar@/_1.5pc/[uur]_{\hat{L}_d} &
}
\end{equation*}

\subsubsection{Reduced dynamics}

We now relate the dynamics of the DMS $(Q,L_d)$ with the dynamics of the reduced system on $\tilde{G}\times Q/G$.
We are going to establish a reduced variational principle considering the reduced Lagrangian $\hat{L}_d:\tilde{G} \times (Q/G)\lra \rr$, the discrete action
\[
\hat{S}_d(v.,\tau.):=\sum_{k=0}^{N-1} \hat{L}_d(v_k,\tau_{k+1}),
\]
where $(v.,\tau.) := ((v_0,\tau_1),\ldots,(v_{N-1},\tau_N))$, and an affine discrete connection $\mathcal{A}_{d}$ on the principal bundle $\pi : Q \lra Q/G$. To obtain equations of motion for the reduced system, the idea will be to use that when we consider the product manifold $Q \times G \times (Q/G)$, the differential of $\check{L}_d$ is decomposed as a sum of differentials (one for each factor), as we previously mentioned (see Section \ref{subsection:preliminaries_on_product_manifolds}).

We can relate the discrete actions $\hat{S}_d$ and $S_d$ as follows. Given a discrete curve $q. := (q_0,\ldots,q_N)$ on $Q$, let $(v.,\tau.)$ be the discrete curve on $\tilde{G} \times (Q/G)$ defined by $\tau_{k}:=\pi \left( q_{k}\right) $, $w_{k}:=\mathcal{A}_{d}\left( q_{k},q_{k+1}\right) $ and $v_{k}:=\rho \left( q_{k},w_{k}\right) $ (that is, $(v_k,\tau_k) := \Upsilon(q_k,q_{k+1})$). If $\delta q.$ is an infinitesimal variaton of the curve $q.$, we define the infinitesimal variation $(\delta v.,\delta \tau.)$ of $(v.,\tau.)$ by $\delta \tau_k := T_{q_k}\pi(\delta q_k)$, $1 \le k \le N$, and $\delta v_k := T_{(q_k,w_k)}\rho(\delta q_k,\delta w_k)$, $0 \le k \le N-1$, with $\delta w_k := T_{(q_k,q_{k+1})}\calA_d\left( \delta q_{k},\delta q_{k+1}\right)$.

It is easy to see that for these curves and infinitesimal variations,
\[
dS_d(q.)(\delta q.)=d\hat{S}_{d}\left( v_{\cdot },\tau_{\cdot}\right) \left( \delta v_{\cdot },\delta \tau_{\cdot }\right).
\]

Indeed, a simple computation shows that
\[
\begin{split}
T_{\left( q_{k},q_{k+1}\right)}&\Upsilon \left(
\delta q_{k},\delta q_{k+1}\right) = \\
& \left( T_{(q_k,\calA_d(q_k,q_{k+1}))}\rho \left(
\delta q_{k},T_{(q_k,q_{k+1})}\calA_d \left( \delta
q_{k},\delta q_{k+1}\right),T_{q_{k+1}}\pi(\delta q_{k+1})\right) \right).
\end{split}
\]

Hence,
\begin{equation*}\label{forcedVP}
\begin{split}
dS_d(q.)(\delta q.) &=  \sum_{k=0}^{N-1} dL_{d}\left( q_{k},q_{k+1}\right) \left( \delta
q_{k},\delta q_{k+1}\right) \\
&=  \sum_{k=0}^{N-1}d\hat{L}_{d}\left( \Upsilon \left(
q_{k},q_{k+1}\right) \right) \left( T_{\left( q_{k},q_{k+1}\right)}\Upsilon \left(
\delta q_{k},\delta q_{k+1}\right) \right) \\
& = \sum_{k=0}^{N-1}d\hat{L}_{d}\left( v_{k},\tau_{k+1}\right) \left( T_{(q_k,\calA_d(q_k,q_{k+1}))}\rho \left(
\delta q_{k}, T_{(q_k,q_{k+1})}\calA_d \left( \delta
q_{k},\delta q_{k+1} \right) \right), \right. \\
& \ \ \ \ \ \ \ \ \left. T_{q_{k+1}}\pi (\delta q_{k+1}) \right) \\
& = \sum_{k=0}^{N-1}d\hat{L}_{d}\left( v_{k},\tau_{k+1}\right) \left( \delta v_{k},\delta \tau_{k+1} \right) = d\hat{S}_{d}\left( v_{\cdot },\tau_{\cdot}\right) \left( \delta v_{\cdot },\delta \tau_{\cdot }\right).
\end{split}
\end{equation*}

In other words, we have the following result:

\begin{lemma}\label{lemma:q trajectory iff vtau satisfies red var principle}
Let $(Q,L_d)$ be a DMS. Using the previous notation for the discrete curves and infinitesimal variations,
\[
dS_d(q.)(\delta q.)=d\hat{S}_{d}\left( v_{\cdot },\tau_{\cdot}\right) \left( \delta v_{\cdot },\delta \tau_{\cdot }\right).
\]
\end{lemma}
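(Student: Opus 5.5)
The identity follows from the chain rule applied to the factorization $L_d = \hat{L}_d \circ \Upsilon$, which can be read off the commutative diagram above. Indeed, $\Upsilon = (\rho \times \id_{Q/G}) \circ \tilde{\Phi}_{\calA_d}$ and $\hat{L}_d \circ (\rho \times \id_{Q/G}) = \check{L}_d = L_d \circ (\tilde{\Phi}_{\calA_d})^{-1}$. Together these give $\hat{L}_d \circ \Upsilon = L_d$ on $Q\times Q$. Since $\rho$, $\calA_d$ and $\pi$ are smooth, $\Upsilon$ is smooth, so each term of the action can be differentiated through $\Upsilon$.

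The first step is to write $dS_d(q.)(\delta q.)$ as $\sum_{k=0}^{N-1} dL_d(q_k,q_{k+1})(\delta q_k,\delta q_{k+1})$. This uses the product-manifold identification of Section~\ref{subsection:preliminaries_on_product_manifolds}: the variation of $S_d$ restricted to the pair of factors $(k,k+1)$ is $(\delta q_k,\delta q_{k+1}) \in T_{(q_k,q_{k+1})}(Q\times Q)$.

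Next, apply the chain rule to each summand. This gives $dL_d(q_k,q_{k+1}) = d\hat{L}_d(\Upsilon(q_k,q_{k+1})) \circ T_{(q_k,q_{k+1})}\Upsilon$.

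Then compute $T\Upsilon$ componentwise. The second component of $\Upsilon$ is $\pi\circ\pr_2$, whose differential is $T_{q_{k+1}}\pi(\delta q_{k+1})$. The first component is $\rho\circ(\pr_1,\calA_d)$, whose differential is $T_{(q_k,w_k)}\rho(\delta q_k, T_{(q_k,q_{k+1})}\calA_d(\delta q_k,\delta q_{k+1}))$. By the definitions of $\delta v_k$ and $\delta\tau_{k+1}$, this means $T\Upsilon(\delta q_k,\delta q_{k+1}) = (\delta v_k,\delta\tau_{k+1})$.

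Finally, summing over $k$ and using $\Upsilon(q_k,q_{k+1}) = (v_k,\tau_{k+1})$ gives $\sum_k d\hat{L}_d(v_k,\tau_{k+1})(\delta v_k,\delta\tau_{k+1})$. This equals $d\hat{S}_d(v.,\tau.)(\delta v.,\delta\tau.)$, again by the product decomposition applied to $\hat{S}_d$ on $(\tilde{G}\times Q/G)^N$.

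There is no serious obstacle. The only point requiring care is bookkeeping. In the text, the curve $(v.,\tau.)$ pairs $v_k$ with $\tau_{k+1}$, so the second component of $\Upsilon(q_k,q_{k+1})$ must be matched with $\tau_{k+1}=\pi(q_{k+1})$. One must also check that the variation $\delta q_k$ enters both the $k$-th and the $(k-1)$-th summands in the same way on both sides, which is automatic once everything is expressed summand by summand.
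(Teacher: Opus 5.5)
Your proposal is correct and follows the paper's own argument. You factor $L_d = \hat{L}_d \circ \Upsilon$, compute $T_{(q_k,q_{k+1})}\Upsilon(\delta q_k,\delta q_{k+1}) = (\delta v_k,\delta \tau_{k+1})$ componentwise, and sum the chain rule over $k$. Your pairing of $\Upsilon(q_k,q_{k+1})$ with $(v_k,\tau_{k+1})$ is the right bookkeeping; the parenthetical $(v_k,\tau_k)$ in the text is a typo.
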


So far, the infinitesimal variations $(\delta v., \delta \tau.)$ are defined in terms of infinitesimal variations $\delta q_\cdot$. The next step will be to give a description of these ``reduced variations" in terms that are intrinsic to the reduced space.

In order to do this, in addition to the previous elements, we will consider a principal connection $\frakA$ on the principal bundle $\pi$ with horizontal subbundle $\Hor_\frakA$. Hence, $TQ = \mathcal{V}^\SG \oplus \Hor_\frakA$, where the \emph{vertical subbundle} $\mathcal{V}^\SG$ over $Q$ is a subbundle of $TQ$ with fibers $\mathcal{V}^\SG_q:=T_q(l^Q_\SG(\{q\})) = \ker (T_q\pi)$.
The associated \emph{horizontal lift} is the map $h : Q \times_{Q/G} T(Q/G) \lra TQ$ defined by
$\displaystyle{h^q(w_{\pi(q)}) = v_q \ \text{if} \ v_q \in \Hor_\frakA(q) \ \text{and} \ T_q\pi ( v_q )= w_{\pi(q)}}$, where $\displaystyle{Q \times_{Q/G} T(Q/G)}$ is the fibered product over $Q/G$ of the fiber bundles $\pi$ and the tangent bundle of $Q/G$.

This connection will allow us to split the infinitesimal variations $\delta v_k$ in a convenient way to express the equations of motion of the reduced system, as can be seen in the proof of Theorem 5.11 of \cite{ar:fernandez_tori_zuccalli:2010:lagrangian_reduction_of_discrete_mechanical_systems}. For the moment, we use it to establish the equivalence between the reduced variational principle and the discrete Hamilton principle.

\begin{proposition}\label{prop:q trajectory iff vtau satisfies red var principle}
Given a DMS $(Q,L_d)$ with symmetry group $\SG$, let $q_\cdot := (q_0,\ldots,q_N)$ be a discrete curve on $Q$ and $\tau_k:=\pi(q_k)$, $w_k:=\CD(q_k,q_{k+1})$ and $v_k:=\rho(q_k,w_k)$ be the corresponding discrete curves on $Q/\SG$, $\SG$ and $\ti{\SG}$, respectively. Then, the following statements are equivalent:
\begin{enumerate}
\item $q.$ satisfies the variational principle $dS_d(q_\cdot)(\delta q_\cdot) = 0$ for all infinitesimal variations with fixed endpoints $\delta q_\cdot$ of $q_\cdot$.

\item $(v.,\tau_\cdot)$ satisfies the variational principle
\[
d\hat{S}_d(v.,\tau_\cdot) (\delta v., \delta \tau_\cdot) = 0
\]
for every infinitesimal variation $(\delta v_\cdot, \delta \tau_\cdot)$ of $(v., \tau_\cdot)$ satisfying:
\begin{enumerate}
	\item $\delta \tau_k \in T_{\tau_k}(Q/G)$ for all $k = 1,\ldots,N$,
	\item for $k = 0,\ldots,N-1$,
	\begin{equation}\label{eq:delta_vk-def}
		\begin{split}
			\delta v_k :=& \; T_{(q_k,w_k)}\rho \big(\HLc{q_k}(\delta \tau_k),
			T_{(q_k,q_{k+1})}\CD(\HLc{q_k}(\delta \tau_k),\HLc{q_{k+1}}(\delta
			\tau_{k+1}))\big) \\
			& + T_{(q_k,w_k)}\rho\big((\xi_k)_Q(q_k),
			T_{(q_k,q_{k+1})}\CD((\xi_k)_Q(q_k),(\xi_{k+1})_Q(q_{k+1}))\big),
		\end{split}
	\end{equation}
	where $\xi. := (\xi_0,\ldots,\xi_N) \in \frakg^{N+1}$ and $h$ is the horizontal lift associated to $\frakA$, $\delta \tau_0 \in T_{\pi(q_0)}(Q/G)$,
	\item and the fixed endpoint conditions: $\delta \tau_0 = 0$\hspace{.1pc}\footnote{Notice that $\delta {\tau}_{0}$ is just a convenient device to simplify the expression of the $\delta v_k$.}, $\delta \tau_N = 0$, $\xi_0 = \xi_N = 0$.
\end{enumerate}
\end{enumerate}
\end{proposition}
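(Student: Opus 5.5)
The plan is to reduce everything to Lemma \ref{lemma:q trajectory iff vtau satisfies red var principle}. That lemma says that whenever $(\delta v_\cdot,\delta\tau_\cdot)$ is obtained from some $\delta q_\cdot$ by $\delta\tau_k = T_{q_k}\pi(\delta q_k)$ and $\delta v_k = T\rho(\delta q_k, T\CD(\delta q_k,\delta q_{k+1}))$, we have $dS_d(q_\cdot)(\delta q_\cdot) = d\hat S_d(v_\cdot,\tau_\cdot)(\delta v_\cdot,\delta\tau_\cdot)$. So it suffices to show a correspondence between two sets:
\begin{itemize}
\item infinitesimal variations $\delta q_\cdot$ of $q_\cdot$ with fixed endpoints;
\item reduced variations $(\delta v_\cdot,\delta\tau_\cdot)$ of the form \eqref{eq:delta_vk-def} satisfying (a)--(c).
\end{itemize}
We need each element of the first set to map onto the second set, and every element of the second set to arise in this way from some element of the first.

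The key step uses the principal connection $\frakA$ and the splitting $TQ = \mathcal V^G\oplus \Hor_\frakA$. Given $\delta q_k\in T_{q_k}Q$, decompose
\[
\delta q_k = \HLc{q_k}(\delta\tau_k) + (\xi_k)_Q(q_k),
\qquad \delta\tau_k := T_{q_k}\pi(\delta q_k),\quad \xi_k := \frakA(\delta q_k)\in\frakg .
\]
This works because the action is free, so $\xi\mapsto \xi_Q(q_k)$ is an isomorphism $\frakg\to\mathcal V^G_{q_k}$. The decomposition holds for all $k=0,\dots,N$; in particular $\delta\tau_0$ is defined this way too. Conversely, any data $(\delta\tau_0,\dots,\delta\tau_N)$ with $\delta\tau_k\in T_{\tau_k}(Q/G)$, together with $\xi_\cdot\in\frakg^{N+1}$, determine $\delta q_k := \HLc{q_k}(\delta\tau_k)+(\xi_k)_Q(q_k)$. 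So the two parametrizations are in bijection.

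Next, the maps $T_{(q_k,q_{k+1})}\CD$ and $T_{(q_k,w_k)}\rho$ are linear. Substituting the decomposition into $\delta v_k = T\rho(\delta q_k, T\CD(\delta q_k,\delta q_{k+1}))$ and splitting by linearity gives exactly the two summands of \eqref{eq:delta_vk-def}. Also $\delta\tau_k = T\pi(\delta q_k)$, since the vertical part is killed by $T\pi$ and $T\pi\circ \HLc{q_k} = \id$.

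For the endpoint conditions, $\delta q_0 = 0$ holds if and only if $\delta\tau_0=0$ and $\xi_0=0$, because the splitting is a direct sum and $\xi\mapsto\xi_Q(q_0)$ is injective. The same argument at $k=N$ gives $\delta q_N=0$ if and only if $\delta\tau_N=0$ and $\xi_N=0$. Hence fixed-endpoint variations $\delta q_\cdot$ correspond bijectively to admissible reduced variations as in (2).

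With this correspondence, both implications follow from Lemma \ref{lemma:q trajectory iff vtau satisfies red var principle}.
\begin{itemize}
\item (1)$\Rightarrow$(2): given an admissible $(\delta v_\cdot,\delta\tau_\cdot)$, build $\delta q_\cdot$ as above. It has fixed endpoints and induces exactly $(\delta v_\cdot,\delta\tau_\cdot)$, so $d\hat S_d(v_\cdot,\tau_\cdot)(\delta v_\cdot,\delta\tau_\cdot)=dS_d(q_\cdot)(\delta q_\cdot)=0$.
\item (2)$\Rightarrow$(1): given a fixed-endpoint $\delta q_\cdot$, decompose it as above. The induced variation is admissible, so $dS_d(q_\cdot)(\delta q_\cdot)=d\hat S_d(v_\cdot,\tau_\cdot)(\delta v_\cdot,\delta\tau_\cdot)=0$.
\end{itemize}

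The only delicate point is bookkeeping. The formula for $\delta v_k$ must be read with $\HLc{q_k}(\delta\tau_k)$ taken at the specific point $q_k$ of the given curve; that is well defined since the $q_k$ are fixed. The term $\delta\tau_0$ enters only through $\delta v_0$, which is why it must be included as a parameter and then set to zero. There is no genuine analytic obstacle; the argument is linear algebra fiberwise plus Lemma \ref{lemma:q trajectory iff vtau satisfies red var principle}.
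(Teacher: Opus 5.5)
Your proposal is correct and takes essentially the same route as the paper. You split each $\delta q_k$ as $h^{q_k}(\delta\tau_k)+(\xi_k)_Q(q_k)$ using the principal connection $\frakA$, note that the induced reduced variations are exactly those in (2) with the stated endpoint conditions, and get both implications from Lemma \ref{lemma:q trajectory iff vtau satisfies red var principle}; your explicit checks of linearity and of $\delta q_0=0 \iff (\delta\tau_0,\xi_0)=(0,0)$ just fill in details the paper leaves implicit.
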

\begin{proof}
\leavevmode

{\it 2} $\Lra$ {\it 1}. Given an infinitesimal variation $\delta q.$ as in the statement, define $\delta \tau_k := T_{q_k}\pi(\delta q_k)$ and $\delta v_k$ using \eqref{eq:delta_vk-def} for $\xi_k := \frakA(q_k)(\delta q_k)$, so that $\delta q_k = h^{q_k}(\delta \tau_k) + (\xi_k)_Q(q_k)$. Noticing that $(\delta v., \delta \tau.)$ satisfies the conditions of item {\it (2)}, Lemma \ref{lemma:q trajectory iff vtau satisfies red var principle} implies that $dS_d(q.)(\delta q.) = 0$.

{\it 1} $\Lra$ {\it 2}. Given $(\delta v., \delta \tau.)$ as in the statement, let $\delta q_k := h^{q_k}(\delta \tau_k) + (\xi_k)_Q(q_k)$. Since the infinitesimal variation $\delta q.$ has fixed endpoints, Lemma \ref{lemma:q trajectory iff vtau satisfies red var principle} implies that $d\hat{S}_d(v.,\tau.)(\delta v., \delta \tau.) = 0$.
\end{proof}

Finally, we establish a reduction process of a $G$-symmetry of the DMS $(Q,L_d)$ that turns out to be a discrete version of the Lagrangian reduction theorem presented in \cite{ar:cendra_marsden_ratiu:2001:geometric_mechanics_lagrangian_reduction_and_nonholonomic_systems}.

\begin{remark}\label{remark:notation_lie_groups}
The analysis of symmetric systems leads us to consider objects defined on a Lie group $G$. The statement of the following theorem requires a convenient notation used in this context, which we briefly review now. Given $w_0,w_1 \in G$ and $\delta w_0 \in T_{w_0}G$, we define
\[
w_1 \delta w_0 := T_{w_0}L_{w_1} (\delta w_0) \in T_{w_1 w_0}G, \quad \delta w_0 w_1 := T_{w_0}R_{w_1}(\delta w_0) \in T_{w_0 w_1}G,
\]
where $L_{w_1}$ and $R_{w_1}$ denote the left and right translation by $w_1$, respectively. Analogously, when $\alpha_0 \in T_{w_0}^*G$,
\[
w_1 \alpha_0 := (L_{w_1})^*(\alpha_0) \in T_{w_1 w_0}^*G, \quad \alpha_0 w_1 := (R_{w_1})^*(\alpha_0) \in T_{w_0 w_1}^*G.
\]
\end{remark}

\begin{theorem}\label{th:4_points-general} Given a DMS $(Q,L_d)$ with symmetry group $\SG$, let $q_\cdot := (q_0,\ldots,q_N)$ be a discrete curve on $Q$ and $\tau_k:=\pi(q_k)$, $w_k:=\CD(q_k,q_{k+1})$ and $v_k:=\rho(q_k,w_k)$ be the corresponding discrete curves on $Q/\SG$, $\SG$ and $\ti{\SG}$, respectively. Then, the following statements are equivalent:
\begin{enumerate}
\item \label{it:var_pple-general} $q_\cdot$ satisfies the variational principle $dS_d(q_\cdot)(\delta q_\cdot) = 0$ for all infinitesimal variations with fixed endpoints $\delta q_\cdot$ of $q_\cdot$.

\item \label{it:eq_lda-general} $q_\cdot$ satisfies the discrete Euler--Lagrange equations \eqref{dELe}.

\item \label{it:red_var_pple-general} $(v.,\tau_\cdot)$ satisfies the variational principle
\[
d\hat{S}_d(v.,\tau_\cdot) (\delta v., \delta \tau_\cdot) = 0
\]
for every infinitesimal variation $(\delta v_\cdot, \delta \tau_\cdot)$ and $\xi. := (\xi_0,\ldots,\xi_N) \in \frakg^{N+1}$ satisfying:
\begin{enumerate}
	\item $\delta \tau_k \in T_{\tau_k}(Q/G)$ for all $k = 1,\ldots,N$,
	\item for $k = 0,\ldots,N-1$,
\begin{equation*}
\begin{split}
\delta v_k :=& \; T_{(q_k,w_k)}\rho\big(\HLc{q_k}(\delta \tau_k),
T_{(q_k,q_{k+1})}\CD(\HLc{q_k}(\delta \tau_k),\HLc{q_{k+1}}(\delta
\tau_{k+1}))\big) \\
& + T_{(q_k,w_k)}\rho\big((\xi_k)_Q(q_k),
T_{(q_k,q_{k+1})}\CD((\xi_k)_Q(q_k),(\xi_{k+1})_Q(q_{k+1}))\big),
\end{split}
\end{equation*}
where $h$ is the horizontal lift associated to $\frakA$, $\delta \tau_0 \in T_{\pi(q_0)}(Q/G)$,
	\item and the fixed endpoint conditions: $\delta \tau_0 = 0$\hspace{.1pc}\footnote{Notice that $\delta {\tau}_{0}$ is just a convenient device to simplify the expression of the $\delta v_k$.}, $\delta \tau_N = 0$, $\xi_0 = \xi_N = 0$.
\end{enumerate}

\item \label{it:red_lda_eq-general}
$(v.,\tau_\cdot)$ satisfies the following conditions for each $(v_{k-1},\tau_k,v_k,\tau_{k+1})$:
\begin{itemize}
\item $\phi = 0$ for $\phi \in T_{\tau_k}^*(Q/G)$ defined by
\[
\begin{split}
\phi &:= D_1 \check{L}_d(q_k,w_k,\tau_{k+1}) \circ \HLc{q_k} + D_3 \check{L}_d(q_{k-1},w_{k-1},\tau_{k}) \\
& \ \ \ + D_2 \check{L}(q_k,w_k,\tau_{k+1}) D_1\calA_d(q_k,q_{k+1}) \circ h^{q_k} \\
& \ \ \ + D_2 \check{L}(q_{k-1},w_{k-1},\tau_k) D_2\calA_d(q_{k-1},q_k) \circ h^{q_k}.
\end{split}
\]

\item $\psi(\xi_k) = 0$ for all $\xi_k \in \frakg$, where $\psi \in \frakg^*$ is defined by
\[
\psi := D_2\check{L}_d(q_{k-1},w_{k-1},\tau_{k}) w_{k-1}^{-1} - D_2\check{L}_d(q_k,w_k,\tau_{k+1}) w_k^{-1},
\]
where we are using the notation described in Remark \ref{remark:notation_lie_groups}.
\end{itemize}
\end{enumerate}
\end{theorem}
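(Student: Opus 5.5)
The equivalences (1)$\Leftrightarrow$(2) and (1)$\Leftrightarrow$(3) are already available: the first is Theorem \ref{DEE-L}, the second is Proposition \ref{prop:q trajectory iff vtau satisfies red var principle}. So the work is to show (3)$\Leftrightarrow$(4). I would do this by expanding $d\hat{S}_d(v.,\tau.)(\delta v.,\delta\tau.)$ explicitly and collecting terms by $\delta\tau_k$ and by $\xi_k$.

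First, pull the computation back to $Q\times G\times(Q/G)$. Since $\hat{L}_d\circ(\rho\times\id_{Q/G})=\check{L}_d$ and $\delta v_k = T\rho(\cdot,\cdot)$, each summand becomes
\[
d\check{L}_d(q_k,w_k,\tau_{k+1})(\delta q_k,\delta w_k,\delta\tau_{k+1}).
\]
Here $\delta q_k=h^{q_k}(\delta\tau_k)+(\xi_k)_Q(q_k)$ and $\delta w_k = D_1\calA_d(q_k,q_{k+1})\delta q_k + D_2\calA_d(q_k,q_{k+1})\delta q_{k+1}$. Using the product decomposition of Section \ref{subsection:preliminaries_on_product_manifolds}, this equals
\[
D_1\check{L}_d\,\delta q_k+D_2\check{L}_d\,\delta w_k+D_3\check{L}_d\,\delta\tau_{k+1}.
\]

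Second, simplify the vertical pieces using the invariance properties. The key observation is that $\check{L}_d$ is $G$-invariant under $(q,w,\tau)\mapsto(l_g^Q q, gwg^{-1},\tau)$. Differentiating at $g=\exp(t\xi)$ gives
\[
D_1\check{L}_d(q,w,\tau)\,\xi_Q(q)+D_2\check{L}_d(q,w,\tau)(\xi w-w\xi)=0 .
\]
Next, differentiate the equivariance \eqref{eq:calAd_equivariance} with independent $g_0,g_1$. This gives
\[
D_1\calA_d(q_0,q_1)(\xi_Q(q_0))=-w\xi, \qquad D_2\calA_d(q_0,q_1)(\eta_Q(q_1))=\eta w .
\]
Substituting these into the $\xi$-parts of the $k$-th summand leaves two contributions:
\[
D_2\check{L}_d(q_k,w_k,\tau_{k+1})(-\xi_k w_k)\quad\text{(from the }\xi_k\text{ terms)},\qquad D_2\check{L}_d(q_k,w_k,\tau_{k+1})(\xi_{k+1}w_k).
\]
The $\xi_k$ terms collapse to the first expression after combining the $D_1\check{L}_d$ term with the invariance identity. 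In the notation of Remark \ref{remark:notation_lie_groups} these read $-(D_2\check{L}_d\, w_k^{-1})(\xi_k)$ and $(D_2\check{L}_d\, w_k^{-1})(\xi_{k+1})$, after translating to the identity.

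Third, reindex the sum and apply the endpoint conditions $\xi_0=\xi_N=0$ and $\delta\tau_0=\delta\tau_N=0$. The coefficient of $\xi_k$, for $1\le k\le N-1$, comes from the $(k-1)$-th and $k$-th summands and is exactly $\psi$. The coefficient of $\delta\tau_k$ collects four terms, and these give exactly $\phi$:
\begin{itemize}
\item $D_1\check{L}_d(q_k,\ldots)\circ h^{q_k}$ and $D_2\check{L}_d\, D_1\calA_d(q_k,q_{k+1})\circ h^{q_k}$ from summand $k$;
\item $D_3\check{L}_d(q_{k-1},\ldots)$ and $D_2\check{L}_d\, D_2\calA_d(q_{k-1},q_k)\circ h^{q_k}$ from summand $k-1$.
\end{itemize}
Hence
\[
d\hat{S}_d(\delta v.,\delta\tau.)=\sum_{k=1}^{N-1}\bigl(\phi_k(\delta\tau_k)+\psi_k(\xi_k)\bigr).
\]
The $\delta\tau_k\in T_{\tau_k}(Q/G)$ and $\xi_k\in\frakg$ for interior $k$ are arbitrary and independent, so this vanishes for all admissible variations if and only if every $\phi_k=0$ and every $\psi_k=0$. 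That is (3)$\Leftrightarrow$(4).

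The main obstacle is the second step. It requires careful bookkeeping of the derivatives of the equivariance identity and of the $G$-invariance of $\check{L}_d$ under conjugation, including the sign conventions. Only after that bookkeeping does the vertical part collapse to the clean form $\psi$, with no residual $D_1\check{L}_d\,\xi_Q$ term. I would first verify the identity $D_1\check{L}_d\,\xi_Q+D_2\check{L}_d(\xi w-w\xi)=0$ and the two formulas for $D_i\calA_d$ on vertical vectors, and then substitute.
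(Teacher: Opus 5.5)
Your proposal is correct and follows essentially the same route as the paper: (1)$\Leftrightarrow$(2) and (1)$\Leftrightarrow$(3) are quoted from earlier results. The equivalence (3)$\Leftrightarrow$(4) is obtained by pulling $d\hat{S}_d$ back to $Q\times G\times(Q/G)$, then using the $G$-invariance of $\check{L}_d$ and the $G\times G$-equivariance \eqref{eq:calAd_equivariance} of $\calA_d$ (correctly read with independent $g_0,g_1$) to collapse the vertical terms to $D_2\check{L}_d(q_k,w_k,\tau_{k+1})(\xi_{k+1}w_k-\xi_k w_k)$, and finally reindexing.
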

\begin{proof}
\leavevmode

{\it 1} $\Longleftrightarrow$ {\it 2}. It is a straightforward result that can be found, for example, in \cite[Part I]{ar:marsden_west:2001:discrete_mechanics_and_variational_integrators}.

{\it 3} $\Longleftrightarrow$ {\it 1}. It is Proposition \ref{prop:q trajectory iff vtau satisfies red var principle}.

{\it 3} $\Longleftrightarrow$ {\it 4}. For infinitesimal variations
  $(\delta v_\cdot,\delta \tau_\cdot)$, writing
  $\delta v_k = T_{(q_k,w_k)} \rho (\delta q_k , \delta w_k)$ and
  following the proof of Theorem 5.11 of \cite{ar:fernandez_tori_zuccalli:2010:lagrangian_reduction_of_discrete_mechanical_systems}, we obtain
  \begin{equation}\label{dShat}
    \begin{split}
      d\hat{S}_d(v_\cdot,\tau_\cdot)&(\delta v_\cdot,\delta \tau_\cdot) = \sum_{k=1}^{N-1} \left( D_1 \check{L}_d(q_k,w_k,\tau_{k+1}) \circ h^{q_k} + D_3 \check{L}_d(q_{k-1},w_{k-1},\tau_k) \right. \\
      & + D_2 \check{L}_d(q_k,w_k,\tau_{k+1}) D_1\calA_d(q_k,q_{k+1}) \circ h^{q_k} \\
      & + \left. D_2 \check{L}_d(q_{k-1},w_{k-1},\tau_k) D_2\calA_d(q_{k-1},q_k) \circ h^{q_k} \right) (\delta \tau_k) \\
      & + \sum_{k=0}^{N-1} \left( D_1\check{L}_d(q_k,w_k,\tau_{k+1}) \left( (\xi_k)_Q(q_k) \right) \right. \\
      & + \left. D_2\check{L}_d(q_k,w_k,\tau_{k+1}) T_{(q_k,q_{k+1})}
        \calA_d \left( (\xi_k)_Q(q_k) , (\xi_{k+1})_Q(q_{k+1}) \right)
      \right).
    \end{split}
  \end{equation}
	
Since the variations $\delta \tau_\cdot$ are independent of those generated by $\xi.$, we have that point $3$ in the statement is equivalent to the vanishing of the summation involving $\delta \tau_\cdot$ and the one involving $\xi.$ independently. The first of these conditions, since the $\delta \tau_\cdot$ are free, is equivalent to $\phi$ being identically zero.

For the second summation, we first perform the following computations:
\[
\begin{split}
T_{(q_k,q_{k+1})}\calA_d &\left( (\xi_k)_Q(q_k) , (\xi_{k+1})_Q(q_{k+1}) \right) \\
&= \frac{d}{dt} \bigg|_{t=0} \left( \exp(t \xi_{k+1}) \calA_{d}(q_k,q_{k+1}) \exp(-t \xi_k) \right) \\
&= \xi_{k+1} w_k - w_k \xi_k,
\end{split}
\]
where we are using the notation described in Remark \ref{remark:notation_lie_groups}.

On the other hand, since $\check{L}_d(l_g^Q(q_k),w_k,\tau_{k+1}) = \check{L}_d(q_k,l_{g^{-1}}^G(w_k),\tau_{k+1})$,
\[
\begin{split}
D_1\check{L}_d(q_k,w_k,\tau_{k+1}) \left( (\xi_k)_Q(q_k) \right) &= \frac{d}{dt} \bigg|_{t=0} \check{L}_d(l_{\exp(t \xi_k)}^Q(q_k),w_k,\tau_{k+1}) \\
&= D_2 \check{L}_d(q_k,w_k,\tau_{k+1}) \frac{d}{dt} \bigg|_{t=0} (\exp(-t \xi_k) w_k \exp(t\xi_k)) \\
&= D_2 \check{L}_d(q_k,w_k,\tau_{k+1}) (-\xi_k w_k + w_k \xi_k).
\end{split}
\]

Using both computations, we conclude that the vanishing of the second summation in \eqref{dShat} is equivalent to
\[
\begin{split}
0 &= \sum_{k=1}^{N-1} D_2 \check{L}_d(q_k,w_k,\tau_{k+1}) (\xi_{k+1} w_k - \xi_k w_k) \\
&= \sum_{k=1}^{N-1} \left( D_2 \check{L}_d(q_{k-1},w_{k-1},\tau_k) w_{k-1}^{-1} - D_2 \check{L}_d(q_k,w_k,\tau_{k+1}) w_k^{-1} \right) (\xi_k).
\end{split}
\]
\end{proof}

\begin{remark}
The previous result was presented in \cite{ar:caruso_fernandez_tori_zuccalli:2026:remarks_on_structures_and_preservation_in_forced_discrete_mechanical_systems_of_Routh_type} as a particular case of the reduction process for forced discrete mechanical systems considered in \cite[Theorem 3.4]{ar:caruso_fernandez_tori_zuccalli:2023:lagrangian_reduction_of_forced_discrete_mechanical_systems}. As we will see later, this process can also be considered as a particular case of the reduction process for discrete mechanical systems with nonholonomic constraints developed in \cite{ar:fernandez_tori_zuccalli:2010:lagrangian_reduction_of_discrete_mechanical_systems}, considering the kinematic constraints $\mathcal{D}_{d}=Q \times Q$ and the variational constraints $\mathcal{D}=TQ$, as well as a special case of the reduction by stages process discussed in Section \ref{section:reduction by stages}.
\end{remark}

\begin{remark}
The reduced system we obtained in Theorem \ref{th:4_points-general} usually fails to be a DMS, since the quotient manifold $(Q \times Q)/G$ and its diffeomorphic model $\tilde{G} \times Q/G$ are not necessarily a product manifold of the form $M \times M$. An immediate consequence of this fact is that we cannot perform a second reduction process if there is a residual symmetry. This limitation is overcome constructing a suitable category whose objects are general enough to include both discrete mechanical systems and the dynamical systems obtained by the reduction process considered in Theorem \ref{th:4_points-general}. This will be the content of Section \ref{section:reduction by stages}.
\end{remark}

\begin{remark}
Theorem \ref{th:4_points-general} starts from a path $q_\cdot$ and constructs the reduced paths $\tau_\cdot$ and $v_\cdot$. A reverse process can also be considered. Given paths $\tau_\cdot$ and $v_\cdot$ such that $\pr^{Q/G}(v_{k+1}) = \tau_{k+1}$, they can be lifted to a path $q_\cdot$ that is unique once we fix $q_0$. Combining this reconstruction process with Theorem \ref{th:4_points-general}, one has on one hand a discrete curve $q.$ and on the other a curve $(v.,\tau.)$ (regardless of where one started) that satisfy that one is a trajectory if and only if the other one is a trajectory. A detailed discussion can be found in \cite[Section 7]{ar:fernandez_tori_zuccalli:2010:lagrangian_reduction_of_discrete_mechanical_systems}.
\end{remark}



\subsection{Discrete Routh reduction}

In this setting, we will consider the well--known Routh reduction, a reduction process using the preservation of the momentum map.

\subsubsection{Momentum map and connection}

A $G$-symmetry of a DMS $(Q,L_d)$ can be used to define a discrete momentum map that possesses interesting properties. Now, we recall some of them, considered in \cite{ar:marsden_west:2001:discrete_mechanics_and_variational_integrators,ar:fernandez_tori_zuccalli:2010:lagrangian_reduction_of_discrete_mechanical_systems} (see also \cite[Proposition 3.7]{ar:caruso_fernandez_tori_zuccalli:2026:remarks_on_structures_and_preservation_in_forced_discrete_mechanical_systems_of_Routh_type}).

\begin{proposition}\label{momentun map}
Let $G$ be a symmetry group of the DMS $(Q,L_d)$ and let $J_d$ be its associated discrete momentum map. Then,
\begin{enumerate}
\item $J_d$ is $G$-equivariant with respect to the diagonal action and the coadjoint action $Ad^{*}$ on $\frak g^*$. That is, $ J_d(l_g^{Q\times Q}(q_0,q_1))= Ad^{*}_{g^{-1}} (J_d(q_0, q_1))$ for every $g\in G$ and every $(q_0,q_1) \in Q \times Q$.

\item If $L_d$ is regular and $\mu \in \frak g^*$ is a regular value of $J_d$, the subset $J_d^{-1}(\mu)\subs Q\times Q$ is a submanifold of $Q\times Q$.
\end{enumerate}
\end{proposition}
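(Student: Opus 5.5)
For item (1), the plan is to differentiate the invariance identity for $L_d$ and use how the action transforms infinitesimal generators. Fix $g\in G$ and $\xi\in\frakg$. By definition,
\[
J_d(l^Q_g(q_0),l^Q_g(q_1))(\xi) = -D_1L_d(l^Q_g(q_0),l^Q_g(q_1))\,\xi_Q(l^Q_g(q_0)).
\]
The first key step is the standard identity $\xi_Q(l^Q_g(q)) = T_q l^Q_g\big((Ad_{g^{-1}}\xi)_Q(q)\big)$. It follows by differentiating $l^Q_{\exp(t\xi)} l^Q_g = l^Q_g\, l^Q_{g^{-1}\exp(t\xi) g}$ at $t=0$.

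The second step is to differentiate the invariance $L_d\circ (l^Q_g\times l^Q_g) = L_d$ in the first slot only. Since the map is a product, this gives
\[
D_1L_d(l^Q_g(q_0),l^Q_g(q_1))\circ T_{q_0}l^Q_g = D_1L_d(q_0,q_1).
\]
Combining the two identities yields
\[
J_d(l^{Q\times Q}_g(q_0,q_1))(\xi) = J_d(q_0,q_1)(Ad_{g^{-1}}\xi) = \big(Ad^*_{g^{-1}}J_d(q_0,q_1)\big)(\xi),
\]
which is the claimed equivariance, with the convention that $Ad^*_{g^{-1}}$ is the dual of $Ad_{g^{-1}}$. The only care needed here is to get the coadjoint convention right, and to check that the splitting $T(Q\times Q)=TQ^-\oplus TQ^+$ is preserved by the diagonal action. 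It is, because the diagonal action is a product map.

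For item (2), the plan is to apply the regular value (preimage) theorem to the smooth map $J_d : Q\times Q\lra \frakg^*$. Smoothness follows from that of $L_d$ and of the action. By definition, $\mu$ being a regular value means $T_{(q_0,q_1)}J_d$ is surjective at every point of $J_d^{-1}(\mu)$. The preimage theorem then gives directly that $J_d^{-1}(\mu)$ is an embedded submanifold of codimension $\dim\frakg$, or that it is empty.

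Regularity of $L_d$ is not really needed for this conclusion. I would remark that its role is in making regular values of $J_d$ plentiful: via $\ff^-L_d$, which is a local diffeomorphism, $J_d$ factors locally as the cotangent momentum map composed with $\ff^-L_d$. The cotangent momentum map is a submersion wherever the action is locally free, and the action here is free. So under regularity every value is in fact regular. I expect this factorization remark to be the main point worth checking, but it is optional for the statement as written.
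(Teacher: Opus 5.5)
Your proposal is correct. The paper gives no proof of this proposition; it only defers to Marsden--West and to the authors' earlier papers. In that literature, (1) is proved essentially as you do it: either by differentiating the invariance of $L_d$ in the first slot together with $\xi_Q(l^Q_g(q)) = T_q l^Q_g\big((Ad_{g^{-1}}\xi)_Q(q)\big)$, or, equivalently, by writing $J_d = J_{T^*Q}\circ \ff^-L_d$ with $\ff^-L_d$ equivariant and $J_{T^*Q}$ the canonical cotangent momentum map. Item (2) is just the preimage theorem.

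Your side remark is also right, and it slightly sharpens the statement; note that the factorization $J_d = J_{T^*Q}\circ\ff^-L_d$ holds globally, not only locally. The reason is the following.
\begin{itemize}
\item Regularity of $L_d$ makes $T_{q_1}\phi_{q_0}$ an isomorphism.
\item Freeness of the action makes $\xi\mapsto \xi_Q(q_0)$ injective, so its dual is surjective.
\item $D_2J_d(q_0,q_1)$ is the composition of $T_{q_1}\phi_{q_0}$ with that surjective dual, hence surjective.
\end{itemize}
So $J_d$ is a submersion under the standing hypotheses, and the regular-value assumption in (2) is automatic.
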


The Discrete Noether's Theorem assures that any trajectory $q.$ of a DMS $(Q,L_d)$ such that $J_d(q_0,q_1)=\mu$ must be contained in the submanifold $J_d^{-1}(\{ \mu \}) \subs Q\times Q$. Notice that $J_d^{-1}(\{ \mu \})$ is not $G$-invariant, but $G_\mu$-invariant, where $G_\mu$ is the isotropy subgroup of $\mu$, given by
\[
G_{\mu} :=\{g\in  G:Ad^*_{g^{-1}}(\mu) = \mu\}\subs G.
\]

It is therefore to be expected that one can consider the action of $G_\mu$ and study its possible reduction. In addition, $G_\mu$ is a symmetry group of $(Q,L_d)$ that acts freely and properly on $Q$ (because $G$ does) and, then, we can consider the smooth manifold $Q/G_{\mu}$ and the principal bundle $\pi_\mu : Q \lra Q/G_\mu$, with structure group $G_{\mu}$. The key idea is to analyze if the submanifold $J_d^{-1}(\{ \mu \})$ can be the horizontal submanifold of an affine discrete connection on $\pi_\mu$.

Let us consider the discrete momentum map associated to this $G_{\mu}$-symmetry, $J_\mu : Q \times Q \lra \frakg_\mu^*$, which is given by $ J_\mu(q_0,q_1)\xi := -D_1 L_d(q_0,q_1)\xi_Q(q_0)$ for all $\xi \in \frakg_\mu$, where $\frakg_\mu := \Lie(G_{\mu})$. We will see that if the $G_{\mu}$-symmetry satisfies certain conditions, then the submanifold $J_\mu^{-1}(\{ \mu \})$ defines an affine discrete connection on the $G_{\mu}$-principal bundle $\pi_\mu$. In order to do this, we will assume that the discrete Lagrangian $L_d$ satisfies an additional regularity condition which is usually defined as follows.

\begin{definition}
Given a symmetry group $H$ of a DMS $(Q,L_d)$, it is said that $L_d$ is \emph{$H$-regular} in $(q_0,q_1)\in Q\times Q$ if the restriction of its associated bilinear form $D_2 D_1 L_d (q_0,q_1): T_{q_0}Q\times T_{q_1}Q \lra \rr$ to $\mathcal{V}^H_{q_0}\times \mathcal{V}^H_{q_1}$ is nondegenerate.
\end{definition}

We are going to define a certain type of symmetries that guarantee that the submanifold $J_\mu^{-1}(\mu)$ defines an affine discrete connection on the principal bundle $\pi_{\mu}$.

\begin{definition}\label{def:mu_good_symmetry}
Let $G_\mu$ be a symmetry group of a DMS $(Q,L_d)$ such that the discrete Lagrangian $L_d$ is regular and $G_\mu$-regular. We say that $G_\mu$ is a \emph{group of $\mu$-good symmetries} if the following conditions are satisfied.
\begin{enumerate}
	\item For every $q \in Q$ there exists a unique $g \in G_\mu$ such that $J_\mu(q,l^Q_g(q)) = \mu$.

	\item If $J_d(q_0,q_1) = \mu = J_d(q_0,l^Q_g(q_1))$ for any $(q_0,q_1) \in Q \times Q$ and $g \in G_\mu$, then $g = e$.
\end{enumerate}

In this case, we define $\gamma(q) := g$ (so that $\widetilde{\gamma}(q) := l^Q_g(q)$).
\end{definition}

\begin{remark}
The definition of $\mu$-good symmetry given in Definition 11.10 of \cite{ar:fernandez_tori_zuccalli:2010:lagrangian_reduction_of_discrete_mechanical_systems} does not require condition (2) above. The addition is related to what we discussed in Remark \ref{rmk:differences_in_definitions_discrete_connections}.
\end{remark}

Proposition 11.11 of \cite{ar:fernandez_tori_zuccalli:2010:lagrangian_reduction_of_discrete_mechanical_systems} provides a proof of the following result\footnote{Of course, the proof of Proposition 11.11 of \cite{ar:fernandez_tori_zuccalli:2010:lagrangian_reduction_of_discrete_mechanical_systems} is based on the notion of affine discrete connection used in that paper. Still, adding the extra condition (2) from Definition \ref{def:mu_good_symmetry}, the thesis of the proposition remains valid for affine discrete connections in the sense of Definition \ref{def:affine_discrete_connection}.}, which implies that the $\mu$-good symmetries condition guarantees the existence of an affine discrete connection on the principal bundle $\pi_{\mu}$ . 

\begin{proposition}\label{prop:mu_good_imp_aff_conn}
Let $G_\mu$ be a group of $\mu$-good symmetries of $(Q,L_d)$. Then,
for every $(q_0,q_1)\in Q \times Q$ there exists a unique $g\in G_\mu$ such that $ J_\mu(q_0,l^Q_{g^{-1}}q_1)=\mu$. Then, $\calJ_\mu := J_\mu^{-1}(\{ \mu \})$ is an affine discrete connection in $\pi_\mu : Q \lra Q/G_\mu$, with level $\gamma$ (given in Definition \ref{def:mu_good_symmetry}).
\end{proposition}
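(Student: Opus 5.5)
Write $\calJ_\mu := J_\mu^{-1}(\{\mu\})$. The plan is to check Definition \ref{def:affine_discrete_connection} directly for $\Hor := \calJ_\mu$ on the principal $G_\mu$-bundle $\pi_\mu$. There are four things to verify: $\calJ_\mu$ is an embedded submanifold; it is $G_\mu$-invariant for the diagonal action; it contains the graph $\Gamma$ of $\widetilde{\gamma}$ for a level function $\gamma$; and $(\id_Q\times\pi_\mu)|_{\calJ_\mu}$ is an injective local diffeomorphism onto $Q\times (Q/G_\mu)$. Along the way I will get the first claim, existence and uniqueness of $g$ with $J_\mu(q_0,l^Q_{g^{-1}}q_1)=\mu$, since it is exactly surjectivity plus injectivity of that map.

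\textbf{Invariance.} Proposition \ref{momentun map}(1), restricted to $G_\mu$ and $\frakg_\mu$, gives $J_\mu(l_g^{Q\times Q}(q_0,q_1)) = Ad^*_{g^{-1}}J_\mu(q_0,q_1)$. Since $g\in G_\mu$ fixes $\mu$, the level set $\calJ_\mu$ is $G_\mu$-invariant.

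\textbf{Level function and $\Gamma$.} Definition \ref{def:mu_good_symmetry}(1) gives $\gamma(q)$ with $(q,\widetilde{\gamma}(q))\in\calJ_\mu$, so $\Gamma\subs\calJ_\mu$. For the equivariance $\gamma(l_h q)=h\gamma(q)h^{-1}$, note that $(l_hq, l_h l_{\gamma(q)}q) = (l_hq, l_{h\gamma(q)h^{-1}}l_hq)$ lies in $\calJ_\mu$ by invariance. Uniqueness in condition (1) then forces the identity. Smoothness of $\gamma$ I would obtain from the implicit function theorem applied to $(q,g)\mapsto J_\mu(q,l_g q)$. Its derivative in $g$ along $\xi\in\frakg_\mu$ is $-D_2D_1L_d(q,l_gq)(\eta_Q(q),\xi_Q(l_gq))$ type terms, because the $\frakg_\mu$-pairing of $J_\mu$ involves $D_1L_d$ against vertical vectors at the first slot. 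That derivative is an isomorphism by $G_\mu$-regularity.

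\textbf{Main step.} This is the local diffeomorphism and bijectivity of $(\id_Q\times\pi_\mu)|_{\calJ_\mu}$.

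For surjectivity, the same transversality argument shows that for fixed $q_0$ the map $g\mapsto J_\mu(q_0,l_{g^{-1}}q_1)$ is a local diffeomorphism $G_\mu\to\frakg_\mu^*$, again using $G_\mu$-regularity of $L_d$ at every point. Global existence of a $g$ hitting $\mu$ is the real obstacle. I would try to use condition (1) at $q_0$ together with a continuation/connectedness argument along $\pi_\mu$-fibers. Failing that, I would follow Proposition 11.11 of \cite{ar:fernandez_tori_zuccalli:2010:lagrangian_reduction_of_discrete_mechanical_systems}, accepting, as the paper does, that the domain $\frakU$ may have to be taken as an open subset and assuming it is everything as per the paper's convention.

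For injectivity, suppose $(q_0,q_1)$ and $(q_0,l_gq_1)$ both lie in $\calJ_\mu$ with $g\in G_\mu$. Condition (2) of Definition \ref{def:mu_good_symmetry} gives $g=e$.

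For the submanifold and local diffeomorphism properties, $G_\mu$-regularity shows $\mu$ is a regular value of $J_\mu$, since the differential is already surjective along vertical directions in the second slot. Hence $\calJ_\mu$ is embedded of dimension $2n-\dim G_\mu$, matching $\dim Q\times Q/G_\mu$. Its tangent space meets $\ker T(\id_Q\times\pi_\mu)=0\oplus\calV^{G_\mu}$ trivially, again by nondegeneracy of $D_2D_1L_d$ on the vertical spaces. By dimension count the restriction is then a local diffeomorphism, completing the verification.
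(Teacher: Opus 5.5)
\textbf{Overall.} Everything you actually verify is correct. It is also the paper's route: the paper defers to Proposition 11.11 of \cite{ar:fernandez_tori_zuccalli:2010:lagrangian_reduction_of_discrete_mechanical_systems} and uses condition (2) only to get injectivity. Specifically:
\begin{itemize}
\item $\mu$ is a regular value of $J_\mu$, because by $G_\mu$-regularity $dJ_\mu$ restricted to $0\oplus\calV^{G_\mu}$ is already an isomorphism onto $\frakg_\mu^*$.
\item Invariance of $\calJ_\mu$ follows from equivariance of $J_\mu$.
\item Condition (1) and the implicit function theorem give $\Gamma\subs\calJ_\mu$ and a smooth, conjugation-equivariant $\gamma$.
\item Transversality to $0\oplus\calV^{G_\mu}$ and the dimension count give the local diffeomorphism.
\end{itemize}

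\textbf{Condition (2).} As written, condition (2) uses $J_d$, so it only controls pairs in $J_d^{-1}(\mu)\subs\calJ_\mu$, which can be strictly smaller when $G_\mu\neq G$. Your injectivity step uses the form $J_\mu(q_0,q_1)=\mu=J_\mu(q_0,l^Q_g(q_1))\Rightarrow g=e$. That is the version actually needed: it is exactly the extra condition of Remark \ref{rmk:differences_in_definitions_discrete_connections} for $\Hor=\calJ_\mu$. Say explicitly that this is how you read (2).

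\textbf{The open existence step.} You leave open the existence of $g$ for \emph{every} $(q_0,q_1)\in Q\times Q$. No continuation argument, or any other argument, can close it, because it does not follow from the hypotheses. Here is a counterexample.
\begin{itemize}
\item Take $Q=\rr^2$ and $G=\rr$ acting by $l_g(x,y)=(x+g,y)$, with $\mu=0$, so $G_\mu=G$.
\item Let $L_d((x_0,y_0),(x_1,y_1))=F(x_1-x_0)+(y_1-y_0)(x_1-x_0)$ with $F'=\arctan$. Then $J_\mu(q_0,q_1)=\arctan(x_1-x_0)+(y_1-y_0)$.
\item In coordinates, $D_2D_1L_d$ has determinant $-1$ and vertical entry $-1/(1+(x_1-x_0)^2)\neq 0$. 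So $L_d$ is regular and $G_\mu$-regular.
\item Condition (1) holds with $\gamma\equiv e$, and condition (2) holds because $\arctan$ is injective.
\item Yet if $y_1-y_0\geq\pi/2$, then $J_\mu(q_0,l^Q_{g^{-1}}(q_1))=\arctan(x_1-x_0-g)+(y_1-y_0)>0$ for every $g$.
\end{itemize}
Your observation that $g\mapsto J_\mu(q_0,l^Q_{g^{-1}}(q_1))$ is a local diffeomorphism is right. But its image need not contain $\mu$ once $\pi_\mu(q_1)\neq\pi_\mu(q_0)$.

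\textbf{What is actually true.} Uniqueness holds for all $(q_0,q_1)$, by (2). Existence holds exactly on $\frakU=l^{Q\times Q_2}_{G_\mu}(\calJ_\mu)=(\id_Q\times\pi_\mu)^{-1}\bigl((\id_Q\times\pi_\mu)(\calJ_\mu)\bigr)$. This set is open, since a local diffeomorphism is an open map; it is $G_\mu\times G_\mu$-invariant; and it contains $\Gamma$. In the example, $\frakU=\{|y_1-y_0|<\pi/2\}$.

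Definition \ref{def:affine_discrete_connection} asks only for an injective local diffeomorphism, not surjectivity. So this costs nothing for the second assertion, which your argument proves completely. The global wording of the first assertion is valid only under the paper's standing convention that affine discrete connections are defined on all of $Q\times Q$.

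Your ``fallback'' is therefore not a fallback but the correct statement. Drop the continuation idea and present it that way.
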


\begin{definition}
Given $\mu\in \frak g^*$, if $G_\mu$ is a group of $\mu$-good symmetries of $(Q,L_d)$, we define the map $\mathcal{A}_{\mu}:Q\times Q\lra G_{\mu}$ by $\mathcal{A}_{\mu}(q_0,q_1):=g$, where $g$ is the element of $G_\mu$ appearing in the previous result. This function is called \emph{affine discrete connection associated to the momentum map $J_\mu$} and its horizontal submanifold $\Hor_{\mathcal{A}_{\mu}}$ is $\calJ_\mu$.
\end{definition}

As mentioned at the beginning of this section, those trajectories contained in the horizontal submanifold of an affine discrete connection ${\mathcal{A}_{\mu}}$ for some $\mu \in  \frak g^*$ give rise to reduced trajectories of a DMS. We now perform the reduction procedure described in the previous section to remove the symmetry corresponding to the group $G_\mu$, obtaining a DMS on the reduced space $Q/G_\mu \times Q/G_\mu$.

\subsubsection{Reduced spaces, Lagrangians and forces}

In order to carry out the Routh reduction in the setting presented in the previous section, we will consider the spaces and diffeomorphisms defined before but associated to the action of the isotropy subgroup $G_\mu$, instead of $G$.

So, we consider a principal connection $\frakA$ on the principal bundle $\pi_{\mu}$ with horizontal subbundle $\Hor_\frakA$. Hence, $TQ = \mathcal{V}^{G_{\mu}} \oplus \Hor_\frak A$, where the \emph{vertical subbundle} $\mathcal{V}^{G_{\mu}}$ over $Q$ is the subbundle of $TQ$ whose fibers are $\mathcal{V}^{G_{\mu}}_q:=T_q(l^Q_{G_{\mu}}(\{q\}))$. The \emph{horizontal lift} associated to $\frakA$ is the function $h_{\mu} : Q \times_{Q/G_{\mu}} T(Q/G_{\mu}) \lra TQ$ defined by
$$h_{\mu}^q(w_{\pi(q)}) := v_q \ \text{if} \ v_q \in \Hor_\frak A(q) \ \text{and} \ T_q\pi_{\mu} ( v_q )= w_{\pi(q)}.$$

Considering the action of $G_{\mu}$ on $Q \times G_{\mu}$ given by $l_g^{Q \times G_{\mu}}(q,w) := (l_g^Q(q),l_g^{G_{\mu}}(w))$, with $l_g^{G_{\mu}}(w) := g w g^{-1}$, we define the quotient manifold $\widetilde{G_{\mu}} := (Q \times G_{\mu})/G_{\mu}$ and the canonical projection $\rho_{\mu} : Q \times G_{\mu} \lra \widetilde{G_{\mu}}$. Then, $\widetilde{G_{\mu}}$ is a fiber bundle over $Q/G_{\mu}$ called \emph{conjugated associated bundle}, whose projection is denoted $\pr_{Q/G_{\mu}}$.

Using the principal connection $\frakA$ and the affine discrete connection ${\mathcal{A}_{\mu}}$, we can apply the ideas of Section \ref{subsubsec:reduced_spaces_and_lagrangians_general} and obtain that the quotient manifold $(\Hor_{\mathcal{A}_{\mu}})/G_{\mu}$ and the product manifold $Q/G_\mu \times Q/G_\mu$ are diffeomorphic, a computation that can be seen in detail in \cite{ar:caruso_fernandez_tori_zuccalli:2026:remarks_on_structures_and_preservation_in_forced_discrete_mechanical_systems_of_Routh_type}. Therefore, we can consider the discrete Lagrangian $\displaystyle{\breve{L}_{\mu}} : Q/G_\mu \times Q/G_\mu \lra \rr$ and we will see that the dynamics of the reduced system corresponds to that of a system with external forces, a kind of system that will be studied in more detail in Section \ref{section:forced systems}.



\subsubsection{The reduction theorem}

Let us restate the general reduction theorem in terms of this $G_{\mu}$-symmetry and the affine discrete connection $\calA_\mu$ associated to the momentum map. To this end, we need a few lemmas that show how the statement of Theorem \ref{th:4_points-general} is modified with this choice of symmetry group and affine discrete connection. The proofs of these results can be found in Lemmas 10.4 and 10.5 of \cite{ar:fernandez_tori_zuccalli:2010:lagrangian_reduction_of_discrete_mechanical_systems}.

\begin{lemma}
Let $\calA_\mu$ be the affine discrete connection associated to the momentum map $J_\mu$, $\tau_0,\tau_1 \in Q/G$ and $q_0 \in \pi_\mu^{-1}(\tau_0)$. Then,
\begin{equation*}
d\breve{L}_\mu(\tau_0,\tau_1)(\delta \tau_0,\delta \tau_1) = D_1\check{L}_d(q_0,e,\tau_1)(h_\mu^{q_0}(\delta \tau_0)) + D_3\check{L}_d(q_0,e,\tau_1)(\delta \tau_1),
\end{equation*}
for every $(\delta \tau_0, \delta \tau_1) \in T_{(\tau_0,\tau_1)}(Q/G \times Q/G)$.
\end{lemma}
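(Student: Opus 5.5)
The plan is to first make $\breve{L}_\mu$ explicit in terms of $\check{L}_d$, and then differentiate that formula along a carefully chosen curve of lifts. Since $\Hor_{\calA_\mu}=\calJ_\mu$ is the set where $\calA_\mu=e$, the identification $(\Hor_{\calA_\mu})/G_\mu\simeq Q/G_\mu\times Q/G_\mu$ is the restriction of $\Phi_{\calA_\mu}$ to the slice $w=e$. Because $\rho_\mu(q_0,e)$ depends only on $\pi_\mu(q_0)$, the reduced Lagrangian is
\[
\breve{L}_\mu(\tau_0,\tau_1)=\hat{L}_d(\rho_\mu(q_0,e),\tau_1)=\check{L}_d(q_0,e,\tau_1)=L_d\big(h_d(q_0,\tau_1)\big),
\]
for any $q_0\in\pi_\mu^{-1}(\tau_0)$. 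I would first check that this is well defined, that is, independent of the choice of $q_0$. This follows from the $G_\mu$-invariance of $\check{L}_d$ together with $l_g^{G_\mu}(e)=e$.

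Next, fix $(\delta\tau_0,\delta\tau_1)$ and differentiate along a curve. Choose a curve $\tau_0(t)$ with derivative $\delta\tau_0$, and a curve $\tau_1(t)$ with derivative $\delta\tau_1$. Let $q_0(t)$ be the horizontal lift of $\tau_0(t)$ through $q_0$ with respect to the principal connection $\frakA$. Then $q_0'(0)=h_\mu^{q_0}(\delta\tau_0)$ and $\pi_\mu(q_0(t))=\tau_0(t)$. By the well-definedness shown above, $\breve{L}_\mu(\tau_0(t),\tau_1(t))=\check{L}_d(q_0(t),e,\tau_1(t))$ for all $t$. The chain rule on the product $Q\times G_\mu\times Q/G_\mu$ then gives
\[
D_1\check{L}_d(q_0,e,\tau_1)(h_\mu^{q_0}(\delta\tau_0))+D_2\check{L}_d(q_0,e,\tau_1)(0)+D_3\check{L}_d(q_0,e,\tau_1)(\delta\tau_1).
\]
The middle term vanishes because the $G_\mu$-component of the curve is constantly $e$. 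This is exactly the claimed formula.

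The only delicate point is the justification that the $G_\mu$-slot of the curve can be held at $e$ while $q_0$ moves. Concretely, this is the assertion that $(q_0(t),e,\tau_1(t))$ is a legitimate representative of $(\tau_0(t),\tau_1(t))$ under the diffeomorphism of Proposition \ref{affine-isom} restricted to $\Hor_{\calA_\mu}$. It holds because $\tilde{\Psi}_{\calA_\mu}(q,e,\tau)=h_d(q,\tau)\in\Hor_{\calA_\mu}$ for every $(q,\tau)$, using property (3) of Theorem \ref{thm:horizontal_lift_properties}, so every point of the slice $w=e$ lies in the horizontal submanifold. I expect this identification of the slice to be the main step to state carefully. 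Once it is in place, the choice of a horizontal lift is merely a convenience: any lift would give the same value, since the vertical contribution to $D_1\check{L}_d$ at $w=e$ is $D_2\check{L}_d(-\xi e+e\xi)=0$, as in the proof of Theorem \ref{th:4_points-general}.
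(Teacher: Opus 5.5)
Your proof is correct and follows essentially the same route as the proof the survey relies on; the survey itself gives no proof and defers to Lemma 10.4 of the 2010 reference. That route writes $\breve{L}_\mu(\pi_\mu(q_0),\tau_1)=\check{L}_d(q_0,e,\tau_1)$, which is well defined by $G_\mu$-invariance because conjugation fixes $e$, and then applies the chain rule along $(h_\mu^{q_0}(\delta\tau_0),0,\delta\tau_1)$, so the $D_2\check{L}_d$ term drops out. One small citation fix: $h_d(q,\tau)\in\Hor_{\calA_\mu}$ holds by the definition of $h_d$ as the inverse of $(\id_Q\times\pi)|_{\Hor}$, not by item (3) of Theorem \ref{thm:horizontal_lift_properties}, which only says $h_d$ is a section.
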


Since the submanifold $\calJ_\mu$ is diffeomorphic to $Q \times \{ e \} \times (Q/G_\mu)$ via $\tilde{\Phi}_{\calA_\mu}$, the relation between the infinitesimal variations $\delta q.$ and $(\delta v.,\delta \tau.)$ considered in Proposition \ref{prop:q trajectory iff vtau satisfies red var principle} becomes simpler. Explicitly, the elements $w_k$ of the Lie group $G$ are all equal to $e$, and so the terms involving the infinitesimal generators appearing in the definition of $\delta v_k$ in equation \eqref{eq:delta_vk-def} disappear. This allows us to express the action $\check{S}_d(v.,\tau.)$ entirely in terms of elements of $Q/G_\mu$.

\begin{lemma}
Let $(v.,\tau.)$ be a discrete curve on $\tilde{G} \times (Q/G)$ such that $v_k = \rho(q_k,e)$ for some $q_k \in Q$ and all $k$. Then, for all infinitesimal variations $\delta \tau.$ with vanishing endpoints and
\[
\delta v_k := T_{(q_k,e)}\rho \left( h_\mu^{q_k}(\delta \tau_k) , T_{(q_k,q_{k+1})}\calA_\mu(h_\mu^{q_k}(\delta \tau_k) , h_\mu^{q_{k+1}}(\delta \tau_{k+1})) \right)
\]
for all $k$, we have
\begin{equation*}
d\check{S}_d(v.,\tau.)(\delta v.,\delta \tau.) = d\breve{S}_d(\tau.)(\delta \tau.) + \sum_{k=1}^{N-1} \left( \breve{f}_\mu^-(\tau_k,\tau_{k+1}) + \breve{f}_\mu^+(\tau_{k-1},\tau_k) \right) (\delta \tau_k),
\end{equation*}
where
$$\breve{f}_\mu(\tau_0,\tau_1)(\delta \tau_0,\delta \tau_1) := D_2\breve{L}_\mu (q_0,e,\tau_1) \left( T_{(q_0,q_1)}\calA_\mu \left( h_\mu^{q_0}(\delta \tau_0) , h_\mu^{q_1}(\delta \tau_1) \right) \right).$$
\end{lemma}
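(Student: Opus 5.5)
The plan is to compute $d\check{S}_d(v.,\tau.)(\delta v.,\delta \tau.)$ term by term, splitting each $d\hat{L}_d(v_k,\tau_{k+1})(\delta v_k,\delta\tau_{k+1})$ into the piece that only involves $\breve{L}_\mu$ and a remainder, which will turn out to be the force term. Since $v_k=\rho(q_k,e)$, we have $w_k=e$ for every $k$, and $\hat{L}_d(v_k,\tau_{k+1})=\check{L}_d(q_k,e,\tau_{k+1})$. The given $\delta v_k$ is $T_{(q_k,e)}\rho(\delta q_k,\delta w_k)$ with $\delta q_k := h_\mu^{q_k}(\delta\tau_k)$ and $\delta w_k := T_{(q_k,q_{k+1})}\calA_\mu(\delta q_k,\delta q_{k+1})$. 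Using $\hat{L}_d\circ(\rho\times\id)=\check{L}_d$ and the product decomposition of the differential from Section \ref{subsection:preliminaries_on_product_manifolds}, I will write
\[
d\hat{L}_d(v_k,\tau_{k+1})(\delta v_k,\delta\tau_{k+1}) = D_1\check{L}_d(q_k,e,\tau_{k+1})(h_\mu^{q_k}(\delta\tau_k)) + D_2\check{L}_d(q_k,e,\tau_{k+1})(\delta w_k) + D_3\check{L}_d(q_k,e,\tau_{k+1})(\delta\tau_{k+1}).
\]

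By the preceding lemma (Lemma 10.4 of the reference), the first and third terms add up exactly to $d\breve{L}_\mu(\tau_k,\tau_{k+1})(\delta\tau_k,\delta\tau_{k+1})$. The middle term is, by definition, $\breve{f}_\mu(\tau_k,\tau_{k+1})(\delta\tau_k,\delta\tau_{k+1})$. Here I read the $D_2\breve{L}_\mu(q_0,e,\tau_1)$ in the definition of $\breve f_\mu$ as $D_2\check{L}_d(q_0,e,\tau_1)$, since that is the only object of that shape. Summing over $k=0,\ldots,N-1$ then gives
\[
d\check{S}_d(\delta v.,\delta\tau.) = d\breve{S}_d(\tau.)(\delta\tau.) + \sum_{k=0}^{N-1}\breve{f}_\mu(\tau_k,\tau_{k+1})(\delta\tau_k,\delta\tau_{k+1}).
\]

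Next, I will split $\breve{f}_\mu = \breve{f}_\mu^- + \breve{f}_\mu^+$ according to $T(Q/G_\mu\times Q/G_\mu) = T(Q/G_\mu)^-\oplus T(Q/G_\mu)^+$. Concretely, $\breve f^-_\mu(\tau_0,\tau_1)(\delta\tau_0)=\breve f_\mu(\tau_0,\tau_1)(\delta\tau_0,0)$, and $\breve f^+_\mu$ is defined analogously in the second slot; this uses the linearity of $T\calA_\mu$ and of the horizontal lift. Reindexing the sum and using $\delta\tau_0=\delta\tau_N=0$ turns it into $\sum_{k=1}^{N-1}(\breve{f}_\mu^-(\tau_k,\tau_{k+1})+\breve{f}_\mu^+(\tau_{k-1},\tau_k))(\delta\tau_k)$, which is the claimed formula.

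The main obstacle is well-definedness: the quantities must not depend on the choice of lifts $q_k\in\pi_\mu^{-1}(\tau_k)$. Equivalently, the representative $q_0$ chosen in the definition of $\breve f_\mu$ has to be consistent with the $q_k$ used in the curve. Here the $q_k$ are determined by $v_k$ only up to the common $G_\mu$ action. I would handle this with the $G_\mu$-equivariance of $h_\mu$ and the equivariance \eqref{eq:calAd_equivariance} of $\calA_\mu$. Together with the $G_\mu$-invariance of $\check{L}_d$ under $(q,w)\mapsto(l_g q, gwg^{-1})$ and the fact that $e$ is fixed by conjugation, these show that the middle term is unchanged under a simultaneous change $q_k\mapsto l^Q_g(q_k)$. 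A subtler point is that $(q_k,q_{k+1})$ must lie in $\calJ_\mu$, so that consecutive lifts are compatible; I would take this from $w_k=\calA_\mu(q_k,q_{k+1})=e$.
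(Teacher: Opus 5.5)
Your proposal is correct and is essentially the argument behind the paper's citation of Lemma 10.5 of the 2010 reference. You write $d\hat{L}_d(v_k,\tau_{k+1})(\delta v_k,\delta\tau_{k+1})$ as the three partial derivatives of $\check{L}_d$ at $(q_k,e,\tau_{k+1})$, turn the $D_1$ and $D_3$ terms into $d\breve{L}_\mu(\tau_k,\tau_{k+1})$ via the preceding lemma, recognize the $D_2$ term as $\breve{f}_\mu(\tau_k,\tau_{k+1})$, and then split and reindex using $\delta\tau_0=\delta\tau_N=0$. You are also right on two points about the statement itself: it tacitly requires $(q_k,q_{k+1})\in\calJ_\mu$ (otherwise $T_{(q_k,q_{k+1})}\calA_\mu$ does not land in $T_eG_\mu$ and $\delta v_k$ is undefined), and the $D_2\breve{L}_\mu$ in the definition of $\breve{f}_\mu$ must be read as $D_2\check{L}_d$.
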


\begin{remark}
Since $\breve{f}_\mu$ is a $1$-form on $Q/G \times Q/G$, the terms involving $\breve{f}_\mu$ in the previous lemma can be interpreted as force terms, a concept that we will discuss in detail in Section \ref{section:forced systems}.
\end{remark}

This leads to the following version of Theorem \ref{th:4_points-general}, which is Theorem 3.15 of \cite{ar:caruso_fernandez_tori_zuccalli:2026:remarks_on_structures_and_preservation_in_forced_discrete_mechanical_systems_of_Routh_type}.

\begin{theorem}\label{hor-sym-theor-mu}
Let $G_{\mu}$ be a group of $\mu$-good symmetries of a DMS $(Q,L_d)$ for some $\mu \in \frak g^{*}$. Let $q. := (q_0,\ldots,q_N)$ be a discrete curve on $Q$ with momentum $\mu$ and let $\tau_k := \pi_\mu(q_k)$ be the corresponding reduced curve on $Q/G_\mu$. Let $\calA_\mu$ be the affine discrete connection associated to the momentum map $J_\mu$ and $h_\mu$ be the horizontal lift associated to a principal connection $\frakA$ on the principal bundle $\pi_\mu : \calJ_\mu \lra \calJ_\mu/G_\mu$. Then, the following statements are equivalent:
\begin{enumerate}
\item $q.$ satisfies the variational principle $dS_d(q.)(\delta q.) = 0$ for every infinitesimal variation $\delta q.$ with fixed endpoints.

\item $q.$ satisfies the discrete Euler--Lagrange equations \eqref{dELe}.
		
\item $\tau.$ satisfies the variational principle
\begin{equation*}
d\breve{S}_\mu(\tau.)(\delta \tau.) = - \sum_{k=0}^{N-1} \breve{f}_\mu(\tau_k,\tau_{k+1}) (\delta \tau_k, \delta \tau_{k+1}),
\end{equation*}
for every infinitesimal variations $\delta \tau.$ with fixed endpoints, where
$$\breve{S}_\mu(\tau.) := \sum_{k=0}^{N-1} \breve{L}_\mu(\tau_k,\tau_{k+1})$$
and
\[
\breve{f}_\mu(\tau_0,\tau_1)(\delta \tau_0,\delta \tau_1) := D_2\breve{L}_\mu (q_0,e,\tau_1) \left( T_{(q_0,q_1)}\calA_\mu \left( h_\mu^{q_0}(\delta \tau_0) , h_\mu^{q_1}(\delta \tau_1) \right) \right).
\]

\item $\tau.$ satisfies the forced discrete Euler--Lagrange equations
\begin{equation}\label{fdeEL}
D_1 \breve{L}_\mu (\tau_k,\tau_{k+1}) + D_2 \breve{L}_\mu (\tau_{k-1},\tau_k) + \breve{f}_\mu^+(\tau_{k-1},\tau_k) + \breve{f}_\mu^-(\tau_k,\tau_{k+1}) = 0,
\end{equation}
for $k = 1,\ldots,N-1$.
\end{enumerate}
\end{theorem}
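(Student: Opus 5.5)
The plan is to obtain this theorem as a specialization of Theorem \ref{th:4_points-general} to the symmetry group $G_\mu$ and the affine discrete connection $\calA_\mu$. Since $q_\cdot$ has momentum $\mu$, the Discrete Noether's Theorem places every pair $(q_k,q_{k+1})$ in $\calJ_\mu=\Hor_{\calA_\mu}$. Hence $w_k=\calA_\mu(q_k,q_{k+1})=e$ for all $k$, by Theorem \ref{thm:calAd_equivariance}, and $v_k=\rho_\mu(q_k,e)$. Strictly, Noether applies only to trajectories, so for the directions $(3)\Rightarrow(1)$ and $(4)\Rightarrow(1)$ I will read the hypothesis ``with momentum $\mu$'' as $J_d(q_k,q_{k+1})=\mu$ for all $k$. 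The equivalence $(1)\Leftrightarrow(2)$ is Theorem \ref{DEE-L}.

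For $(1)\Leftrightarrow(3)$, I will apply Proposition \ref{prop:q trajectory iff vtau satisfies red var principle}, with $G_\mu$ in place of $G$, to reduce (1) to the reduced variational principle on $(v_\cdot,\tau_\cdot)$. The variations there split into a horizontal part, generated by $\delta\tau_\cdot$, and a vertical part, generated by $\xi_\cdot\in\frakg_\mu^{N+1}$. The horizontal part is handled by the second lemma above: $d\check S_d(v_\cdot,\tau_\cdot)(\delta v_\cdot,\delta\tau_\cdot)$ equals $d\breve S_\mu(\tau_\cdot)(\delta\tau_\cdot)$ plus the force sum. Its vanishing for all $\delta\tau_\cdot$ with fixed endpoints is exactly (3), once I rewrite the sum with $\breve f_\mu^\pm$ as $\sum_k\breve f_\mu(\tau_k,\tau_{k+1})(\delta\tau_k,\delta\tau_{k+1})$, using the endpoint conditions.

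The main obstacle is the vertical part. I must show that, along curves in $\calJ_\mu$, the $\xi$-contribution vanishes automatically, so that imposing only the horizontal conditions loses nothing. By the computation in the proof of Theorem \ref{th:4_points-general}, this contribution is $\sum_k\psi_k(\xi_k)$ with $\psi_k=D_2\check L_d(q_{k-1},e,\tau_k)-D_2\check L_d(q_k,e,\tau_{k+1})\in\frakg_\mu^*$. I will try to identify $D_2\check L_d(q_k,e,\tau_{k+1})$, restricted to $\frakg_\mu$, with $J_\mu(q_k,q_{k+1})$, or with $J_\mu$ of an adjacent pair, up to a sign and a fixed shift. The tool is the $G_\mu$-invariance relation $\check L_d(l^Q_g q,w,\tau)=\check L_d(q,l^G_{g^{-1}}w,\tau)$ together with the formula $\tilde F_1(q,w,\tau)=l^Q_w(\overline{h^q_d}(\tau))$, which gives $D_2\check L_d(q,e,\tau)(\xi)=D_2L_d(q,q_1)\,\xi_Q(q_1)$. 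Equivariance of $L_d$ then converts this into $-D_1L_d(q,q_1)\xi_Q(q)$ plus the derivative of $L_d$ along $\xi_Q$ at both points, i.e. $J_\mu(q,q_1)(\xi)$. Constant momentum $\mu$ along the curve then gives $\psi_k=0$. If this identification needs a sign correction, I will verify it against the first lemma's formula for $d\breve L_\mu$.

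Finally, $(3)\Leftrightarrow(4)$ is the standard discrete summation by parts, as in Theorem \ref{DEE-L}. I expand $d\breve S_\mu(\tau_\cdot)(\delta\tau_\cdot)$ into $D_1$ and $D_2$ terms, split $\breve f_\mu=\breve f_\mu^-+\breve f_\mu^+$ along $T^*(Q/G_\mu\times Q/G_\mu)=(T(Q/G_\mu)^-)^*\oplus(T(Q/G_\mu)^+)^*$, and regroup by $\delta\tau_k$ for $k=1,\ldots,N-1$. Since the $\delta\tau_k$ are free, this yields \eqref{fdeEL}.
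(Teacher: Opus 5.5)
Your proposal is correct and follows the paper's route. You specialize Theorem \ref{th:4_points-general} to $G_\mu$ and $\calA_\mu$, use $w_k=e$ along $\calJ_\mu$, rewrite the horizontal part via the second lemma, and finish with discrete summation by parts. The paper only says the $\xi$-terms drop out once $w_k=e$; you supply the actual reason, namely that their contribution is $\sum_k\psi_k(\xi_k)$ with $\psi_k|_{\frakg_\mu}=J_\mu(q_{k-1},q_k)-J_\mu(q_k,q_{k+1})=0$ because the curve stays in $\calJ_\mu$. This also shows why your reading of ``momentum $\mu$'' as holding for every pair $(q_k,q_{k+1})$ is the one needed for the reverse implications.
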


\section{Reduction of symmetric forced discrete mechanical systems}\label{section:forced systems}

In this section, inspired by the presence of forces in the equations of motion of the dynamical system obtained after a process of discrete Routh reduction (see Theorem \ref{hor-sym-theor-mu}), we turn our attention to discrete mechanical systems with external forces. We first give the necessary definitions and then study their symmetry reduction process, as developed in \cite{ar:caruso_fernandez_tori_zuccalli:2023:lagrangian_reduction_of_forced_discrete_mechanical_systems}.

\subsection{Forced discrete mechanical systems and their dynamics}

\begin{definition}
A \emph{forced discrete mechanical system} (FDMS) is a triple $(Q,L_d,f_d)$ where $Q$ and $L_d$ are as in Definition \ref{def:dms}, and $f_d$ is a $1$-form on $Q \times Q$, the \emph{discrete force}.
\end{definition}

Using the identification we mentioned previously (see Section \ref{subsection:preliminaries_on_product_manifolds}), the force $f_d \in \Gamma(Q \times Q, T^{*}(Q\times Q))$ can be decomposed as $f_{d}^{-} \oplus f_{d}^{+}$, with $f_{d}^{-} \in \Gamma(Q \times Q, \pr_{1}^{*} (T^{*}Q))$ and
$f_{d}^{+} \in \Gamma(Q \times Q, \pr_{2}^{*} (T^{*}Q))$. Then,
$f_d^- (q_0,q_1) \in T_{q_0}^*Q$, $f_d^+ (q_0,q_1) \in T_{q_1}^*Q$
and
$$f_d(q_0,q_1)(\delta q_0,\delta q_1) = f_d^- (q_0,q_1)(\delta q_0) + f_d^+ (q_0,q_1)(\delta q_1).$$

The \emph{discrete action} of a FDMS $(Q,L_d,f_d)$ is defined as in the case of a DMS, while its dynamics is given by an appropriate modification of the discrete Hamilton principle, known as discrete Lagrange--d'Alembert principle.

\begin{definition}
A discrete curve $q.$ is a \emph{trajectory} of $(Q,L_d,f_d)$ if it satisfies
\begin{equation}\label{eq:discrete_lagrange_dalembert_principle}
dS_d(q.)(\delta q.) + \sum_{k=0}^{N-1} f_d(q_k,q_{k+1})(\delta q_k,\delta q_{k+1}) = 0,
\end{equation}
for every infinitesimal variation $\delta q.$ of $q.$ with fixed endpoints. 
\end{definition}

The trajectories of the system can be characterized in terms of equations as follows:

\begin{theorem}\label{CVForzado}
Let $(Q,L_d,f_d)$ be a FDMS. Then, a discrete curve $q. : \{ 0,\ldots,N \} \lra Q$ is a trajectory of $(Q,L_d,f_d)$ if and only if it satisfies the following algebraic equations:
\begin{equation}\label{forcedELe}
D_2 L_d(q_{k-1},q_k) + D_1 L_d(q_k,q_{k+1}) + f_d^+(q_{k-1},q_k) + f_d^-(q_k,q_{k+1}) = 0 \in T_{q_k}^{*}Q,
\end{equation}
for all $k=1,\ldots,N-1$, called \emph{forced discrete Euler--Lagrange equations}.
\end{theorem}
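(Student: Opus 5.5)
The argument mirrors the unforced case (Theorem \ref{DEE-L}): we expand the left-hand side of the discrete Lagrange--d'Alembert principle \eqref{eq:discrete_lagrange_dalembert_principle} and regroup it by the index of the variation.

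First, by the definition of $S_d$ and the splitting of Section \ref{subsection:preliminaries_on_product_manifolds},
\[
dS_d(q.)(\delta q.) = \sum_{k=0}^{N-1} \big( D_1L_d(q_k,q_{k+1})(\delta q_k) + D_2L_d(q_k,q_{k+1})(\delta q_{k+1}) \big).
\]
Using the decomposition $f_d = f_d^-\oplus f_d^+$, we also have
\[
\sum_{k=0}^{N-1} f_d(q_k,q_{k+1})(\delta q_k,\delta q_{k+1}) = \sum_{k=0}^{N-1}\big( f_d^-(q_k,q_{k+1})(\delta q_k) + f_d^+(q_k,q_{k+1})(\delta q_{k+1})\big).
\]
We then shift the index in the terms evaluated at $\delta q_{k+1}$, which amounts to a discrete summation by parts. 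Collecting the coefficient of each $\delta q_k$, the left-hand side of \eqref{eq:discrete_lagrange_dalembert_principle} becomes
\[
\sum_{k=1}^{N-1}\big( D_2 L_d(q_{k-1},q_k) + D_1 L_d(q_k,q_{k+1}) + f_d^+(q_{k-1},q_k) + f_d^-(q_k,q_{k+1})\big)(\delta q_k)
\]
plus boundary terms involving only $\delta q_0$ and $\delta q_N$. For variations with fixed endpoints these boundary terms vanish.

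The two implications now follow. If \eqref{forcedELe} holds for $k=1,\ldots,N-1$, every summand vanishes, so $q.$ is a trajectory. Conversely, suppose $q.$ is a trajectory and fix $k\in\{1,\ldots,N-1\}$ and an arbitrary $v\in T_{q_k}Q$. Choose the variation with $\delta q_k = v$ and $\delta q_j = 0$ for $j\neq k$; it has fixed endpoints because $1\le k\le N-1$. With this choice, \eqref{eq:discrete_lagrange_dalembert_principle} reduces to the $k$-th covector, which lies in $T^*_{q_k}Q$ by Remark \ref{preservation} and the analogous base-point property of $f_d^\pm$, evaluated on $v$. Since $v$ is arbitrary, that covector is zero, which is \eqref{forcedELe}.

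There is no real obstacle here. The only care needed is the bookkeeping in the index shift, namely checking that $f_d^+(q_{k-1},q_k)$ and $f_d^-(q_k,q_{k+1})$ both lie in $T^*_{q_k}Q$ so that they can be combined.
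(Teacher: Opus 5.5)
Your proof is correct and follows essentially the same approach as the paper. The paper gives no argument of its own and defers to Marsden--West (Part III), where the forced discrete Euler--Lagrange equations are obtained exactly this way: expand $dS_d$ and the force sum, reindex the $\delta q_{k+1}$ terms, drop the boundary terms using the fixed endpoints, and use variations supported at a single interior index for the converse.
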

\begin{proof}
See \cite[Part III]{ar:marsden_west:2001:discrete_mechanics_and_variational_integrators}.
\end{proof}
 
\subsection{Forced discrete Legendre transforms and regularity}

As in the unforced case, we have two fiber derivatives for $(Q,L_d,f_d)$.

\begin{definition}
Let $(Q,L_d,f_d)$ be a FMDS. The maps $\ff^+_{f_d}L_{d}: Q \times Q \lra T^*Q$ and $\ff^-_{f_d}L_{d} : Q \times Q \lra T^*Q$ defined by
\[
\begin{split}
&\ff^+_{f_d}L_{d}(q_0,q_1) := D_2 L_d(q_0,q_1) + f_d^+(q_0,q_1), \\
& \ff^-_{f_d}L_{d}(q_0,q_1) := -D_1 L_d(q_0,q_1) - f_d^-(q_0,q_1)
\end{split}
\]
are called \emph{forced discrete Legendre transforms}.
\end{definition}

\begin{remark}\label{preservation}
These maps, $\ff^-_{f_d}L_{d}$ and $\ff^+_{f_d}L_{d}$, preserve the base points of the fiber bundles $\pr_{i}:Q\times Q \lra Q$ and $\pi_Q : T^*Q \lra Q$, $i=1,2$, respectively. That is,
$$\ff^-_{f_d}L_{d}(q_0,q_1) \in T^{*}_{q_0}Q \ \ \mbox{and} \ \ \ff^+_{f_d}L_{d}(q_0,q_1) \in T^{*}_{q_1}Q.$$

Hence, $\ff^-_{f_d}L_{d} \in \Gamma(Q \times Q, \pr_{1}^{*} (T^{*}Q))$ and 
$\ff^+_{f_d}L_{d} \in \Gamma(Q \times Q, \pr_{2}^{*} (T^{*}Q))$.
\end{remark}

\begin{remark}
Once again, it is worth noting that the forced discrete Euler--Lagrange equations \eqref{forcedELe} can be written as $\ff^+_{f_d}L_{d}(q_{k-1},q_{k}) =\ff^-_{f_d}L_{d}(q_{k},q_{k+1})$ for all $k=1,...,N-1$.
\end{remark}

\begin{definition}
A FDMS $(Q,L_d,f_d)$ is said to be \emph{regular} if the forced discrete Legendre transforms $\ff^+_{f_d}L_{d}$ and $\ff^-_{f_d}L_{d}$ are local isomorphisms of fiber bundles or, equivalently, if they are local diffeomorphisms. That is, for every $q_0, q_1 \in Q$, the maps
$$\phi_{q_1}:Q\lra T^{*}_{q_1}Q, \quad \phi_{q_1}(q_0):=\ff^+_{f_d}L_{d}(q_0,q_1)=D_2 L_d(q_0,q_1) + f_d^+(q_0,q_1) $$
$$\phi_{q_0}:Q\lra T^{*}_{q_0}Q, \quad \phi_{q_0}(q_1):=\ff^-_{f_d}L_{d}(q_0,q_1)=-D_1 L_d(q_0,q_1) - f_d^-(q_0,q_1)$$
are local diffeomorphisms.
\end{definition}

Just like in the unforced case, the time evolution of the system $(Q,L_d,f_d)$ can be described by a function taking values in $Q\times Q$ and whose domain is an open subset of $Q\times Q$, called \emph{forced discrete flow}. This function is not always well--defined, but the regularity of the system can be used to guarantee its existence (see \cite[Theorem 2.11]{ar:caruso_fernandez_tori_zuccalli:2026:remarks_on_structures_and_preservation_in_forced_discrete_mechanical_systems_of_Routh_type}).

\begin{theorem}\label{equations}
Let $(Q,L_d,f_d)$ be a regular FDMS. Given a trajectory $(q_0,q_1,q_2)$ of $(Q,L_d,f_d)$, there exist two open subsets $U, V \subs Q \times Q$ and a diffeomorphism ${\bf F}_{L_d,f_d} : U \lra V$ such that
\begin{enumerate}
\item $(q_0,q_1) \in U, (q_1,q_2) \in V$ and ${\bf F}_{L_d,f_d}(q_0, q_1) = (q_1,q_2)$.
\item For every $(\tilde{q_0},\tilde{q_1}) \in U$, if $\tilde{q_2} := \pr_2({\bf F}_{L_d,f_d}(q_0, q_1))$, then $(\tilde{q_0}, \tilde{q_1}, \tilde{q_2})$ is a trajectory of $(Q,L_d,f_d)$.
\item Every trajectory $(q'_0, q'_1, q'_2 )$ of $(Q,L_d,f_d)$ such that $(q'_0, q'_1) \in U$ and $(q'_1, q'_2) \in V$ satisfies
$(q'_1, q'_2) = {\bf F}_{L_d,f_d}(q'_0, q'_1)$.
\end{enumerate}
\end{theorem}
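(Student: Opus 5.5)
The plan is to reduce the statement to the implicit function theorem (or, equivalently, the inverse function theorem) applied to the forced discrete Euler--Lagrange equations \eqref{forcedELe}, using the regularity of the forced Legendre transforms. By Theorem \ref{CVForzado}, a triple $(q_0,q_1,q_2)$ is a trajectory if and only if
\[
\ff^+_{f_d}L_{d}(q_0,q_1) = \ff^-_{f_d}L_{d}(q_1,q_2) \in T^*_{q_1}Q .
\]
Since $\ff^-_{f_d}L_{d}$ preserves base points (Remark \ref{preservation}), it is natural to consider the map
\[
\Theta : Q \times Q \lra T^*Q, \qquad \Theta(q_1,q_2) := \ff^-_{f_d}L_{d}(q_1,q_2),
\]
and to regard it as a morphism of fiber bundles over $Q$ (via $\pr_1$ and $\pi_Q$).

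The first step is to show that $\Theta$ is a local diffeomorphism near $(q_1,q_2)$. Regularity says that, for each fixed $q_1$, the fiber map $\phi_{q_1}(\cdot) = \Theta(q_1,\cdot) : Q \lra T^*_{q_1}Q$ is a local diffeomorphism. Since $\Theta$ covers the identity on the base, in adapted local coordinates its Jacobian is block triangular, with the identity in the base block and $T\phi_{q_1}$ in the fiber block. Hence $T\Theta$ is invertible, and the inverse function theorem provides neighborhoods $V$ of $(q_1,q_2)$ and $W$ of $\Theta(q_1,q_2)$ such that $\Theta|_V : V \lra W$ is a diffeomorphism.

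The second step defines $U := (\ff^+_{f_d}L_{d})^{-1}(W)$, intersected if necessary with a small neighborhood of $(q_0,q_1)$. This set is open and contains $(q_0,q_1)$, because $\ff^+_{f_d}L_{d}(q_0,q_1) = \Theta(q_1,q_2) \in W$. On $U$ we set
\[
{\bf F}_{L_d,f_d} := (\Theta|_V)^{-1} \circ \ff^+_{f_d}L_{d} .
\]
Because $\Theta$ and $\ff^+_{f_d}L_{d}$ both land over the same base point $q_1$, the first component of ${\bf F}_{L_d,f_d}(\tilde q_0,\tilde q_1)$ is $\tilde q_1$.

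With this definition the three items follow directly:
\begin{enumerate}
\item Item (1) holds by construction.
\item Item (2) holds because $\Theta({\bf F}_{L_d,f_d}(\tilde q_0,\tilde q_1)) = \ff^+_{f_d}L_{d}(\tilde q_0,\tilde q_1)$, which is exactly \eqref{forcedELe}. (I read the statement's $\pr_2({\bf F}_{L_d,f_d}(q_0,q_1))$ as a typo for $\pr_2({\bf F}_{L_d,f_d}(\tilde q_0,\tilde q_1))$.)
\item Item (3) holds because, if $(q_0',q_1')\in U$ and $(q_1',q_2')\in V$ form a trajectory, then $\Theta(q_1',q_2') = \ff^+_{f_d}L_{d}(q_0',q_1')$. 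Injectivity of $\Theta|_V$ then gives $(q_1',q_2') = {\bf F}_{L_d,f_d}(q_0',q_1')$.
\end{enumerate}

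The main obstacle is proving that ${\bf F}_{L_d,f_d}$ is a diffeomorphism onto $V$, rather than merely a smooth map into $V$. For this I would use the symmetric regularity of $\ff^+_{f_d}L_{d}$. The same block-triangular argument, applied over the second factor, shows that $\ff^+_{f_d}L_{d}$ is a local diffeomorphism near $(q_0,q_1)$. I would therefore shrink $U$ so that $\ff^+_{f_d}L_{d}|_U$ is a diffeomorphism onto an open subset of $W$, and then shrink $V$ and $W$ so that the two images coincide. After this shrinking, ${\bf F}_{L_d,f_d}$ is a composition of diffeomorphisms $U \lra W \lra V$. The shrinking has to be done consistently so that $(q_0,q_1)\in U$ and $(q_1,q_2)\in V$ are preserved and items (2) and (3) still hold. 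Item (3) is stable under shrinking $U$ and $V$, so this bookkeeping is routine.
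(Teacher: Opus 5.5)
Your proof is correct and complete. The paper gives no argument of its own for this statement; it only cites Theorem 2.11 of \cite{ar:caruso_fernandez_tori_zuccalli:2026:remarks_on_structures_and_preservation_in_forced_discrete_mechanical_systems_of_Routh_type}, so there is no in-text proof to compare against. Your route is the standard one: turn both forced Legendre transforms into local diffeomorphisms $Q\times Q\lra T^*Q$ using the block-triangular Jacobian and the inverse function theorem, then realize the flow as ${\bf F}_{L_d,f_d}=(\Theta|_V)^{-1}\circ\ff^+_{f_d}L_d$.

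Your final shrinking step works. To make it explicit, choose $U$ so that $\ff^+_{f_d}L_d|_U$ is a diffeomorphism onto an open $W'\subseteq W$, and replace $V$ by $(\Theta|_V)^{-1}(W')$. This keeps $(q_1,q_2)\in V$ and preserves items (1)--(3).

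Your reading of $\pr_2({\bf F}_{L_d,f_d}(q_0,q_1))$ in item (2) as a typo for $\pr_2({\bf F}_{L_d,f_d}(\tilde{q_0},\tilde{q_1}))$ is also right.
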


\subsection{Reduction of symmetric forced discrete mechanical systems}

Following the ideas found in \cite{ar:caruso_fernandez_tori_zuccalli:2023:lagrangian_reduction_of_forced_discrete_mechanical_systems}, we establish a reduction procedure for discrete mechanical systems with external forces. We begin by recalling the definition of forced discrete Lagrangian system with symmetries.

\begin{definition}\label{sym-group-def}
Let $(Q,L_d,f_d)$ be a FDMS and let $G$ be a Lie group acting on the left on $Q$ such that $\pi : Q \lra Q/G$ is a principal bundle with structure group $G$. Considering the diagonal action of $G$ on $Q \times Q$, we say that $G$ is a \emph{symmetry group} of $(Q,L_d,f_d)$ if $L_d$ is $G$-invariant and $f_d : Q \times Q \lra T^*(Q \times Q)$ is $G$-equivariant, considering the cotangent lift $l^{T^*(Q \times Q)}$ of the diagonal action on $T^*(Q \times Q)$\footnote{The $G$-equivariance of $f_d$ as a section is equivalent to its $G$-invariance as a $1$-form.}.
\end{definition}

\begin{remark}
As seen in \cite{ar:caruso_fernandez_tori_zuccalli:2023:lagrangian_reduction_of_forced_discrete_mechanical_systems}, we can define a momentum map associated to a symmetry and study its time evolution along the trajectories of the system.
\end{remark}

\subsubsection{Reduced spaces, Lagrangians and forces}

The process of symmetry reduction of a forced discrete mechanical system was developed in \cite{ar:caruso_fernandez_tori_zuccalli:2023:lagrangian_reduction_of_forced_discrete_mechanical_systems} following the general ideas ideas of reduction of discrete mechanical systems. The considered diffeomorphic model for the quotient manifold $(Q\times Q)/G$ is $\tilde{G} \times (Q/G)$ and the product reduced Lagrangian is still $\displaystyle{\hat{L}_d:\tilde{G} \times (Q/G)\lra \rr}$. In that article, the authors studied in detail how to consider a reduced version of the discrete force, using the following result.

\begin{proposition}
Let $\mathcal{A}_d$ be an affine discrete connection on $\pi : Q \lra Q/G$ and $\tilde{\frak A}$ be a principal connection on $\tilde{\pi} : Q \times Q \lra (Q \times Q)/G$. There exists an isomorphism of vector bundles over $\tilde{G} \times (Q/G)$
$$\Psi_{{\calA}_{d}}^*(T^*(Q \times Q)/G) \simeq T^*(\tilde{G} \times Q/G) \oplus \hat{\frakg}^*,$$
where $\Psi_{\calA_d} := (\Phi_{\calA_d})^{-1}$, $\hat{\frakg} := \Psi_{{\calA}_{d}}^* \tilde{\frakg}$ and $\tilde{\frakg} := ((Q \times Q \times \frakg)/G)$ is the adjoint bundle, with $G$ acting on $\frakg$ by the adjoint action.
\end{proposition}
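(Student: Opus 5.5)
The plan is to build the isomorphism in two stages, first upstairs on $Q\times Q$ and then by passing to quotients. The principal connection $\tilde{\frakA}$ on $\tilde\pi : Q\times Q\lra (Q\times Q)/G$ splits the tangent bundle $G$-equivariantly as
\[
T(Q\times Q)=\mathcal{V}^G\oplus \Hor_{\tilde\frakA},
\]
where $\mathcal{V}^G=\ker T\tilde\pi$ is the vertical bundle of the diagonal action.

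First I identify the two summands.
\begin{enumerate}
\item The infinitesimal action $(q_0,q_1,\xi)\mapsto \xi_{Q\times Q}(q_0,q_1)$ gives a $G$-equivariant isomorphism $(Q\times Q)\times\frakg\simeq \mathcal{V}^G$, with $G$ acting on $\frakg$ by $Ad$. Freeness of the action makes it injective, and dimension count makes it surjective.
\item The map $T\tilde\pi$ restricts to a $G$-invariant isomorphism $\Hor_{\tilde\frakA}\simeq \tilde\pi^*T((Q\times Q)/G)$.
\end{enumerate}
Dualizing, I obtain a $G$-equivariant decomposition
\[
T^*(Q\times Q)\simeq \tilde\pi^*T^*((Q\times Q)/G)\oplus \big((Q\times Q)\times\frakg^*\big).
\]
Concretely, a covector $\alpha$ maps to the pair
\[
\Big(\alpha\circ h_{\tilde\frakA},\ \big(\xi\mapsto\alpha(\xi_{Q\times Q})\big)\Big),
\]
where $h_{\tilde\frakA}$ is the horizontal lift. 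Equivariance holds because $\tilde\frakA$ is $Ad$-equivariant and $(Ad_g\xi)_{Q\times Q}=Tl_g\circ\xi_{Q\times Q}\circ l_{g^{-1}}$.

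Next I take quotients by $G$. All actions involved are free and proper (they cover the free action on $Q\times Q$), so each bundle quotient is a vector bundle over $(Q\times Q)/G$. This yields
\[
T^*(Q\times Q)/G\simeq T^*((Q\times Q)/G)\oplus\tilde{\frakg}^*,
\]
using the standard facts $(\tilde\pi^*E)/G\simeq E$ for a bundle $E$ downstairs and $((Q\times Q)\times\frakg^*)/G=\tilde{\frakg}^*$. Some care is needed here, since the coadjoint action used for $\frakg^*$ must be the dual of the action used for $\tilde\frakg$.

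Finally I pull back along the diffeomorphism $\Psi_{\calA_d}:\tilde G\times(Q/G)\lra(Q\times Q)/G$ from Proposition \ref{affine-isom}. Pullback commutes with direct sums, which gives
\[
\Psi_{\calA_d}^*\big(T^*(Q\times Q)/G\big)\simeq \Psi_{\calA_d}^*T^*((Q\times Q)/G)\oplus\hat\frakg^*.
\]
Since $\Psi_{\calA_d}$ is a diffeomorphism, its cotangent map $(T\Psi_{\calA_d})^*$ identifies $\Psi_{\calA_d}^*T^*((Q\times Q)/G)$ with $T^*(\tilde G\times Q/G)$.

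The main obstacle is the quotient step. I have to verify that the dual splitting really commutes with the $G$-action, so that it descends, and that the descended maps are smooth vector bundle isomorphisms. For smoothness I will work in local trivializations of $\tilde\pi$, or equivalently argue that a $G$-equivariant smooth bundle map between free proper $G$-bundles descends smoothly because the quotient maps are submersions.
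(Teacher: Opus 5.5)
Your proposal is correct and matches the paper's approach. The survey defers the proof to the 2023 article, but its Corollary formula $\hat{f}_d(v_0,\tau_1)((\delta v_0,\delta\tau_1)\oplus[(q_0,q_1),\xi])=f_d(q_0,q_1)(\delta q_0,\delta q_1)$, with $(\delta v_0,\delta\tau_1)=T_{(q_0,q_1)}\Upsilon(\delta q_0,\delta q_1)$, $\xi=\tilde{\frakA}(\delta q_0,\delta q_1)$ and $\Upsilon=\Phi_{\calA_d}\circ\tilde{\pi}$, is exactly your $\tilde{\frakA}$-splitting, dualized, descended to $(Q\times Q)/G$ and transported by $\Psi_{\calA_d}$, and your handling of the coadjoint action and of the descent step is adequate.
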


\begin{corollary}
Considering an affine discrete connection $\mathcal{A}_d$ on the principal bundle $\pi : Q \lra Q/G$ and a principal connection $\tilde{\frak A}$ on the principal bundle $\tilde{\pi} : Q \times Q \lra (Q \times Q)/G$, a $G$-equivariant section $f_d : Q \times Q \lra T^*(Q \times Q)$ induces a section $\hat{f}_d : \tilde{G} \times (Q/G) \lra T^*(\tilde{G} \times (Q/G)) \oplus \hat{\frakg}^*$. Explicitly,
$$\hat{f}_d(v_0,\tau_1)((\delta v_0,\delta \tau_1) \oplus [(q_0,q_1),\xi]) = f_d(q_0,q_1)(\delta q_0,\delta q_1),$$
for $(\delta q_0,\delta q_1) \in T(Q \times Q)$ such that $(\delta v_0,\delta \tau_1) = T_{(q_0,q_1)}\Upsilon (\delta q_0,\delta q_1)$ and $\xi = \tilde{\frak A}(\delta q_0,\delta q_1)$.
\end{corollary}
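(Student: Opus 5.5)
The plan is to realize $\hat f_d$ as the section obtained from $f_d$ by passing through the quotient and then the isomorphism of the preceding Proposition. Write $E := T^*(\tilde{G}\times Q/G)\oplus\hat{\frakg}^*$. First, the $G$-equivariance of $f_d$ (Definition \ref{sym-group-def}) means that $f_d(l^{Q\times Q}_g(q_0,q_1))\circ T l^{Q\times Q}_g = f_d(q_0,q_1)$. Hence $f_d$ descends to a well-defined section $[f_d]$ of the quotient bundle $T^*(Q\times Q)/G \to (Q\times Q)/G$, given by $[f_d]([q_0,q_1]_G) := [f_d(q_0,q_1)]_G$. Smoothness of $[f_d]$ follows because $\tilde\pi$ is a submersion and $f_d$ is smooth. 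Pulling back by the diffeomorphism $\Psi_{\calA_d}$ gives a section of $\Psi_{\calA_d}^*(T^*(Q\times Q)/G)$ over $\tilde{G}\times(Q/G)$. Composing with the vector bundle isomorphism of the preceding Proposition then yields a section $\hat f_d$ of $E$.

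The explicit formula depends on how that isomorphism acts. I would make it concrete dually, at the level of tangent vectors. For a fixed $(q_0,q_1)$ with $\Upsilon(q_0,q_1)=(v_0,\tau_1)$, the map
\[
(\delta q_0,\delta q_1)\mapsto \big(T_{(q_0,q_1)}\Upsilon(\delta q_0,\delta q_1)\big)\oplus[(q_0,q_1),\tilde{\frakA}(\delta q_0,\delta q_1)]
\]
is a linear isomorphism $T_{(q_0,q_1)}(Q\times Q)\to T_{(v_0,\tau_1)}(\tilde G\times Q/G)\oplus\hat{\frakg}_{(v_0,\tau_1)}$. This holds for three reasons. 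Both sides have dimension $2\dim Q$. The map $\Upsilon=\Phi_{\calA_d}\circ\tilde\pi$ is a submersion whose kernel is the vertical space $\ker T\tilde\pi$. Finally, $\tilde{\frakA}$ restricted to that vertical space is an isomorphism onto $\frakg$, with inverse the infinitesimal generator. So the map is injective, and therefore bijective.

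By the equivariance of $\Upsilon$, of $\tilde{\frakA}$ (via $\mathrm{Ad}$) and of the class $[(q_0,q_1),\xi]$, this identification does not depend on the representative $(q_0,q_1)$ chosen in the $G$-orbit. The dual (inverse transpose) of this map is exactly the isomorphism of the Proposition. Given that, the defining identity
\[
\hat f_d(v_0,\tau_1)\big((\delta v_0,\delta\tau_1)\oplus[(q_0,q_1),\xi]\big)=f_d(q_0,q_1)(\delta q_0,\delta q_1)
\]
simply says that $\hat f_d(v_0,\tau_1)$ is $f_d(q_0,q_1)$ transported along this isomorphism. The remaining checks are routine. For existence and uniqueness of the preimage $(\delta q_0,\delta q_1)$, use the bijectivity shown above. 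For independence of the representative, replacing $(q_0,q_1,\delta q_0,\delta q_1)$ by its image under $l_g$ leaves the left side unchanged by the equivariance just mentioned, and leaves the right side unchanged by the invariance of $f_d$. Smoothness is checked locally, using a local section of $\tilde\pi$.

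The main obstacle is confirming that the isomorphism of the previous Proposition is precisely the dual of the map $(T\Upsilon,\tilde{\frakA})$. That Proposition's statement does not give a formula, so I would either take this as its construction or re-derive it directly as above. With this direct construction the Corollary no longer needs the Proposition beyond the identification of fibers.
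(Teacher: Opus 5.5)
Your proposal is correct and follows essentially the paper's route: the paper gives no separate proof and treats the Corollary as an immediate consequence of the preceding Proposition (descend $f_d$ to a section of $T^*(Q\times Q)/G$, pull it back by $\Psi_{\calA_d}$, and transport it through that isomorphism). The isomorphism is the dual of the connection splitting $[(\delta q_0,\delta q_1)]\mapsto T\tilde{\pi}(\delta q_0,\delta q_1)\oplus[(q_0,q_1),\tilde{\frakA}(\delta q_0,\delta q_1)]$ composed with $T\Phi_{\calA_d}$, which is exactly the inverse transpose of your map $(T\Upsilon,\tilde{\frakA})$; this is also the identification that the Corollary's explicit formula fixes, so your ``main obstacle'' is resolved as you anticipated.
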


\subsubsection{Reduced dynamics}

As is customary when establishing a reduction process for a $G$-symmetry, let us consider a principal connection $\frakA$ and an affine discrete connection $\mathcal{A}_d$ on the principal bundle $\pi : Q \lra Q/G$.

From the principal connection $\frakA$, we will consider the principal connection $\tilde{\frak A}$ on $\tilde{\pi} : Q \times Q \lra (Q \times Q)/G$, induced in a natural way, given by $\tilde{\frak A} := \frac{1}{2} (\pr_1^* \frak A + \pr_2^* \frak A)$. Explicitly,
\[
\tilde{\frak A}(\delta q_0,\delta q_1) := \frac{1}{2} \left( \frak A(\delta q_0) + \frak A(\delta q_1) \right).
\]

Just like it is necessary to consider the version of the Lagrangian defined on the space $Q \times G \times (Q/G)$ to describe the dynamics of the reduced system, we will consider the following version of the discrete force:
$$\check{f}_d := \tilde{\Psi}_{{\calA}_{d}}^* f_d \in \Omega^{1}(Q \times G \times Q/G),$$
which can decomposed as
\begin{equation}\label{fd check}
\begin{split}
\check{f}_d(q_0,w_0,\tau_1)(\delta q_0,\delta w_0,\delta \tau_1) &= \check{f}_d^1(q_0,w_0,\tau_1)(\delta q_0) + \check{f}_d^2(q_0,w_0,\tau_1)(\delta w_0) \\
& \ \ \ \ \ + \check{f}_d^3(q_0,w_0,\tau_1)(\delta \tau_1)
\end{split}
\end{equation}
for all $(\delta q_0,\delta w_0,\delta \tau_1) \in T_{(q_0,w_0,\tau_1)}(Q \times G \times Q/G)$.

\begin{theorem}\label{teor-red-forzada}
Let $q. = (q_0,\ldots,q_N)$ be a discrete curve in $Q$ and let
\begin{equation*}
\begin{split}
\tau_k &:= \pi(q_k), \quad 1 \le k \le N, \\
w_k &:= \mathcal{A}_d(q_k,q_{k+1}), \quad v_k := \rho(q_k,w_k), \quad 0 \le k \le N-1
\end{split}
\end{equation*}
be the corresponding discrete curves in $Q/G$, $G$ and $\tilde{G}$. Then, given a forced discrete mechanical system $(Q,L_d,f_d)$ with symmetry group $G$, the following statements are equivalent.
\begin{enumerate}
\item $q.$ satisfies the variational principle \eqref{eq:discrete_lagrange_dalembert_principle}.

\item $q.$ satisfies the forced discrete Euler--Lagrange equations \eqref{forcedELe}.

\item $(v_\cdot,\tau_\cdot) = ((v_0,\tau_1),\ldots,(v_{N-1},\tau_N))$ satisfies
\begin{equation*}
\begin{split}
\delta &\left( \sum_{k=0}^{N-1} \hat{L}_d(v_k,\tau_{k+1}) \right) \\
& \ \ \ \ \ + \sum_{k=0}^{N-1} \hat{f}_d(v_k,\tau_{k+1}) \left( (\delta v_k,\delta \tau_{k+1}) \oplus \left[ (q_k,q_{k+1}) , \frac{1}{2}(\xi_k + \xi_{k+1}) \right] \right) = 0
\end{split}
\end{equation*}
(3.2) for all infinitesimal variations $(\delta v_\cdot, \delta \tau_\cdot,\xi_\cdot)$ satisfying:
\begin{enumerate}
\item $\delta \tau_k \in T_{\tau_k} (Q/G)$ for $k=1,\ldots,N$,
	
\item $\xi_\cdot =(\xi_0,\ldots,\xi_N)\in \frak{g}^{N+1}$,
	
\item for $k=0,\ldots,N-1$,
\[
\begin{split}
\delta v_k &= T_{(q_k,w_k)}\rho(h^{q_k}(\delta \tau_k),T_{(q_k,q_{k+1})}\mathcal{A}_{d}(h^{q_k}(\delta
\tau_k),h^{q_{k+1}}(\delta \tau_{k+1}))) \\
& \ \ \ +
T_{(q_k,w_k)}\rho ((\xi_k)_Q(q_k),
T_{(q_k,q_{k+1})}\mathcal{A}_d ((\xi_k)_Q(q_k), (\xi_{k+1})_Q(q_{k+1})))
\end{split}
\]
where $h$ is the horizontal lift associated to $\frak A$, $\delta \tau_{0} \in T_{\pi(q_0)}(Q/G)$,
	
\item and the ``fixed endpoint conditions": 
$\delta \tau_0 = 0$ \hspace{.1pc}\footnote{Notice that $\delta {\tau}_{0}$ is just a convenient device to simplify the expression of the $\delta v_k$.}, $\delta \tau_N=0$, $\xi_0=\xi_N=0$.
\end{enumerate}

\item $(v.,\tau.)$ satisfies the following conditions for each fixed $(v_{k-1},\tau_k,v_k,\tau_{k+1})$, with $1 \le k \le N-1$.
	\end{enumerate}	
\begin{itemize}
\item $\phi_{f_d} = 0$ for $\phi_{f_d} \in T_{\tau_k}^*(Q/G)$ defined by
\begin{equation}\label{red-traj-eq1}
	\begin{split}
		\phi_{f_d} &:= D_1 \check{L}_d (q_k,w_k,\tau_{k+1}) \circ h^{q_k} + D_3 \check{L}_d (q_{k-1},w_{k-1},\tau_k) \\
		&+ D_2 \check{L}_d (q_k,w_k,\tau_{k+1}) D_1\mathcal{A}_d(q_k,q_{k+1}) \circ h^{q_k} \\
		&+ D_2 \check{L}_d (q_{k-1},w_{k-1},\tau_k) D_2\mathcal{A}_d(q_{k-1},q_k) \circ h^{q_k} \\
		&+ \check{f}_d^1(q_k,w_k,\tau_{k+1}) \circ h^{q_k} + \check{f}_d^3(q_{k-1},w_{k-1},\tau_k) \\
		&+ \check{f}_d^2(q_k,w_k,\tau_{k+1}) D_1\mathcal{A}_d(q_k,q_{k+1}) \circ h^{q_k} \\ 
		&+ \check{f}_d^2(q_{k-1},w_{k-1},\tau_k) D_2\mathcal{A}_d(q_{k-1},q_k) \circ h^{q_k}.
	\end{split}
\end{equation}

\item $\psi_{f_d}(\xi_k) = 0$ for all $\xi_k \in \frakg$, where $\psi_{f_d} \in \frakg^*$ is defined by
\begin{equation}\label{red-traj-eq2}
	\begin{split}
		\psi_{f_d}(\xi_k) &:= \left( D_2 \check{L}_d (q_{k-1},w_{k-1},\tau_k) w_{k-1}^{-1} - D_2 \check{L}_d (q_k,w_k,\tau_{k+1}) w_k^{-1} \right) (\xi_k) \\
		&+ \left( \check{f}_d^1(q_k,w_k,\tau_{k+1}) + \check{f}_d^2(q_k,w_k,\tau_{k+1}) D_1 \mathcal{A}_d(q_k,q_{k+1}) \right. \\
		&+ \left. \check{f}_d^2(q_{k-1},w_{k-1},\tau_k) D_2 \mathcal{A}_d (q_{k-1},q_k) \right) \left( (\xi_k)_Q(q_k) \right),
	\end{split}
\end{equation}
where we are using the notation of Remark \ref{remark:notation_lie_groups}.
\end{itemize}
\end{theorem}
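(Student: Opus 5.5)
The proof follows the same architecture as that of Theorem \ref{th:4_points-general}, with the force terms carried along. The equivalence of (1) and (2) is Theorem \ref{CVForzado}. The real content is in (1)$\Leftrightarrow$(3) and (3)$\Leftrightarrow$(4).

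For (1)$\Leftrightarrow$(3), I would use Lemma \ref{lemma:q trajectory iff vtau satisfies red var principle} for the Lagrangian part. For the force part, I would use the explicit description of $\hat{f}_d$ given in the Corollary. The key observation concerns a variation $\delta q_k=h^{q_k}(\delta\tau_k)+(\xi_k)_Q(q_k)$. Since $h$ is horizontal for $\frakA$, we have $\frakA(\delta q_k)=\xi_k$, and therefore
\[
\tilde{\frakA}(\delta q_k,\delta q_{k+1})=\tfrac12(\xi_k+\xi_{k+1}).
\]
Also $T\Upsilon(\delta q_k,\delta q_{k+1})=(\delta v_k,\delta\tau_{k+1})$, with $\delta v_k$ given by the formula in (3)(c), exactly as in Proposition \ref{prop:q trajectory iff vtau satisfies red var principle}. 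Hence each force summand satisfies
\[
f_d(q_k,q_{k+1})(\delta q_k,\delta q_{k+1})=\hat{f}_d(v_k,\tau_{k+1})\big((\delta v_k,\delta\tau_{k+1})\oplus[(q_k,q_{k+1}),\tfrac12(\xi_k+\xi_{k+1})]\big).
\]
The correspondence $\delta q_\cdot\leftrightarrow(\delta\tau_\cdot,\xi_\cdot)$ given by $\xi_k:=\frakA(\delta q_k)$ and $\delta\tau_k:=T\pi(\delta q_k)$ is a bijection. It respects the fixed endpoint conditions, since $\delta q_0=0$ if and only if $\delta\tau_0=0$ and $\xi_0=0$, and likewise at $k=N$. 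So the two variational principles coincide term by term, and the argument of Proposition \ref{prop:q trajectory iff vtau satisfies red var principle} goes through verbatim.

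For (3)$\Leftrightarrow$(4), I would rewrite the force sum on $Q\times G\times(Q/G)$. Since $\check f_d=\tilde\Psi_{\calA_d}^*f_d$, the force term equals $\sum_k\check f_d(q_k,w_k,\tau_{k+1})(\delta q_k,\delta w_k,\delta\tau_{k+1})$, where $\delta w_k=T\calA_d(\delta q_k,\delta q_{k+1})$. Using the decomposition \eqref{fd check}, I would write $T\calA_d=D_1\calA_d+D_2\calA_d$ and substitute $\delta q_k=h^{q_k}(\delta\tau_k)+(\xi_k)_Q(q_k)$. I would then reindex the summands exactly as in the derivation of \eqref{dShat}, collecting the coefficients of $\delta\tau_k$ and of $\xi_k$ for $1\le k\le N-1$ and using the endpoint conditions to drop the boundary terms.

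The $\delta\tau_k$-coefficients give the four added lines of \eqref{red-traj-eq1}:
\begin{itemize}
\item $\check f^1_d\circ h^{q_k}$ from step $k$,
\item $\check f^3_d$ from step $k-1$,
\item $\check f^2_d D_1\calA_d\circ h^{q_k}$ from step $k$,
\item $\check f^2_d D_2\calA_d\circ h^{q_k}$ from step $k-1$.
\end{itemize}
The $\xi_k$-coefficients give the added terms of \eqref{red-traj-eq2}. Note that $\check f^3_d$ does not contribute there, because $T\pi((\xi)_Q)=0$.

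For the Lagrangian part I would reuse the proof of Theorem \ref{th:4_points-general}, including the computation $T\calA_d((\xi_k)_Q,(\xi_{k+1})_Q)=\xi_{k+1}w_k-w_k\xi_k$, which turns the $\xi$-terms of $\check L_d$ into $D_2\check L_d w_{k-1}^{-1}-D_2\check L_d w_k^{-1}$. The forces are not assumed to satisfy any invariance identity like $D_1\check L_d((\xi)_Q)=D_2\check L_d(w\xi-\xi w)$. The force $\xi$-terms must therefore be left in the raw form stated, which is why \eqref{red-traj-eq2} keeps $\check f^1_d((\xi_k)_Q(q_k))$ explicitly. Finally, because $\delta\tau_k$ and $\xi_k$ are independent and arbitrary for interior $k$, vanishing of the total sum is equivalent to $\phi_{f_d}=0$ and $\psi_{f_d}=0$ for each $k$.

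The main obstacle is the bookkeeping in the second step. One must check that the $D_1\calA_d$ and $D_2\calA_d$ pieces of $\check f_d^2$ are attached to the correct time index after reindexing, and that the boundary contributions at $k=0$ and $k=N$ indeed vanish.
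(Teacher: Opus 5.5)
Your proposal is correct and matches the paper's approach. The paper states this theorem without proof, deferring to the 2023 article, but its proof of Theorem \ref{th:4_points-general} is exactly what you extend: Theorem \ref{CVForzado} gives (1)$\Leftrightarrow$(2); the action and force summands are matched term by term via $T\Upsilon$ and $\tilde{\frakA}(\delta q_k,\delta q_{k+1})=\frac12(\xi_k+\xi_{k+1})$ for (1)$\Leftrightarrow$(3); and expanding $\check f_d=\tilde\Psi_{\calA_d}^*f_d$ alongside \eqref{dShat} gives (3)$\Leftrightarrow$(4), including your correct explanation of why the force $\xi$-terms stay unsimplified in \eqref{red-traj-eq2}.
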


\begin{remark}
If $f_d \equiv 0$, the statement of Theorem \ref{teor-red-forzada} reduces to the unconstrained version of Theorem 5.11 of \cite{ar:fernandez_tori_zuccalli:2010:lagrangian_reduction_of_discrete_mechanical_systems}.
\end{remark}

\section{Reduction of symmetric nonholonomic discrete mechanical systems}\label{section:nonholonomic_reduction}

In this section we describe discrete mechanical systems with nonholonomic constraints (nHDMS). These systems can be considered as the well--known generalized nonholonomic systems, since the variational constraints are not deduced from the kinematical ones. In these systems, the kinematic constraints are given by a non integrable (in the sense of Frobenius) submanifold of $Q \times Q$ and the variational constraints are described by a subbundle of $TQ$.

\subsection{Nonholonomic discrete mechanical systems and their dynamics}

\begin{definition}
A \emph{nonholonomic discrete mechanical system} consists of a set of objects $\left( Q,L_{d},\mathcal{D},\mathcal{D}_{d}\right) $, where $Q$ and $L_d$ are as in Definition \ref{def:dms}, $\mathcal{D}$ is a subbundle of $TQ$ and 
$\mathcal{D}_{d}$ is a submanifold of $Q\times Q$.

The manifold $Q$ is the configuration space, the function $L_{d}$ is the discrete Lagrangian, $\mathcal{D}$ is the \emph{space of variational constraints} and $\mathcal{D}_{d}$ is the space of \emph{discrete kinematical constraints}.
\end{definition}

\begin{remark}
In the context of the previous definition, if the distribution $%
\mathcal{D}$ and the constraints submanifold $\mathcal{D}_{d}$ satisfy $\mathcal{D}=TN$ and $\mathcal{D}_{d}=N\times N$, with $N \subs Q$ a submanifold of $Q$, we have a \emph{holonomic discrete mechanical system}. That is, the constraints are given by the tangent bundle of a submanifold of the configuration space. In this case, it can be seen that the equations of motion of a discrete mechanical system with holonomic constraints are equivalent to the equations of motion of a (free) DMS defined on $N$.
\end{remark}

The \emph{discrete Lagrange--d'Alembert principle} establishes that a trajectory of the system $\left( Q,L_{d},\mathcal{D},\mathcal{D}_{d}\right) $ is a discrete curve $q.$ such that
\begin{enumerate}
\item \label{Marker_DLA_variacional} is a critical point of $S_{d}$ for every admissible variation; i.e.,
\begin{equation*}
dS_{d}\left( q.\right) \left( \delta q.\right) =0
\end{equation*}%
for every infinitesimal variation $\delta q.$ with fixed endpoints such that $\delta
q_{k}\in \mathcal{D}_{q_{k}}$ for all $k$.

\item $q.$ satisfies the kinematical constraints; i.e., $\left(
q_{k},q_{k+1}\right) \in \mathcal{D}_{d}$ for all $k$.
\end{enumerate}

Just like the discrete Hamilton principle, this one gives rise to a set of equations. We have that $q.$ is a trajectory of the nHDMS $\left(
Q,L_{d},\mathcal{D},\mathcal{D}_{d}\right) $ if and only if for all $k$%
\begin{equation}
D_{1}L_{d}\left( q_{k},q_{k+1}\right) +D_{2}L_{d}\left( q_{k-1},q_{k}\right)
\in \mathcal{D}_{q_{k}}^{\circ }\text{ and }\left( q_{k},q_{k+1}\right) \in 
\mathcal{D}_{d}\text{.}  \label{EqDLA_eq1}
\end{equation}%

Analogously, if $\mathcal{D}_{q_{k}}^{\circ }=\left\langle \omega
^{a}\left( q_{k}\right) \right\rangle $ for certain $1$-forms $\omega ^{a}$ on $Q$ and $\mathcal{D}_{d}$ is determined by local equations $%
\chi _{K}\left( q_{k},q_{k+1}\right) =0$, $q.$ is a trajectory of $\left( Q,L_{d},\mathcal{D},\mathcal{D}_{d}\right) $ if and only if for all $k$ the \emph{discrete Lagrange--d'Alembert equations}
\begin{equation}
\left\{ 
\begin{array}{l}
D_{1}L_{d}\left( q_{k},q_{k+1}\right) +D_{2}L_{d}\left( q_{k-1},q_{k}\right)
=\lambda _{a,k}\omega ^{a}\left( q_{k}\right) \medskip \\ 
\chi _{K}\left( q_{k},q_{k+1}\right) =0%
\end{array}%
\right.  \label{EqDLA_ecuaciones}
\end{equation}%
are verified, where $\lambda _{a,k}\in \rr$ are constants. Under certain conditions, it can be proven that the discrete Lagrange--d'Alembert equations generate a well--defined diffeomorphism $\mathbf{F}^{nH}_{L_{d}}:\mathcal{D}
_{d}\lra \mathcal{D}_{d}$ (see Theorem \ref{thm:nHflow_well_defined} below).

\subsection{Regularity and existence of flow}

In this section, we state a result of McLachlan and Perlmutter \cite{ar:mclachlan_perlmutter:2006:integrators_for_nonholonomic_mechanical_systems} that establishes certain regularity conditions that are equivalent to the ones mentioned by Cort\'{e}s and Mart\'{\i}nez in \cite{ar:cortes_martinez:2001:non_holonomic_integrators} and guarantee the existence and uniqueness of solution for the set of equations \eqref{EqDLA_ecuaciones}.

\begin{remark}
In the continuous setting for differential equations that determine the evolution of generalized nonholonomic mechanical systems, Balseiro and Solomin determine in \cite{ar:balseiro_solomin:2008:on_generalized_non-holonomic_systems} a condition that guarantees the existence and uniqueness of solution. This condition implies, for a certain type of constraints usually called admissible, that the dimension of the submanifold defining the kinematic constraints and the distribution $\mathcal{D}$ coincide.

Similarly, in the discrete setting, McLachlan and Perlmutter in 
\cite{ar:mclachlan_perlmutter:2006:integrators_for_nonholonomic_mechanical_systems} and Cort\'{e}s and Mart\'{\i}nez in \cite{ar:cortes_martinez:2001:non_holonomic_integrators} establish that one of the necessary hypotheses for the existence of solution of the set of equations \eqref{EqDLA_ecuaciones} is that the space of kinematic constraints have the same dimension as the space of variational constraints; i.e., $\dim \mathcal{D}_{d}=\dim \mathcal{D}$. Throughout this section, we will assume that this hypothesis holds.
\end{remark}

\begin{definition}
For each $\left( q_{0},q_{1}\right) \in Q\times Q$, let $%
\psi _{\left( q_{0},q_{1}\right) }:Q\lra \left( \mathcal{D}%
_{q_{1}}\right) ^{\ast }$ be the map defined by
\begin{equation*}
\psi _{\left( q_{0},q_{1}\right) }\left( q_{2}\right) :=\iota _{q_{1}}^{\ast
}\left( D_{1}L_{d}\left( q_{1},q_{2}\right) +D_{2}L_{d}\left(
q_{0},q_{1}\right) \right)
\end{equation*}%
where $\left( \mathcal{D}_{q_{1}}\right) ^{\ast }$ is the dual space of $%
\mathcal{D}_{q_{1}}\subs T_{q_{1}}Q$ and where $\iota _{q_{1}}^{\ast
}:T_{q_{1}}^{\ast }Q\lra \left( \mathcal{D}_{q_{1}}\right) ^{\ast }$
is the dual map of the inclusion $\iota _{q_{1}}:\mathcal{D}%
_{q_{1}}\lra T_{q_{1}}Q$.
\end{definition}

Next, we state the result that establishes the condition that guarantee the existence and uniqueness of solution for the set of equations \eqref{EqDLA_ecuaciones} (see \cite{ar:mclachlan_perlmutter:2006:integrators_for_nonholonomic_mechanical_systems}).

\begin{theorem}\label{thm:nHflow_well_defined}
Let $\left( q_{0},q_{1}\right) \in \mathcal{D}_{d}$ and let $\pr _{1}:Q\times
Q\lra Q$ be the projection onto the first factor. Assuming that $ \pr _{1}\vert _{\mathcal{D}_{d}}:\mathcal{D}_{d}\lra Q$ is a submersion, the local existence of the discrete flow $
\mathbf{F}^{nH}_{L_{d}}\left( q_{k-1},q_{k}\right) =\left( q_{k},q_{k+1}\right) $
is guaranteed whenever for each $q_{2}\in \psi _{\left(
q_{0},q_{1}\right) }^{-1}\left( 0\right) \cap \left( \left. \pr_{1}\right\vert _{\mathcal{D}_{d}}\right) ^{-1}\left( q_{1}\right) $ and each nonzero $
v_{q_{2}}\in T_{q_{2}}\left( \mathcal{D}_{d}\left( q_{1}\right) \right) $
$$\left\langle D_{2}D_{1}L_{d}\left( q_{1},q_{2}\right) \cdot
v_{q_{2}},v_{q_{1}}\right\rangle \neq 0 $$ 
for every $v_{q_{1}}\in \mathcal{D}_{q_{1}}$. When this condition holds for every $q_{1}\in Q$, the discrete Lagrange--d'Alembert equations generate a well--defined diffeomorphism $\mathbf{F}^{nH}_{L_{d}}:\mathcal{D}
_{d}\lra \mathcal{D}_{d}$.
\end{theorem}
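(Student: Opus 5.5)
The plan is to treat the discrete Lagrange--d'Alembert system \eqref{EqDLA_ecuaciones} as a fixed-point problem for the unknown $q_2$ and apply the implicit function theorem. Fix $(q_0,q_1)\in\mathcal{D}_d$. A point $q_2$ gives a step of the flow exactly when two conditions hold:
\begin{enumerate}
\item $(q_1,q_2)\in\mathcal{D}_d$, that is, $q_2$ lies in the fiber $\mathcal{D}_d(q_1):=(\pr_1|_{\mathcal{D}_d})^{-1}(q_1)$, identified with a subset of $Q$;
\item $\psi_{(q_0,q_1)}(q_2)=0$, since $\psi_{(q_0,q_1)}(q_2)=0$ in $(\mathcal{D}_{q_1})^*$ says precisely that $D_1L_d(q_1,q_2)+D_2L_d(q_0,q_1)\in\mathcal{D}^\circ_{q_1}$, which is the first line of \eqref{EqDLA_ecuaciones}.
\end{enumerate}

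Because $\pr_1|_{\mathcal{D}_d}$ is a submersion, each fiber $\mathcal{D}_d(q_1)$ is an embedded submanifold of dimension $\dim\mathcal{D}_d-\dim Q$. The standing hypothesis $\dim\mathcal{D}_d=\dim\mathcal{D}$ gives $\dim\mathcal{D}_d-\dim Q=\operatorname{rank}\mathcal{D}$. Hence the restriction
\[
\Psi_{(q_0,q_1)}:=\psi_{(q_0,q_1)}|_{\mathcal{D}_d(q_1)}:\mathcal{D}_d(q_1)\lra(\mathcal{D}_{q_1})^*
\]
is a map between manifolds of equal dimension. Its differential at a zero $q_2$ is $v_{q_2}\mapsto\iota^*_{q_1}\big(D_2D_1L_d(q_1,q_2)\cdot v_{q_2}\big)$ for $v_{q_2}\in T_{q_2}(\mathcal{D}_d(q_1))$. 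The hypothesis of the theorem says this linear map has trivial kernel. I read the hypothesis as: for each nonzero $v_{q_2}$ the functional $\langle D_2D_1L_d(q_1,q_2)v_{q_2},\cdot\rangle$ does not vanish identically on $\mathcal{D}_{q_1}$. With equal dimensions, trivial kernel makes $T_{q_2}\Psi_{(q_0,q_1)}$ an isomorphism.

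To get smooth dependence on the data, I package everything into one map. Locally trivialize the submersion $\pr_1|_{\mathcal{D}_d}$ near $(q_1,q_2)$ and trivialize $\mathcal{D}$ near $q_1$. Then define
\[
\Theta(q_0',q_1',y):=\iota^*_{q_1'}\big(D_1L_d(q_1',q_2(q_1',y))+D_2L_d(q_0',q_1')\big),
\]
where $(q_0',q_1')$ ranges over $\mathcal{D}_d$, $y$ is a fiber coordinate, and $\Theta$ takes values in a fixed vector space of dimension $\operatorname{rank}\mathcal{D}$. The partial derivative of $\Theta$ in $y$ is the invertible map found above. The implicit function theorem then produces a smooth local solution $q_2=q_2(q_0',q_1')$, unique in a neighborhood, so $\mathbf{F}^{nH}_{L_d}(q_0',q_1')=(q_1',q_2(q_0',q_1'))$ is a well-defined smooth map $\mathcal{D}_d\supseteq U\lra\mathcal{D}_d$.

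It remains to show the flow is a diffeomorphism when the condition holds at every $q_1$. I would argue symmetrically and solve backwards for $q_0$ given $(q_1,q_2)$. This needs nondegeneracy of $D_2D_1L_d(q_0,q_1)$ paired between $T_{q_0}$ of the backward fiber and $\mathcal{D}_{q_1}$. That is the step I expect to be the main obstacle, because the stated hypothesis only controls the forward direction. My first attempt is to obtain an inverse of $T\mathbf{F}^{nH}_{L_d}$ from the block structure of the linearized equations, using the equal-dimension hypothesis. If this does not give invertibility, I would settle for a local diffeomorphism onto its image by restricting to neighborhoods, which is the sense intended in \cite{ar:mclachlan_perlmutter:2006:integrators_for_nonholonomic_mechanical_systems}.
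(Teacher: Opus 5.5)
The paper gives no proof of this statement; it is quoted from McLachlan--Perlmutter. Your implicit-function argument for the first assertion is the standard one and is correct.

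\begin{itemize}
\item Reading the hypothesis as ``for some $v_{q_1}\in\mathcal{D}_{q_1}$'' is the only sensible choice, since the literal ``for every'' fails at $v_{q_1}=0$.
\item The fiber-dimension count, the fiberwise differential of $\psi_{(q_0,q_1)}$ and the map $\Theta$ together give a smooth, locally unique forward map near any seed solution $(q_0,q_1,q_2)$.
\item The existence of that seed has to be assumed, because the hypothesis holds vacuously when $\psi_{(q_0,q_1)}^{-1}(0)\cap\mathcal{D}_d(q_1)=\varnothing$.
\end{itemize}

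The gap is the final assertion. Neither of your fallbacks can close it, because it does not follow from the stated hypothesis, not even locally. Here is a counterexample.
\begin{itemize}
\item Take $Q=\rr^2$ with coordinates $(x,y)$, $L_d(q_0,q_1)=\tfrac12\|q_1-q_0\|^2$, $\mathcal{D}=\operatorname{span}\{\partial_x\}$ and $\mathcal{D}_d=\{(q_0,q_1): y_1=x_0+y_0^2\}$.
\item Then $\pr_1|_{\mathcal{D}_d}$ is a submersion and $\dim\mathcal{D}_d=3=\dim\mathcal{D}$.
\item The fibers $\mathcal{D}_d(q_1)$ are the lines $y=x_1+y_1^2$. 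Since $D_2D_1L_d=-I$, it pairs their tangent $\partial_x$ nondegenerately with $\mathcal{D}_{q_1}$, so the hypothesis holds at every point of $\mathcal{D}_d$.
\item The equations give the globally defined map $\mathbf{F}^{nH}_{L_d}(q_0,q_1)=\big(q_1,(2x_1-x_0,\,x_1+y_1^2)\big)$. It depends on $y_0$ only through $y_1$.
\item Consequently the points $((x_0,\pm y_0),q_1)$ both lie in $\mathcal{D}_d$ and have the same image. In the chart $(x_0,y_0,x_1)$ the Jacobian determinant is $2y_0$.
\end{itemize}
So the map is $2$-to-$1$ and fails to be a local diffeomorphism along $\{y_0=0\}$. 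No block-structure argument or shrinking of domains can fix this.

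What is missing is exactly the mirror condition you suspected, and your own computation shows it is also necessary. Differentiating the defining equations as in your $\Theta$ argument gives
\[
\ker T_{(q_0,q_1)}\mathbf{F}^{nH}_{L_d}=\big\{(\delta q_0,0)\in T_{(q_0,q_1)}\mathcal{D}_d : \langle D_1D_2L_d(q_0,q_1)\cdot\delta q_0,v_{q_1}\rangle=0 \ \text{for all } v_{q_1}\in\mathcal{D}_{q_1}\big\}.
\]
The vectors $(\delta q_0,0)\in T_{(q_0,q_1)}\mathcal{D}_d$ form $\ker T(\pr_2|_{\mathcal{D}_d})$. This space has dimension at least $\operatorname{rank}\mathcal{D}$, with equality if and only if $\pr_2|_{\mathcal{D}_d}$ is a submersion at $(q_0,q_1)$.

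Hence $\mathbf{F}^{nH}_{L_d}$ is a local diffeomorphism at $(q_0,q_1)$ if and only if two things hold:
\begin{itemize}
\item $\pr_2|_{\mathcal{D}_d}$ is a submersion at $(q_0,q_1)$;
\item the mixed second derivative of $L_d$ at $(q_0,q_1)$ is nondegenerate on $T_{q_0}\mathcal{D}_d^-(q_1)\times\mathcal{D}_{q_1}$, where $\mathcal{D}_d^-(q_1):=\{q\in Q:(q,q_1)\in\mathcal{D}_d\}$.
\end{itemize}
In the example this pairing equals $2y_0$, which is why it fails along $\{y_0=0\}$.

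If you add this hypothesis, your proof closes, either via an injective differential between manifolds of equal dimension or via the symmetric backward implicit function argument. It closes only as a local diffeomorphism between open subsets of $\mathcal{D}_d$, however. A diffeomorphism of all of $\mathcal{D}_d$ also needs global existence and uniqueness of solutions. The hypothesis does not provide these, so they can only be assumed, in the loose sense of the paper's footnote on flows.
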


\subsection{Symmetric nonholonomic discrete mechanical systems}

\begin{definition}
A Lie group $G$ is a \emph{symmetry group} of the system $\left( Q,L_{d},\mathcal{D},\mathcal{D}_{d}\right) $ if, in addition to $\pi :Q\lra Q/G$ being a principal bundle, $L_{d}$ and $\mathcal{D}_{d}$ are inavariant by the action $l^{Q\times Q}$ and $\mathcal{D}$ is invariant by the action $l^{TQ}$.
\end{definition}

In this section, we assume that the distribution $\mathcal{S}$ with fibers $\mathcal{S}_q := \calV^G_q \cap \calD_q$ has locally constant rank, so that it is a subbundle of $TQ$. We also assume that $G$-invariants subbundles of $TQ$, $\mathcal{W}$, $\mathcal{H}$, and $\mathcal{U}$ can be chosen in such a way that we have the following associated decomposition of the tangent bundle $TQ$,
\[
TQ = \mathcal{W} \oplus \mathcal{U} \oplus \mathcal{S} \oplus \mathcal{H},
\]
where $\calD = \mathcal{S} \oplus \mathcal{H}$ and $\calV^G = \mathcal{S} \oplus \mathcal{U}$.

This decomposition corresponds to a nonholonomic connection, with connection $1$-form $\mathcal{A}:TQ\lra \mathfrak{g}$, and associated horizontal lift $h:Q\times _{Q/G}T\left( Q/G\right) \lra TQ$.

Consider the action of $G$ on $Q\times _{Q/G}T\left( Q/G\right) $
given by $l_{g}^{Q\times _{Q/G}T\left( Q/G\right) }\left( q,w_{\pi \left( q\right)
}\right) :=\left( l_{g}^{Q}\left( q\right) ,w_{\pi \left( q\right) }\right) $.

For each $q\in Q$, the elements $\xi \in \mathfrak{g}$ satisfying $\xi
_{Q}\left( q\right) \in \mathcal{D}_{q}$ form a vector space. This vector space may be different for different values of $q$, so we consider the space $\mathfrak{g}^{\mathcal{D}}:=\left\{ \left( q,\xi \right) \in Q\times 
\mathfrak{g}:\xi _{Q}\left( q\right) \in \mathcal{D}_{q}\right\}$, which is a vector bundle over $Q$, considering the projection onto the first factor. The differential of the action $l^{Q}$ establishes an isomorphism between the fiber bundles $\mathfrak{g}^{\mathcal{D}}$ and $\mathcal{S}$, i.e., $\mathfrak{g}^{\mathcal{D}}\simeq \mathcal{S}$.

A trajectory of a nonholonomic discrete mechanical system is determined by the discrete Lagrange--d'Alembert principle. When the variational constraints are decomposed as $\mathcal{D}=\mathcal{H}\oplus 
\mathcal{S}$, it is possible to decompose the admissible variations in horizontal and vertical variations, should they arise from $\mathcal{H}$ or from $\mathcal{S}$. According to this decomposition, we can also decompose the variational principle, as stated in the following result.

\begin{proposition}
Given a discrete curve $q.$ on $Q$, the following statements are equivalent:
\begin{enumerate}
\item  
$\displaystyle{dS_{d}\left( q_{\cdot }\right) \left( \delta q_{\cdot }\right) =0}$ for every infinitesimal variation $\delta q_{\cdot }$ with fixed endpoints such that $\delta
q_{k}\in \mathcal{D}_{q_{k}}$ for all $k$.

\item 
$\displaystyle{dS_{d}\left( q_{\cdot }\right) \left( \delta q_{\cdot }^{\mathcal{H}}\right)
=0\quad \text{and}\quad dS_{d}\left( q_{\cdot }\right) \left( \delta q_{\cdot
}^{\mathcal{S}}\right) =0}$
for every pair of infinitesimal variations with fixed endpoints $\delta q_{\cdot }^{\mathcal{H}
}\in \mathcal{H}$ and $\delta q_{\cdot }^{\mathcal{S}}\in \mathcal{S} $ for all $k$.
\end{enumerate}
\end{proposition}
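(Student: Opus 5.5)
The proof rests on two facts. First, $dS_d(q_\cdot)$ is linear in the variation $\delta q_\cdot$. Second, $\mathcal{D}=\mathcal{S}\oplus\mathcal{H}$ is a pointwise direct sum of subbundles, so variations split componentwise.

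\emph{(1) $\Rightarrow$ (2).} Let $\delta q_\cdot^{\mathcal{H}}$ and $\delta q_\cdot^{\mathcal{S}}$ be infinitesimal variations with fixed endpoints and values in $\mathcal{H}$ and $\mathcal{S}$, respectively. Since $\mathcal{H}\subs\mathcal{D}$ and $\mathcal{S}\subs\mathcal{D}$, each of them is an admissible variation for (1): it takes values in $\mathcal{D}_{q_k}$ and has fixed endpoints. Applying (1) to each one gives $dS_d(q_\cdot)(\delta q_\cdot^{\mathcal{H}})=0$ and $dS_d(q_\cdot)(\delta q_\cdot^{\mathcal{S}})=0$.

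\emph{(2) $\Rightarrow$ (1).} Let $\delta q_\cdot$ have fixed endpoints with $\delta q_k\in\mathcal{D}_{q_k}$ for all $k$. Using the decomposition $\mathcal{D}_{q_k}=\mathcal{S}_{q_k}\oplus\mathcal{H}_{q_k}$, write $\delta q_k=\delta q_k^{\mathcal{S}}+\delta q_k^{\mathcal{H}}$ with $\delta q_k^{\mathcal{S}}\in\mathcal{S}_{q_k}$ and $\delta q_k^{\mathcal{H}}\in\mathcal{H}_{q_k}$. These components are uniquely determined, and they can be obtained by applying at each $q_k$ the projections of $TQ=\mathcal{W}\oplus\mathcal{U}\oplus\mathcal{S}\oplus\mathcal{H}$ onto $\mathcal{S}$ and onto $\mathcal{H}$.

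We then check that both component curves have fixed endpoints. Because $\delta q_0=0$, uniqueness of the decomposition gives $\delta q_0^{\mathcal{S}}=\delta q_0^{\mathcal{H}}=0$, and likewise at $k=N$. Hence $\delta q_\cdot^{\mathcal{S}}$ and $\delta q_\cdot^{\mathcal{H}}$ are admissible variations for (2). Finally, $dS_d(q_\cdot)=\sum_k dL_d(q_k,q_{k+1})$ is linear on $T_{q_\cdot}\mathcal{C}_d(Q)\simeq\bigoplus_k T_{q_k}Q$, so
\[
dS_d(q_\cdot)(\delta q_\cdot)=dS_d(q_\cdot)(\delta q_\cdot^{\mathcal{S}})+dS_d(q_\cdot)(\delta q_\cdot^{\mathcal{H}})=0.
\]

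The only step needing any care is that the splitting of a $\mathcal{D}$-valued variation produces two variations that still have fixed endpoints, and this follows directly from uniqueness in the direct sum. Smoothness of the components is irrelevant here, since a discrete infinitesimal variation is just a finite tuple of tangent vectors.
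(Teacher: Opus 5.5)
Your proof is correct and follows the intended argument. The paper gives no written proof here and simply presents the statement as a consequence of the splitting $\mathcal{D}=\mathcal{S}\oplus\mathcal{H}$. Your two steps are exactly that reasoning: $\mathcal{H}$- and $\mathcal{S}$-valued variations are admissible for one direction, and for the other you decompose pointwise, keep fixed endpoints by uniqueness of the decomposition, and use linearity of $dS_d(q_\cdot)$.
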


In the case of mechanical system with holonomic constraints, both in the continuous and discrete settings, the presence of symmetries gives rise to conserved quantities (the momenta). This is the statement of the well--known Noether's Theorem, in its continuous and discrete versions. In the case of systems with nonholonomic constraints, this is no longer case, essentially due to the constraint forces. However, in this context we can obtain an equation describing the evolution of the momentum map along the trajectories of the system.

\begin{definition}
Given a nHDMS $\left( Q,L_{d},\mathcal{D},\mathcal{D}_{d}\right) $ with symmetry group $G$, the \emph{nonholonomic discrete momentum map} is the map $J_{d}^{nh}:Q\times Q\lra \left( \mathfrak{g}^{%
\mathcal{D}}\right) ^{\ast }$ defined by
\begin{equation}
J_{d}^{nh}\left( q_{0},q_{1}\right) \left( q_{0},\xi \right) :=-D_{1}L_{d}\left(
q_{0},q_{1}\right) \xi _{Q}\left( q_{0}\right) \text{,}
\label{EqDefMomentoNoHolDisc}
\end{equation}%
for every $\left( q_{0},\xi \right) \in \mathfrak{g}^{\mathcal{D}}$. Given a section $\tilde{\xi}%
\in \Gamma \left( \mathfrak{g}^{\mathcal{D}}\right) $, we define the map $\left( J_{d}^{nh} \right) _{\tilde{\xi}}:Q\times Q\lra \rr$ by $\left( J_{d}^{nh} \right) _{\tilde{\xi}}\left( q_{0},q_{1}\right) :=J_{d}^{nh}\left(
q_{0},q_{1}\right) \left( q_{0},\tilde{\xi}\left( q_{0}\right) \right)$.
\end{definition}

The next result establishes the equivalence between the vertical variational principle and the evolution of the nonholonomic discrete momentum map.

\begin{theorem}
\label{TeoremaMomentoSinReduccion}
Let $q_{\cdot }$ be a discrete curve on $Q$. The following statements are equivalent:
\begin{enumerate}
\item $q_{\cdot }$ satisfies the vertical variational principle. That is, $
dS_{d}\left( q_{\cdot }\right) \left( \delta q_{\cdot }^{\mathcal{S}}\right)
=0$ for every infinitesimal variation with fixed endpoints such that $\delta q_{k}^{\mathcal{%
S}}\in \mathcal{S}_{q_{k}}$ for all $k$.

\item For every section $\tilde{\xi}\in \Gamma \left( \mathfrak{g}^{%
\mathcal{D}}\right) $,%
\begin{equation}\label{EqEvolucionMomentoNHDisc}
\begin{split}
\left( J_{d}^{nh} \right)_{\tilde{\xi}}\left( q_{k},q_{k+1}\right) - & \left(
J_{d}^{nh} \right) _{\tilde{\xi}}\left( q_{k-1},q_{k}\right) \\
& =-D_{1}L_{d}\left(
q_{k-1},q_{k}\right) \left( \tilde{\xi}\left( q_{k}\right) -\tilde{\xi}%
\left( q_{k-1}\right) \right) _{Q}\left( q_{k-1}\right) \text{.}
\end{split}
\end{equation}
\end{enumerate}
\end{theorem}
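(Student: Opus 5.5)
The plan is to exploit the fact that $\calS_q \simeq \frakg^{\calD}_q$ via the infinitesimal action, so every vertical admissible variation can be written as $\delta q_k^{\calS} = (\tilde\xi(q_k))_Q(q_k)$ for some section $\tilde\xi \in \Gamma(\frakg^{\calD})$, multiplied by scalars. Concretely, I would first show that condition \emph{(1)} is equivalent to requiring, for every $k=1,\ldots,N-1$ and every $(q_k,\xi)\in\frakg^{\calD}$,
\[
\big(D_2L_d(q_{k-1},q_k)+D_1L_d(q_k,q_{k+1})\big)\,\xi_Q(q_k)=0 .
\]
This follows by expanding $dS_d(q_\cdot)(\delta q_\cdot)=\sum_{k}\big(D_1L_d(q_k,q_{k+1})\delta q_k + D_2L_d(q_k,q_{k+1})\delta q_{k+1}\big)$ and regrouping by index, exactly as in the proof of Theorem~\ref{DEE-L}, with the fixed endpoints killing the boundary terms. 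I would then choose variations supported at a single index $k$ with value in $\calS_{q_k}$.

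The second step is to rewrite the term $D_2L_d(q_{k-1},q_k)\,\xi_Q(q_k)$ using $G$-invariance of $L_d$. Differentiating $L_d(l^Q_{\exp t\xi}q_{k-1}, l^Q_{\exp t\xi}q_k)=L_d(q_{k-1},q_k)$ at $t=0$ gives
\[
D_2L_d(q_{k-1},q_k)\,\xi_Q(q_k) = -D_1L_d(q_{k-1},q_k)\,\xi_Q(q_{k-1}).
\]
Here $\xi$ is any element of $\frakg$; it need not satisfy $\xi_Q(q_{k-1})\in\calD$, which is why only the pairing $-D_1L_d(\cdot)\xi_Q(\cdot)$ appears and not $J^{nh}_d$. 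With $\xi=\tilde\xi(q_k)$, the stationarity condition at $k$ becomes
\[
(J_d^{nh})_{\tilde\xi}(q_k,q_{k+1}) = -D_1L_d(q_{k-1},q_k)\,(\tilde\xi(q_k))_Q(q_{k-1}).
\]
Next I would subtract $(J_d^{nh})_{\tilde\xi}(q_{k-1},q_k)= -D_1L_d(q_{k-1},q_k)(\tilde\xi(q_{k-1}))_Q(q_{k-1})$ and use linearity of $\xi\mapsto\xi_Q(q_{k-1})$. This yields exactly \eqref{EqEvolucionMomentoNHDisc}.

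For the converse, I would run the same identities backwards. Given $(q_k,\xi)\in\frakg^{\calD}$, I would extend it to a global section $\tilde\xi$ of $\frakg^{\calD}$ with $\tilde\xi(q_k)=\xi$; this is possible since $\frakg^{\calD}$ is a vector bundle, using a local frame and a bump function. Applying \eqref{EqEvolucionMomentoNHDisc} to $\tilde\xi$ then recovers the pointwise stationarity condition at $k$, and hence \emph{(1)}.

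I expect the main obstacle to be technical rather than conceptual. It lies in the bundle-theoretic steps: justifying that $\frakg^{\calD}$ is a genuine vector bundle, which relies on the constant-rank assumption on $\calS$ and the isomorphism $\frakg^{\calD}\simeq\calS$, so that sections through prescribed points exist. It also lies in making sure that a general vertical variation is a sum of single-index ones, which is immediate from linearity.
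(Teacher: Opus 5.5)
The survey states this theorem without proof (it is quoted from the 2010 reference), and your argument is correct: you use single-index variations with values in $\mathcal{S}_{q_k}\simeq\mathfrak{g}^{\mathcal{D}}_{q_k}$, the infinitesimal $G$-invariance identity $D_2L_d(q_{k-1},q_k)\,\xi_Q(q_k)=-D_1L_d(q_{k-1},q_k)\,\xi_Q(q_{k-1})$ (valid for every $\xi\in\mathfrak{g}$), and, for the converse, extension of a fiber element of the vector bundle $\mathfrak{g}^{\mathcal{D}}$ to a global section. Since each step is an unconditional identity, equation \eqref{EqEvolucionMomentoNHDisc} at index $k$ for a section $\tilde\xi$ is exactly the stationarity condition at $k$ tested on $(\tilde\xi(q_k))_Q(q_k)$, so both directions go through as you describe.
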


\subsection{Reduction of symmetric nonholonomic discrete mechanical systems}

In this section, we present the result of reduction of nonholonomic discrete mechanical systems mentioned previously.

In the following theorem we present the relation between the dynamics of a symmetric nHDMS and that of a reduced dynamical system defined using a variational principle, as well as the relation between the corresponding trajectories.

\begin{theorem}
\label{TeoremaReduccion}
Let $q.$ be a discrete curve on $Q$ and let $%
\tau_{k}:=\pi \left( q_{k}\right) $, $w_{k}:=\mathcal{A}_{d}\left(
q_{k},q_{k+1}\right) $ and $v_{k}:=\rho \left( q_{k},w_{k}\right) $ be the corresponding discrete curves on $Q/G$, $G$ and $\tilde{G}$, respectively. Then, given a nHDMS $\left( Q,L_{d},\mathcal{D},\mathcal{D}%
_{d}\right) $ with symmetry group $G$, the following statements are equivalent:
\begin{enumerate}
\item $\left( q_{k},q_{k+1}\right) \in \mathcal{D}_{d}$ for all $k$ and $%
q_{\cdot }$ satisfies the variational principle $dS_{d}\left( q_{\cdot
}\right) \left( \delta q_{\cdot }\right) =0$ for every infinitesimal variation $\delta
q_{\cdot }$ with fixed endpoints such that $\delta q_{k}\in \mathcal{D}_{q_{k}}$
for all $k$.

\item $q_{\cdot }$ satisfies the discrete Lagrange--d'Alembert equations \eqref{EqDLA_eq1} for all $k$.

\item \label{Item_3_Reduccion}$\left( v_{k},\tau_{k+1}\right) \in \mathcal{\hat{%
D}}_{d}:=\Phi _{\mathcal{A}_{d}}\left( \mathcal{D}_{d}/G\right) $ for all $%
k$ y $d\hat{S}_{d}\left( v_{\cdot },\tau_{\cdot }\right) \left( \delta v_{\cdot
},\delta \tau_{\cdot }\right) =0$ for every infinitesimal variation $\left( \delta
v_{\cdot },\delta \tau_{\cdot }\right) $ with fixed endpoints such that
\[
\delta \tau_{k}\in \mathcal{\hat{D}}_{\tau_{k}} := T_{{q_{k}}}\pi \left( \mathcal{D}
_{q_{k}}\right)
\]
and
\begin{equation*}
\begin{split}
\delta v_{k} &:= T_{\left( q_{k},w_{k}\right)}\rho \left( h^{q_{k}}\left( \delta
\tau_{k}\right), T_{\left( q_{k},q_{k+1}\right)}\mathcal{A}_{d} \left(
h^{q_{k}}\left( \delta \tau_{k}\right) ,h^{q_{k+1}}\left( \delta \tau_{k+1}\right)
\right) \right) \medskip \\ 
& \ \ \ + T_{\left( q_{k},w_{k}\right)}\rho \left( \left( \xi _{k}\right) _{Q}\left(
q_{k}\right) ,T_{\left( q_{k},q_{k+1}\right)}\mathcal{A}_{d} \left( \left( \xi
_{k}\right) _{Q}\left( q_{k}\right) ,\left( \xi _{k+1}\right) _{Q}\left(
q_{k+1}\right) \right) \right)%
\end{split}
\label{EqDelta_v_k}
\end{equation*}%
where $\left( q_{k},\xi _{k}\right) \in \mathfrak{g}_{q_{k}}^{\mathcal{D}}$.

\item $\left( v_{k},\tau_{k+1}\right) \in \mathcal{\hat{D}}_{d}$ for all $k$ and $\left( v_{\cdot },\tau_{\cdot }\right) $ satisfy the following conditions for each $\left( v_{k-1},\tau_{k},v_{k},\tau_{k+1}\right)$ fixed:

\begin{itemize}
\item $\phi \in T_{\tau_{k}}^{\ast }\left( Q/G\right) $ defined by
\begin{equation*}
\begin{split}
\phi := & D_{1}\check{L}_{d}\left( q_{k},w_{k},\tau_{k+1}\right) \circ
h^{q_{k}}+D_{3}\check{L}_{d}\left( q_{k-1},w_{k-1},\tau_{k}\right) \medskip \\ 
& +\hat{f}_{d}^{-}\left( v_{k},\tau_{k+1}\right) +\hat{f}_{d}^{+}\left(
v_{k-1},\tau_{k}\right)%
\end{split}
\label{EqDefinicion_phi}
\end{equation*}%
vanishes on $\mathcal{\hat{D}}_{\tau_{k}}$, i.e., $\phi \in (\mathcal{\hat{D}}_{\tau_{k}})^\circ$, where
\[
\hat{f}_d(v_0,\tau_1)(\delta v_0,\delta \tau_1) := D_2\check{L}_d(q_0,w_0,\tau_1) T_{(q_0,q_1)}\calA_d(h^{q_0}(\delta \tau_0),h^{q_1}(\delta \tau_1)),
\]
with $v_0 = \rho(q_0,w_0)$ and $q_1 := \tilde{F}_1(q_0,w_0,\tau_1)$.

\item $\psi \in \mathfrak{g}^{\ast }$ defined by
\begin{equation*}
\psi :=D_{2}\check{L}_{d}\left( q_{k-1},w_{k-1},\tau_{k}\right)
w_{k-1}^{-1}-D_{2}\check{L}_{d}\left( q_{k},w_{k},\tau_{k+1}\right) w_{k}^{-1}
\label{EqDefinicion_psi}
\end{equation*}%
vanishes on $\mathfrak{g}_{q_{k}}^{\mathcal{D}}$, i.e., $\psi \in \left( \mathfrak{g}_{q_{k}}^{\mathcal{D}}\right) ^{\circ }$.
\end{itemize}
\end{enumerate}
\end{theorem}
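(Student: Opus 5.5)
The plan is to run the same chain of equivalences as in the proof of Theorem \ref{th:4_points-general}, now with the constraints carried along at each step. The equivalence \emph{1} $\Leftrightarrow$ \emph{2} is the standard derivation of \eqref{EqDLA_eq1} from the discrete Lagrange--d'Alembert principle. One sums by parts $dS_d(q_\cdot)(\delta q_\cdot)=\sum_{k=1}^{N-1}(D_1L_d(q_k,q_{k+1})+D_2L_d(q_{k-1},q_k))(\delta q_k)$ for fixed endpoint variations. Letting $\delta q_k$ range freely over $\calD_{q_k}$ gives the annihilator condition, and the kinematic condition appears verbatim in both statements.

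For \emph{1} $\Leftrightarrow$ \emph{3}, I would first handle the kinematic part. Since $\calD_d$ is $G$-invariant, $(q_k,q_{k+1})\in\calD_d$ if and only if $\tilde\pi(q_k,q_{k+1})\in\calD_d/G$. Applying $\Phi_{\calA_d}$ and recalling $\Upsilon=\Phi_{\calA_d}\circ\tilde\pi$, this is equivalent to $(v_k,\tau_{k+1})\in\hat\calD_d$.

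For the variational part I would reuse Lemma \ref{lemma:q trajectory iff vtau satisfies red var principle}, which gives $dS_d(q_\cdot)(\delta q_\cdot)=d\hat S_d(v_\cdot,\tau_\cdot)(\delta v_\cdot,\delta\tau_\cdot)$. It then suffices to show that the admissible $\delta q_\cdot$ correspond exactly to the admissible reduced variations. Using the nonholonomic connection and $\calD=\mathcal S\oplus\mathcal H$, every $\delta q_k\in\calD_{q_k}$ decomposes uniquely as $h^{q_k}(\delta\tau_k)+(\xi_k)_Q(q_k)$. Here $\delta\tau_k=T\pi(\delta q_k)\in\hat\calD_{\tau_k}$ and $(q_k,\xi_k)\in\frakg^{\calD}_{q_k}$, using $\frakg^{\calD}\simeq\mathcal S$ and the fact that $h$ takes values in $\mathcal H$. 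Conversely, any such pair produces a $\delta q_k\in\calD_{q_k}$. This step needs $h^{q}(\hat\calD_{\pi(q)})=\mathcal H_q$, which follows because $T\pi|_{\mathcal H}$ is injective with image $\hat\calD$ and $\hat\calD$ is well defined by $G$-invariance of $\calD$. Fixed endpoints correspond on both sides. The formula for $\delta v_k$ is then exactly $T\Upsilon(\delta q_k,\delta q_{k+1})$, as in the proof of Proposition \ref{prop:q trajectory iff vtau satisfies red var principle}.

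For \emph{3} $\Leftrightarrow$ \emph{4}, I would expand $d\hat S_d$ as in \eqref{dShat}. The $\delta\tau$ sum has the same coefficients as in Theorem \ref{th:4_points-general}, and the two terms involving $D_2\check L_d\,D_i\calA_d\circ h$ are precisely $\hat f_d^-(v_k,\tau_{k+1})+\hat f_d^+(v_{k-1},\tau_k)$ by the definition of $\hat f_d$. The $\xi$ sum is rewritten, using the equivariance \eqref{eq:calAd_equivariance} and the invariance $\check L_d(l_gq,w,\tau)=\check L_d(q,gwg^{-1}... )$ computation, into $\sum_k\psi(\xi_k)$ exactly as before.

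The main obstacle is the independence argument. The $\delta\tau_k$ range over $\hat\calD_{\tau_k}$ and the $\xi_k$ over $\frakg^{\calD}_{q_k}$, independently and separately for each $k$, with both sets being linear spaces. Vanishing of the total sum is therefore equivalent to $\phi|_{\hat\calD_{\tau_k}}=0$ and $\psi|_{\frakg^{\calD}_{q_k}}=0$ for each $1\le k\le N-1$. One has to check carefully that the $\xi$-computation in Theorem \ref{th:4_points-general}, which used an arbitrary $\xi_k\in\frakg$, still holds termwise when $\xi_k$ is restricted to the $q_k$-dependent fiber $\frakg^{\calD}_{q_k}$. I expect this to hold, since the computation is pointwise in $\xi_k$. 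Together with the kinematic condition, this gives item \emph{4}.
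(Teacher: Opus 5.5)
Your proposal is correct and follows the route the paper relies on: the survey gives no separate proof of this theorem, deferring to the argument of Theorem~\ref{th:4_points-general} and to Theorem 5.11 of \cite{ar:fernandez_tori_zuccalli:2010:lagrangian_reduction_of_discrete_mechanical_systems}, and your argument is that one. You handle the kinematic constraints through $\Upsilon=\Phi_{\calA_d}\circ\tilde{\pi}$, match admissible variations using $dS_d=d\hat{S}_d$ and the splitting $\calD=\mathcal{S}\oplus\mathcal{H}$, and then use the expansion \eqref{dShat} with termwise independence over $\hat{\calD}_{\tau_k}$ and $\frakg^{\calD}_{q_k}$. There are two small slips: the nonholonomic horizontal lift takes values in $\mathcal{H}\oplus\mathcal{W}$, not in $\mathcal{H}$ (only its restriction to $\hat{\calD}$ lands in $\mathcal{H}$, which is exactly the fact $h^{q}(\hat{\calD}_{\pi(q)})=\mathcal{H}_q$ you then use), and the invariance you need is $\check{L}_d(l_g^Q(q),w,\tau)=\check{L}_d(q,g^{-1}wg,\tau)$.
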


\section{Reduction by stages of symmetric discrete mechanical systems}\label{section:reduction by stages}

In some cases, if $G$ is a symmetry group of a mechanical system, it can be convenient to consider a partial reduction, i.e., the reduction of the system by a subgroup $H \subs G$ and, eventually, as a second step, the reduction by any remaining symmetry of the associated reduced system. This process is called \emph{reduction by (two) stages} and it is natural to wonder if it is equivalent to the reduction by the complete group $G$.

Given a Lagrangian mechanical system, it is well known that its configuration space is described using a tangent bundle. When this system presents a symmetry and a reduction process is applied, one obtains another dynamical system which, in general, is no longer a mechanical system in the Lagrangian sense, because the space in which its dynamics occurs cannot be naturally identified with a tangent bundle. Therefore, the reduction techniques that had been developed for mechanical systems cannot be applied to these reduced systems. A solution to this problem is given, for example, in \cite{ar:cendra_marsden_ratiu:2001:lagrangian_reduction_by_stages}, where they construct a class of dynamical systems and a reduction process for the systems that are symmetric and that is closed in the class; among the elements of this class are the usual mechanical systems as well as their reductions; moreover, when seen as elements of this larger class of systems, the dynamics of both the mechanical systems as well as their reductions are the same as the ones in the standard variational version. The class of systems considered in \cite{ar:cendra_marsden_ratiu:2001:lagrangian_reduction_by_stages} also include the mechanical systems with holonomic constraints and their reductions. In the case of Lagrangian systems with nonholonomic constraints, an analog category has not been defined, but the problem of writing equations of motion for the reduced system obtained after an arbitrary number of stages has been studied, as can be seen in \cite{ar:cendra_diaz:2018:lagrange-dalembert-poincare_equations_by_several_stages}.

In the case of discrete mechanical systems, we have addressed the problem of symmetry reduction by stages in two steps: first for the case of systems without constraints \cite{ar:fernandez_tori_zuccalli:2016:lagrangian_reduction_of_discrete_mechanical_systems_by_stages} and then for the case of nonholonomic systems \cite{ar:fernandez_tori_zuccalli:2020:lagrangian_reduction_of_nonholonomic_discrete_mechanical_systems_by_stages}. In both cases, we have defined categories of systems that are closed under a symmetry reduction process. Below, we will briefly recall the general ideas of the nonholonomic case and see how it allows us to consider the particular case of holonomic constraints.

In \cite{ar:fernandez_tori_zuccalli:2020:lagrangian_reduction_of_nonholonomic_discrete_mechanical_systems_by_stages}, we defined a family of systems --called \emph{discrete Lagrange--d'Alembert--Poincar\'e systems} (DLDPS)-- that naturally includes the symmetric nonholonomic systems and their reductions, in the sense of Section \ref{section:nonholonomic_reduction}.

To recall the construction of these systems, we consider a fiber bundle $\displaystyle {\phi : E \rightarrow M}$ and the manifold $C'(E):=E\times M$ as a fiber bundle over $M$ by $\phi \circ \pr_1$.

Let $\calD$ be a subbundle of $\pr_1^*(TE) \subs T(C'(E))$. A \textit{nonholonomic infinitesimal variation chaining map} (NIVCM) on $(E,\calD)$ is a homomorphism of vector bundles over $\widetilde{\pr_1}$, according to the following commutative diagram 
	\begin{equation*}
		\xymatrix{
			{\mathcal{D}} \ar[d] & \ti{\pr_{34}}^{*}(\mathcal{D}) \ar[l] \ar[d]
			\ar[r]^{\mathcal{P}} & {\ker(d\phi)} \ar[d]  \ar@{^{(}->}[r] &
			{TE} \ar[dl]\\
			{E\times M} & {C''(E)} 
			\ar[l]^{\ti{\pr_{34}}} \ar[r]_{\ti{p_1}} & {E} & {}
		}
	\end{equation*}
	where $\ti{\pr_1} ((\epsilon_0,m_1),(\epsilon_1,m_2)) := \epsilon_0$
	and
	$\ti{\pr_{34}} ((\epsilon_0,m_1),(\epsilon_1,m_2)) :=
	(\epsilon_1,m_2)$.

	A \jdef{discrete Lagrange--D'Alembert--Poincar\'e system} (DLDPS) over $E$ is a
	collection
	$\mathcal{M} := (E,L_d,\mathcal{D}_d,\mathcal{D},\mathcal{P})$ where
	$L_d:C'(E) \rightarrow \R$ is a smooth function, the \jdef{discrete
		Lagrangian}, $\mathcal{D}_d\subs C'(E)$ is a regular
	submanifold, the \jdef{kinematic constraints}, $\mathcal{D}$ is a
	subbundle of $\pr_{1}^{*}(TE)$, the \jdef{variational constraints},
	and $\mathcal{P}$ is a NIVCM over $(E,\mathcal{D})$.

Let us consider the second--order manifold $C''(E) := (E \times M) \times_{\pr_2,\phi \circ \pr_1} (E \times M)$ as a fiber bundle over $M$ with $\tilde{\pr_2} := \pr_2|_{C''(E)}$, where $\pr_2 : E \times M \times E \times M \rightarrow M$ is the canonical projection.

A discrete curve on $C'(E)$ is a set 
$(\epsilon_{.}, m_{.}) = ((\epsilon_{0}, m_1),
\ldots, (\epsilon_{N-1}, m_N))$, where $ ((\epsilon_{k}, m_{k+1}), (\epsilon_{k+1}, m_{k+2})) \in C''(E)$ for $k = 0,\ldots, N-2$.

The \jdef{discrete action} of $\mathcal{M}$ is a function
$S_d :C'(E)^{N} \rightarrow \R$ defined by
$S_d(\epsilon_\cdot,m_\cdot) := \sum_{k=0}^{N-1}
L_{d}(\epsilon_{k},m_{k+1}) $ and a \jdef{trajectory of} $\mathcal{M}$
is a discrete path $(\epsilon_\cdot,m_\cdot) \in C'(E)^{N}$ such
that $(\epsilon_{k},m_{k+1})\in \mathcal{D}_d$ for all
$k=0,\ldots,N-1$ and
\begin{equation*}
	dS_{d}(\epsilon_\cdot,m_\cdot)(\delta\epsilon_\cdot,\delta m_\cdot)=0
\end{equation*}
for all nonholonomic infinitesimal variations
$(\delta\epsilon_\cdot,\delta m_\cdot)$ on
$(\epsilon_\cdot,m_\cdot)$ with fixed endpoints (these variations are defined in terms of the chaining map $\calP$).

Both discrete nonholonomic systems and their reductions
can be seen as DLDPSs and their dynamics (in the sense of the previous paragraph) is the same as the one that they have in the sense of Section 4 (see Example 3.9 and Section 3.2 of \cite{ar:fernandez_tori_zuccalli:2020:lagrangian_reduction_of_nonholonomic_discrete_mechanical_systems_by_stages}).



A convenient notion of morphism between DLDPS is introduced and it is seen that a category $\mathfrak{LDP}_d$, whose objects are the DLDPS, can be defined. In a natural way, it can be verified that the category $\mathfrak{LP}_d$ of discrete Lagrange--Poincar\'e systems defined in \cite{ar:fernandez_tori_zuccalli:2016:lagrangian_reduction_of_discrete_mechanical_systems_by_stages} (with a completely analogous notion of morphism) is a complete subcategory of $\mathfrak{LDP}_d$.

In turn, a notion of symmetry is also considered in a natural way. Roughly speaking, a Lie group $G$ is a symmetry group of a DLDPS if it acts on the underlying fiber bundle preserving the different structures we are considering. This can be seen in detail in \cite[Definition 5.3]{ar:fernandez_tori_zuccalli:2020:lagrangian_reduction_of_nonholonomic_discrete_mechanical_systems_by_stages}. When $G$ is a symmetry group of a DLDPS $\mathcal{M}$, following the ideas of Section 5.2, one can construct a new DLDPS which we denote $\mathcal{M}/G$. We call it ``reduced system'' and it is again a DLDPS in a natural way (see \cite[Definition 5.11]{ar:fernandez_tori_zuccalli:2020:lagrangian_reduction_of_nonholonomic_discrete_mechanical_systems_by_stages}). In fact, the construction requires an additional ingredient: an affine discrete connection $\calA_d$ on a certain principal bundle. We proved the fact that the reduced systems obtained using different affine discrete connections are always isomorphic as DLDPS \cite[Proposition 5.14]{ar:fernandez_tori_zuccalli:2020:lagrangian_reduction_of_nonholonomic_discrete_mechanical_systems_by_stages}. In addition, the reduction map $\Upsilon_{\calA_d} : \mathcal{M} \rightarrow \mathcal{M}/G$ is a morphism in the category $\mathfrak{LDP}_d$, and Corollary 5.16 and Theorem 5.17 prove that the reduction map determines a bijective correspondence between the trajectories of $\mathcal{M}$ and those of $\mathcal{M}/G$.

Finally, we prove that, under certain conditions, the reduction by two stages (first by a closed and normal subgroup of the symmetry group and, then, by the residual symmetry group) produces a system which is isomorphic in $\mathfrak{LDP}_d$ to the system obtained performing a one step reduction by the complete symmetry group. In addition, we prove that there is a diffeomorphism between the trajectories of these systems.

\begin{equation*}
	\xymatrix{
		{} & {} & {\mathcal{M}} \ar[ddll]_{\Upsilon_{\mathcal{A}_d^G}} 
		\ar[dr]^{\Upsilon_{\mathcal{A}_d^H}} & {} & {}\\
		{} & {} & {} & {\mathcal{M}^H} \ar[dr]^{\Upsilon_{\mathcal{A}_d^{G/H}}} & {}\\
		{\mathcal{M}^{G}} \ar[rrrr]^{\simeq} & {} & {} & {} & {\mathcal{M}^{G/H}}}
\end{equation*}


\section*{Acknowledgments}
This document is the result of research partially supported by grants
from the Universidad Nacional de Cuyo [code 06/80020240100069UN],
Universidad Nacional de La Plata [code SX007] and CONICET.


\printbibliography

@article{ar:balseiro_solomin:2008:on_generalized_non-holonomic_systems,
	author = {P. {Balseiro} and J. {Solomin}},
	title = {On generalized	non-holonomic systems},
	journal = {Lett. Math. Phys.},
	fjournal = {Letters in Mathematical Physics},
	volume = {84},
	number = {1},
	pages = {15--30},
	year = {2008},
}

@article{ar:borda_fernandez_grillo:2013:discrete_second_order_constrained_lagrangian_systems_first_results,
  author = {Nicol{\'a}s Borda and Javier {Fern{\'a}ndez} and Sergio {Grillo}},
  title = {Discrete second order constrained {L}agrangian systems: first results},
  journal = {J. Geom. Mech.},
  fjournal = {Journal of Geometric Mechanics},
  year = {2013},
  volume = {5},
  number = {4},
  pages = {381--397},
  note = {Also, \href{http://arXiv.org/abs/1312.1941}{{\tt arXiv:1312.1941}}},
}

@article{ar:caruso_fernandez_tori_zuccalli:2023:lagrangian_reduction_of_forced_discrete_mechanical_systems,
author = {Matías I {Caruso} and Javier {Fernández} and Cora {Tori} and Marcela {Zuccalli}},
title = {Lagrangian reduction of forced discrete mechanical systems},
journal = {J. Phys. A, Math. Theor.},
fjournal = {Journal of Physics A: Mathematical and Theoretical},
doi = {10.1088/1751-8121/aceae3},
year = {2023},
publisher = {IOP Publishing},
volume = {56},
number = {35},
pages = {355202},
}

@article{ar:caruso_fernandez_tori_zuccalli:2026:remarks_on_structures_and_preservation_in_forced_discrete_mechanical_systems_of_Routh_type,
  author = {Matías I {Caruso} and Javier {Fernández} and Cora {Tori} and Marcela {Zuccalli}},
  title = {Remarks on structures and preservation in forced discrete mechanical systems of Routh type},
  journal = {J. Geom. Phys.},
  fjournal = {Journal of Geometry and Physics},
  volume = {223},
  pages = {105776},
  year = {2026},
  issn = {0393-0440},
  doi = {https://doi.org/10.1016/j.geomphys.2026.105776},
}

@article{ar:cendra_diaz:2018:lagrange-dalembert-poincare_equations_by_several_stages,
	author = {H. {Cendra} and V. {D\'iaz}},
	title = {Lagrange--D'Alembert--Poincar\`{e} Equations by Several Stages},
	journal = {J. Geom. Mech.},
	fjournal = {Journal of Geometric Mechanics},
	volume = {10},
	number = {1},
	pages = {1--41},
	year = {2018},
}

@article {ar:cendra_grillo:2006:generalized_nonholonomic_mechanics_servomechanisms_and_related_brackets,
    AUTHOR = {H. {Cendra} and S. {Grillo}},
     TITLE = {Generalized nonholonomic mechanics, servomechanisms and
              related brackets},
   JOURNAL = {J. Math. Phys.},
  FJOURNAL = {Journal of Mathematical Physics},
    VOLUME = {47},
      YEAR = {2006},
    NUMBER = {2},
     PAGES = {022902, 29},
      ISSN = {0022-2488},
     CODEN = {JMAPAQ},
   MRCLASS = {70F25 (70Q05 93B52)},
  MRNUMBER = {MR2208156 (2007a:70019)},
MRREVIEWER = {Ram Krishan Sharma},
}

@article{ar:cendra_marsden_ratiu:2001:lagrangian_reduction_by_stages,
    AUTHOR = {Hern{\'a}n {Cendra} and Jerrold E. {Marsden} and Tudor S. {Ratiu}},
     TITLE = {Lagrangian reduction by stages},
   JOURNAL = {Mem. Amer. Math. Soc.},
  FJOURNAL = {Memoirs of the American Mathematical Society},
    VOLUME = {152},
      YEAR = {2001},
    NUMBER = {722},
     PAGES = {x+108},
      ISSN = {0065-9266},
     CODEN = {MAMCAU},
   MRCLASS = {37J15 (53D20 70H33)},
  MRNUMBER = {MR1840979 (2002c:37081)},
MRREVIEWER = {Marco Castrillon Lopez},
}

@incollection {ar:cendra_marsden_ratiu:2001:geometric_mechanics_lagrangian_reduction_and_nonholonomic_systems,
    AUTHOR = {Hern{\'a}n {Cendra} and Jerrold E. {Marsden} and Tudor S. {Ratiu}},
     TITLE = {Geometric mechanics, {L}agrangian reduction, and nonholonomic
              systems},
 BOOKTITLE = {Mathematics unlimited---2001 and beyond},
     PAGES = {221--273},
 PUBLISHER = {Springer},
   ADDRESS = {Berlin},
      YEAR = {2001},
   MRCLASS = {37J15 (37J60 53D20 70F25 70H33)},
  MRNUMBER = {MR1852159 (2002g:37067)},
MRREVIEWER = {Charles-Michel Marle},
}

@article {ar:cortes_martinez:2001:non_holonomic_integrators,
    AUTHOR = {J. {Cortés} and S. {Martínez}},
     TITLE = {Non-holonomic integrators},
   JOURNAL = {Nonlinearity},
  FJOURNAL = {Nonlinearity},
    VOLUME = {14},
    NUMBER = {5},
      YEAR = {2001},
     PAGES = {1365--1392},
      ISSN = {0951-7715},
     CODEN = {NONLE5},
   MRCLASS = {37M15 (37J60 65P10 70-08 70F25)},
  MRNUMBER = {MR1862825 (2002h:37165)},
MRREVIEWER = {Antonella Zanna},
}

@article{ar:fernandez_juchani_zuccalli:2022:discrete_connections_on_principal_bundles_the_discrete_atiyah_sequence,
	author = {Javier {Fern{\'a}ndez} and Mariana {Juchani} and Marcela {Zuccalli}},
	title = {Discrete connections on principal bundles: The discrete Atiyah sequence},
	journal = {J. Geom. Phys.},
	fjournal = {Journal of Geometry and Physics},
	volume = {172},
	number = {27},
	pages = {104417},
	year = {2022},
}

@article{ar:fernandez_kordon:2025:the_integration_problem_for_principal_connections,
	author = {Javier {Fern{\'a}ndez} and Francisco {Kordon}},
	title = {The Integration Problem for principal connections},
	journal = {J. Geom. Phys.},
	fjournal = {Journal of Geometry and Physics},
	volume = {216},
	number = {20},
	pages = {105566},
	year = {2025},
}

@article {ar:fernandez_tori_zuccalli:2010:lagrangian_reduction_of_discrete_mechanical_systems,
    AUTHOR = {Javier {Fern{\'a}ndez} and Cora {Tori} and Marcela {Zuccalli}},
     TITLE = {Lagrangian reduction of nonholonomic discrete mechanical
              systems},
   JOURNAL = {J. Geom. Mech.},
  FJOURNAL = {Journal of Geometric Mechanics},
    VOLUME = {2},
      YEAR = {2010},
    NUMBER = {1},
     PAGES = {69--111},
      ISSN = {1941-4889},
   MRCLASS = {37J60 (37J15 70F25 70Hxx)},
  MRNUMBER = {2646536},
       DOI = {10.3934/jgm.2010.2.69},
       URL = {http://dx.doi.org/10.3934/jgm.2010.2.69},
      note = {Also, \href{http://arXiv.org/abs/1004.4288}{{\tt arXiv:1004.4288}}},
}

@article {ar:fernandez_tori_zuccalli:2016:lagrangian_reduction_of_discrete_mechanical_systems_by_stages,
    AUTHOR = {Javier {Fern{\'a}ndez} and Cora {Tori} and Marcela {Zuccalli}},
     TITLE = {Lagrangian reduction of discrete mechanical systems by stages},
   JOURNAL = {J. Geom. Mech.},
  FJOURNAL = {Journal of Geometric Mechanics},
    VOLUME = {8},
      YEAR = {2016},
    NUMBER = {1},
     PAGES = {35--70},
      ISSN = {1941-4889},
   MRCLASS = {Preliminary Data},
  MRNUMBER = {3485921},
       DOI = {10.3934/jgm.2016.8.35},
       URL = {http://dx.doi.org/10.3934/jgm.2016.8.35},
       note = {Also, \href{http://arXiv.org/abs/1511.06682}{{\tt
                  arXiv:1511.06682 [math.DG]}}},
}

@article {ar:fernandez_tori_zuccalli:2020:lagrangian_reduction_of_nonholonomic_discrete_mechanical_systems_by_stages,
	AUTHOR = {Javier {Fern{\'a}ndez} and Cora {Tori} and Marcela {Zuccalli}},
	TITLE = {Lagrangian reduction of nonholonomic discrete mechanical systems by stages},
	JOURNAL = {J. Geom. Mech.},
	FJOURNAL = {Journal of Geometric Mechanics},
	VOLUME = {12},
	YEAR = {2020},
	NUMBER = {4},
	PAGES = {607-639},
	ISSN = {1941-4889},
	DOI = {10.3934/jgm.2020029},
}

@article{ar:fernandez_zuccalli:2013:a_geometric_approach_to_discrete_connections_on_principal_bundles,
	author = {Javier {Fern{\'a}ndez} and Marcela {Zuccalli}},
	title = {A geometric approach to discrete connections on principal bundles},
	journal = {J. Geom. Mech.},
	fjournal = {Journal of Geometric Mechanics},
	volume = {5},
	issue = {4},
	pages = {433--444},
	year = {2013},
}

@article {ar:jalnapurkar_leok_marsden_west:2006:discrete_routh_reduction,
    AUTHOR = {Sameer M. {Jalnapurkar} and Melvin {Leok} and Jerrold
              E. {Marsden} and Matthew {West}},
     TITLE = {Discrete {R}outh reduction},
   JOURNAL = {J. Phys. A},
  FJOURNAL = {Journal of Physics. A. Mathematical and General},
    VOLUME = {39},
      YEAR = {2006},
    NUMBER = {19},
     PAGES = {5521--5544},
      ISSN = {0305-4470},
     CODEN = {JPHAC5},
   MRCLASS = {37J15 (37M15 65L06 70-08 70H33)},
  MRNUMBER = {MR2220774 (2007g:37038)},
MRREVIEWER = {Juan Carlos Marrero Gonzalez},
}

@unpublished{un:leok_marsden_weinstein:2005:a_discrete_theory_of_connections_on_principal_bundles,
  author = 	 {Melvin {Leok} and Jerrold E. {Marsden} and Alan {Weinstein}},
  title = 	 {A Discrete Theory of Connections on Principal Bundles},
  note = 	 {\href{http://arXiv.org/abs/math/0508338}{\tt arXiv:math/0508338}},
  OPTkey = 	 {},
  OPTmonth = 	 {},
  year = 	 {2005},
  OPTannote = 	 {}
}

@article {ar:marrero_martin_martinez-discrete_lagrangian_and_hamiltonian_mechanics_on_lie_groupoids-corrigendum,
    AUTHOR = {Marrero, J. C. and Mart{\'{\i}}n de Diego, D. and
              Mart{\'{\i}}nez, E.},
     TITLE = {Corrigendum: ``{D}iscrete {L}agrangian and {H}amiltonian
              mechanics on {L}ie groupoids'' [{N}onlinearity {\bf 19}
              (2006), no. 6, 1313--1348]},
   JOURNAL = {Nonlinearity},
  FJOURNAL = {Nonlinearity},
    VOLUME = {19},
      YEAR = {2006},
    NUMBER = {12},
     PAGES = {3003--3004},
      ISSN = {0951-7715},
     CODEN = {NONLE5},
   MRCLASS = {37J05 (17B66 22A22 53D20 70G45 70H30)},
  MRNUMBER = {2275510 (2007j:37085)},
       DOI = {10.1088/0951-7715/19/12/C01},
       URL = {http://dx.doi.org/10.1088/0951-7715/19/12/C01},
}

@article {ar:marrero_martin_martinez-discrete_lagrangian_and_hamiltonian_mechanics_on_lie_groupoids,
    AUTHOR = {Marrero, Juan C. and Mart{\'{\i}}n de Diego, David and
              Mart{\'{\i}}nez, Eduardo},
     TITLE = {Discrete {L}agrangian and {H}amiltonian mechanics on {L}ie
              groupoids},
   JOURNAL = {Nonlinearity},
  FJOURNAL = {Nonlinearity},
    VOLUME = {19},
      YEAR = {2006},
    NUMBER = {6},
     PAGES = {1313--1348},
      ISSN = {0951-7715},
     CODEN = {NONLE5},
   MRCLASS = {37J05 (17B66 22A22 53D20 70G45 70H30)},
  MRNUMBER = {2230001 (2007c:37068)},
MRREVIEWER = {Pawe{\l}Urbanski},
       DOI = {10.1088/0951-7715/19/6/006},
       URL = {http://dx.doi.org/10.1088/0951-7715/19/6/006},
}

@article {ar:marsden_west:2001:discrete_mechanics_and_variational_integrators,
    AUTHOR = {Jerrold E. {Marsden} and M. {West}},
     TITLE = {Discrete mechanics and variational integrators},
   JOURNAL = {Acta Numer.},
  FJOURNAL = {Acta Numerica},
    VOLUME = {10},
      YEAR = {2001},
     PAGES = {357--514},
      ISSN = {0962-4929},
   MRCLASS = {37M15 (37J05 65P10 70-08 70H05)},
  MRNUMBER = {MR2009697 (2004h:37130)},
MRREVIEWER = {Christian Lubich},
}

@article {ar:mclachlan_perlmutter:2006:integrators_for_nonholonomic_mechanical_systems,
    AUTHOR = {R. {McLachlan} and M. {Perlmutter}},
     TITLE = {Integrators for nonholonomic mechanical systems},
   JOURNAL = {J. Nonlinear Sci.},
  FJOURNAL = {Journal of Nonlinear Science},
    VOLUME = {16},
      YEAR = {2006},
    NUMBER = {4},
     PAGES = {283--328},
      ISSN = {0938-8974},
   MRCLASS = {37M15 (37J60 65P10 70-08 70F25)},
  MRNUMBER = {MR2254707 (2008d:37154)},
MRREVIEWER = {David Mart{\'{\i}}n de Diego},
}

@article {ar:rodriguezAbella_leok-discrete_dirac_reduction_of_implicit_lagrangian_systems_with_abelian_symmetry_groups,
    AUTHOR = {Rodr\'{\i}guez Abella, \'{A}lvaro and Leok, Melvin},
     TITLE = {Discrete {D}irac reduction of implicit {L}agrangian systems
              with abelian symmetry groups},
   JOURNAL = {J. Geom. Mech.},
  FJOURNAL = {Journal of Geometric Mechanics},
    VOLUME = {15},
      YEAR = {2023},
    NUMBER = {1},
     PAGES = {319--356},
      ISSN = {1941-4889,1941-4897},
   MRCLASS = {37J39 (65P10 70G65 70H33)},
  MRNUMBER = {4579810},
       DOI = {10.3934/jgm.2023013},
       URL = {https://doi.org/10.3934/jgm.2023013},
}

\end{document}